\documentclass[reqno,oneside,12pt]{amsart}
\usepackage{fancyhdr}
\usepackage{xspace}
\usepackage{geometry}
\usepackage{tikz}
\usepackage{tikz-cd}
\usepackage{aligned-overset}
\tikzset{node distance=1.5cm, auto}
\usepackage{color}
\definecolor{darkgreen}{rgb}{0,0.45,0}
\usepackage[pagebackref,colorlinks,citecolor=darkgreen,linkcolor=darkgreen]{hyperref}
\usepackage[capitalize]{cleveref}
\usepackage{stmaryrd}
\usepackage{enumerate}
\usepackage[shortlabels]{enumitem}
\usepackage{makecell}
\usepackage[justification = centering]{caption}
\usepackage{cancel}

\addtolength{\hoffset}{-1cm}
\addtolength{\textwidth}{2cm}
\addtolength{\voffset}{-1.5cm}
\addtolength{\textheight}{2.4cm}
\setlength{\marginparwidth}{1.5cm}

\NeedsTeXFormat{LaTeX2e}

\usepackage{graphicx}
\usepackage{enumerate}
\usepackage[all]{xy}
\usepackage{amsmath,amsfonts,amssymb}
\usepackage{mathtools,thmtools}
\usepackage[latin9]{inputenc}


\newcommand{\End}{{\sf End}}

\newcommand{\Mod}{{\sf Mod}}

\newcommand{\Mat}{{\sf Mat}}

\newcommand{\PMod}{{\sf PMod}}

\renewcommand{\Bbbk}{k}




\newtheorem{prop}{Proposition}[section]
\newtheorem{proposition}[prop]{Proposition}
\newtheorem{lemma}[prop]{Lemma} 
 
\newtheorem{corollary}[prop]{Corollary} 

\newtheorem{theorem}[prop]{Theorem}

\newtheorem{alphthm}{Theorem}

\newtheorem{alphconj}[alphthm]{Conjecture}

\theoremstyle{definition}

\newtheorem{definition}[prop]{Definition}

\declaretheorem[name=Example,qed={$\lozenge$},sibling=proposition]{example}

\declaretheorem[name=Remark,qed={$\triangle$},sibling=proposition]{remark}

\newtheorem{construction}[prop]{Construction}

\newcommand{\benu}{\begin{enumerate}}
\newcommand{\enu}{\end{enumerate}}

\newcommand{\beqna}{\begin{eqnarray}}
\newcommand{\eqna}{\end{eqnarray}}
\newcommand{\beqnast}{\begin{eqnarray*}}
\newcommand{\eqnast}{\end{eqnarray*}}
\newcommand{\beqn}{\begin{equation}}
\newcommand{\eqn}{\end{equation}}
\newcommand{\beqnst}{\begin{equation*}}
\newcommand{\eqnst}{\end{equation*}}

\usepackage{latexsym}

\usepackage{xspace}
\usepackage{amscd}
\usepackage{amssymb}
\usepackage{amsfonts}

\makeatother

\newcommand{\bema}{\left ( \begin{array}}
\newcommand{\ema}{\end{array} \right )}





\DeclareMathOperator{\cotensor}{\square}

\begin{document}

\title{Towards a classification of simple partial comodules of Hopf algebras}

\author[E. Batista]{Eliezer Batista}
\address{Departamento de Matem\'atica, Universidade Federal de Santa Catarina, Brazil}
\email{ebatista@mtm.ufsc.br}
\author[W. Hautekiet]{William Hautekiet}
\address{D\'epartement de Math\'ematiques, Universit\'e Libre de Bruxelles, Belgium}
\email{william.hautekiet@ulb.be}
\author[P. Saracco]{Paolo Saracco}
\address{D\'epartement de Math\'ematiques, Universit\'e Libre de Bruxelles, Belgium}
\email{paolo.saracco@ulb.be}
\author[J. Vercruysse]{Joost Vercruysse}
\address{D\'epartement de Math\'ematiques, Universit\'e Libre de Bruxelles, Belgium}
\email{joost.vercruysse@ulb.be}

\thanks{\\ {\bf 2020 Mathematics Subject Classification}: 16T05, 16T15, 16D60. \\   {\bf Key words and phrases:} partial corepresentation, partial comodule.}

\flushbottom

\begin{abstract} 
	Making the first steps towards a classification of simple partial comodules, we give a general construction for partial comodules of a Hopf algebra \(H\) using central idempotents in right coideal subalgebras and show that any \(1\)-dimensional partial comodule is of that form. We conjecture that in fact all finite-dimensional simple partial \(H\)-comodules arise this way. For \(H = kG\) for some finite group \(G\), we give conditions for the constructed partial comodule to be simple, and we determine when two of them are isomorphic.  If \(H = kG^*,\) then our construction recovers the work of M. Dokuchaev and N. Zhukavets \cite{DZ}. We also study the partial modules and comodules of the non-commutative non-cocommutative Kac-Paljutkin algebra \(\mathcal{A}\).
\end{abstract}

\maketitle

\tableofcontents

\section*{Introduction}
\thispagestyle{empty} 
The theory of partial group actions and partial group representations is a relatively young field of research, that arose from the aim to describe the structure of certain $C^*$-algebras \cite{Exel} and whose systematic study was started in \cite{DE} and \cite{DEP}. Partial actions of Hopf algebras as initiated in \cite{CJ} then inspired the notion of partial representations of Hopf algebras \cite{ABVparreps}. The theory of partial representations has been shown to be a considerable enrichment of classical representation theory. Not only can many properties of classical (or: global) representations be extended to the partial setting (for example, the category of partial representations bears a closed monoidal structure), but in addition, partial representations allow to delve deeper into the internal algebraic structure of the considered group or Hopf algebra. Indeed, already in \cite{DEP}, it has been shown that partial representations of a finite group encode information about its lattice of subgroups. In fact, partial representations of a group coincide with usual representations of a suitably constructed groupoid. A similar phenomenon has been observed in \cite{ABVparreps}, showing that partial representations of a Hopf algebra correspond to modules over a suitably constructed Hopf algebroid. 

From the point of view of Tannaka duality, in the Hopf algebraic setting, it is more natural to study partial corepresentations, first introduced in \cite{ABQVcoreps}. In contrast to the case of partial representations, it was shown in \cite{BHV} that partial comodules are in general not equivalent to a category of usual comodules over any coalgebra, which indicates that the theory of partial corepresentations might experience additional subtleties and complications. The aim of this paper is to make some first steps towards a good understanding of simple partial comodules. 

We present a general construction (see \cref{construction:general2}) for partial comodules of a Hopf algebra \(H\), which is inspired by ideas contained in  \cite{ABenvelope} and \cite{DZ}. The construction is based on what we call a \textit{subcentral} idempotent, i.\,e.\ a nonzero idempotent \(e\) such that \(ee_{(1)} \otimes e_{(2)} = e_{(1)} e \otimes e_{(2)}\). These are central in the right coideal subalgebra \(A_e\) they generate, which induces, on its turn, a quotient coalgebra \(\bar{H}_e = H/HA_e^+\) (see \cref{le:correspondance_idealcoideal}). Taking a right \(\bar{H}_e\)-comodule \(W,\) we show that the cotensor product \(W \cotensor^{\bar{H}_e} He\) is a partial \(H\)-comodule. The construction reminds of the form of finite degree partial representations of groups presented in \cite[Corollary 2.4]{DZ}.
If we take \(H = kG^*\) for some finite group \(G\), then our construction recovers the one in \cite{DZ}, which implies that every simple partial comodule of \(kG^*\) is obtained by \cref{construction:general2}.

Keeping this in mind, we make the following conjectures about this construction of partial comodules.

\begin{alphconj}
    Let \(H\) be a Hopf algebra and \(e \in H\) a subcentral idempotent such that the right coideal subalgebra it generates is finite-dimensional. If \(W\) is a simple right \(\bar{H}_e\)-comodule, then \(W \cotensor^{\bar{H}_e} He\) is a simple partial \(H\)-comodule.
\end{alphconj}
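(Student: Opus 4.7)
The plan is to establish that the assignment $F(W) := W \cotensor^{\bar{H}_e} He$ is functorial from right $\bar{H}_e$-comodules to partial $H$-comodules and sets up a bijection between $\bar{H}_e$-subcomodules of $W$ and partial $H$-subcomodules of $F(W)$. Simplicity will then transfer along the correspondence: since $\bar{H}_e$ is finite-dimensional (because $A_e$ is), the simple right $\bar{H}_e$-comodule $W$ has no nonzero proper subcomodules, hence $F(W)$ has no nonzero proper partial subcomodules.

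The first task is to analyze the bicomodule structure of $He$. Using that $e$ is central in the finite-dimensional right coideal subalgebra $A_e$, one decomposes $A_e = A_ee \oplus A_e(1-e)$ and inspects the induced left $\bar{H}_e$-coaction on $He$ coming from $\Delta$ composed with the projection $H \to \bar{H}_e$. The goal at this stage is to verify that $He$ is faithfully coflat (or at least coflat) as a left $\bar{H}_e$-comodule, so that the functor $-\cotensor^{\bar{H}_e} He$ is exact and sends inclusions of $\bar{H}_e$-comodules to inclusions of partial $H$-comodules. This shows that every $\bar{H}_e$-subcomodule $W' \subseteq W$ produces a partial $H$-subcomodule $W' \cotensor^{\bar{H}_e} He \subseteq F(W)$.

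The crucial and hardest step is the reverse direction: given a partial $H$-subcomodule $N \subseteq F(W)$, exhibit a $\bar{H}_e$-subcomodule $W' \subseteq W$ with $N = W' \cotensor^{\bar{H}_e} He$. The strategy is to build a retraction $\pi_e\colon F(W) \to W$, morally an "evaluation at $e$" of the form $\sum w_i \otimes h_ie \mapsto \sum \epsilon(h_i)w_i$ (or a corestricted variant), use the partial $H$-coaction together with centrality of $e$ in $A_e$ to verify $\bar{H}_e$-colinearity of $\pi_e$, and then check that $W' := \pi_e(N)$ is the desired subcomodule. A parallel approach is to fix a vector space decomposition $He \cong \bar{H}_e \hotimes eA_ee$ (or similar, exploiting finite-dimensionality of $A_e$) and thereby identify $F(W) \cong W \otimes eA_ee$ as vector spaces; then a partial subcomodule $N$ must be stable under the partial coaction formula produced by \cref{construction:general2}, which forces $N$ to be of the form $W' \otimes eA_ee$ for some $\bar{H}_e$-stable subspace $W' \subseteq W$.

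The main obstacle, and the reason this is stated as a conjecture rather than a theorem, lies in controlling partial subcomodules: by \cite{BHV}, partial $H$-comodules are not comodules over any coalgebra, so neither standard Jacobson density type arguments nor a direct appeal to the structure of simple comodules are available. Overcoming this requires a concrete analysis of the partial coaction on $F(W)$ provided by \cref{construction:general2}, combined with centrality of $e$ in $A_e$ and simplicity of $W$, possibly via a Clifford-style argument that restricts the partial $H$-coaction along the right coideal subalgebra $A_e$ to extract genuine $\bar{H}_e$-comodule data out of any partial subcomodule.
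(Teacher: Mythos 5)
The statement you are addressing is \cref{conj:simple}, which the paper states as a \emph{conjecture}: it is open in general, and the paper only establishes it for $H=kG$ with $G$ a finite group (\cref{th:groupcase_simple}, resting on \cref{lem:subcntsimple}), recovers it for $H=kG^*$ from the known classification in the group case, and checks it computationally for a few small Hopf algebras. Your text is accordingly a strategy outline rather than a proof, and you say so yourself in the closing paragraph: the entire difficulty is concentrated in the step you only describe as a ``goal'', namely that every partial $H$-subcomodule $N\subseteq W\cotensor^{\bar H_e}He$ has the form $W'\cotensor^{\bar H_e}He$. Neither the proposed retraction $\sum w_i\otimes h_ie\mapsto\sum\epsilon(h_i)w_i$ nor the proposed identification $He\cong\bar H_e\otimes eA_ee$ comes with an argument that a partial subcomodule is forced to be of the product form; and since, as you note, partial comodules are not comodules over any coalgebra, no general structure theory supplies this for free. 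Two smaller inaccuracies: $\bar H_e=H/HA_e^+$ need not be finite-dimensional just because $A_e$ is (take $e=1$, so $\bar H_e=H$); and simplicity of $W$ by definition already excludes nonzero proper subcomodules, so no finiteness is needed for that part of your argument.

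It is worth comparing with what the paper actually does in the one case it proves. For $H=kG$ the quotient $\overline{kG}\cong k(G/K)$ is spanned by grouplikes, so every simple $W$ is $1$-dimensional and the subcomodule correspondence you envisage is vacuous on the $W$ side; the real content is that $I=kKe$ with its (possibly shifted) partial coaction has no nonzero proper partial subcomodules. The paper proves this by a direct computation: applying $\rho$ to a nonzero element of a partial subcomodule $M$ and projecting shows that $he\in M$ for every $h$ in the support of that element, after translating one may assume $e\in M$, and then the fact that the support of $e$ generates $K$ yields all of $kKe$. This argument uses the group basis in an essential way and does not obviously generalize; indeed, for the Kac--Paljutkin algebra the paper falls back on explicit verification for the idempotents not supported on grouplikes. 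If you want to pursue your approach, the first concrete target should be the base case $W=T$ of \cref{le:partialcomodulealgebra}: prove that $A_ee$ is simple as a partial $H$-comodule for an arbitrary subcentral idempotent $e$ with $A_e$ finite-dimensional. Even that special case is not known, so as it stands your proposal identifies the right obstacle but does not remove it.
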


\begin{alphconj}
    Let \(H\) be a Hopf algebra. Then every finite-dimensional simple partial \(H\)-comodule is of the form \(W \cotensor^{\bar{H}_e} H e\) for a subcentral idempotent \(e \in H\) and simple right \(\bar{H}_e\)-comodule \(W\). 
\end{alphconj}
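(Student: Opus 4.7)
The plan is to reverse-engineer the pair $(e, W)$ from a given finite-dimensional simple partial $H$-comodule $(M, \rho)$, writing $\rho(m) = m_{[0]} \otimes m_{[1]}$. The first and crucial step is to attach to $M$ a canonical idempotent $e_M \in H$ that will play the role of the subcentral idempotent. A guiding heuristic is the global analogue: when $\rho$ is an honest coaction, its matrix coefficients span a finite-dimensional subcoalgebra of $H$ whose unit for convolution is $\varepsilon$. In the partial case, the failure of the counit axiom should be measured by an idempotent, and Schur's lemma applied to the simple partial comodule $M$ ought to force this idempotent to be independent of the auxiliary choices entering its definition.

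Once $e := e_M$ is in hand, I would verify that $e$ is subcentral. I expect this step to be essentially formal: the partial coassociativity of $\rho$, after pairing with functionals on $M$, should translate directly into the identity $e_{(1)} e \otimes e_{(2)} = e e_{(1)} \otimes e_{(2)}$. Consequently the right coideal subalgebra $A_e$ and the quotient coalgebra $\bar{H}_e$ of \cref{le:correspondance_idealcoideal} are available. I would then recover $W$ as a coinvariant-type subspace of $M$, the natural candidate being the largest subspace on which $\rho$ factors through $M \otimes He$, endowed with the right $\bar{H}_e$-comodule structure inherited from $\rho$. A Morita-flavoured argument, coupled with the simplicity of $M$, should force $W$ to be simple and nonzero.

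The final step is to produce a comparison map $\Phi : W \cotensor^{\bar{H}_e} He \to M$ of partial $H$-comodules and to show that it is an isomorphism. Granting Conjecture~A, both source and target are simple partial $H$-comodules, so Schur's lemma reduces the problem to the nonvanishing of $\Phi$, which in turn reduces to the nonvanishing of $W$ established above.

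The main obstacle I anticipate is the very first step: isolating the canonical idempotent $e_M$ attached to $M$. In the special case $H = kG^*$ treated in \cite{DZ}, the $G$-grading makes it possible to locate $e_M$ by combinatorial means, but no comparable structural tool is visible in the general Hopf setting, and it is not even \textit{a priori} clear that such an $e_M$ exists in $H$ itself rather than in some completion or enveloping object. A plausible route is to work first in the universal object classifying partial corepresentations as constructed in \cite{BHV}, locate $e_M$ there, and then descend back to $H$ by exploiting the finite-dimensionality of $M$ to guarantee that the relevant element in fact lies in $H$.
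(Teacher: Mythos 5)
This statement is \cref{conj:complete} of the paper: it is stated as an open conjecture, not a theorem, and the paper offers no proof of it in general --- only verifications in special cases (\(1\)-dimensional partial comodules in Theorem D, \(H=kG^*\) via \cref{th:kG*}, and computer checks for \(S_3\), \(D_8\), \(Q_8\) and the Kac--Paljutkin algebra). Your proposal is therefore not being measured against an existing argument, and as written it does not close the gap: it is a strategy outline whose decisive first step is missing.

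Concretely, the entire weight of the argument rests on producing the canonical subcentral idempotent \(e_M \in H\) from the simple partial comodule \(M\), and this is precisely the point the authors themselves identify as the obstruction. In their remark at the end of \cref{se:1D} they observe that for an \(n\)-dimensional partial comodule one obtains a matrix \(P = R\,S(R) \in \mathsf{Mat}_n(H)\) which is idempotent and satisfies the subcentrality identity in \(\mathsf{Mat}_n(H\otimes H)\), but that it is unclear how to extract from \(P\) a single subcentral idempotent \(e\in H\) recovering \(M\). Your appeal to Schur's lemma to show \(e_M\) is ``independent of auxiliary choices'' does not apply here: Schur's lemma governs partial-comodule endomorphisms of \(M\), whereas the candidate idempotent lives in \(\mathsf{Mat}_n(H)\), which carries no \(M\)-endomorphism structure on which simplicity of \(M\) could act. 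The \(n=1\) case works (\cref{pr:ptensorp}, \cref{cor:sinvsubcentr}) exactly because \(\mathsf{Mat}_1(H)=H\), and the \(kG^*\) case works because \(kG^*\) has a canonical basis of orthogonal idempotents allowing a ``support'' to be read off combinatorially; neither mechanism is available for a general \(H\). Finally, your last step invokes Conjecture~A (\cref{conj:simple}), which is itself open except for \(H=kG\), so even granting the construction of \(e_M\) the argument would remain conditional. The suggestion to work in the universal object of \cite{BHV} is a reasonable direction, but it is an unexecuted idea rather than a proof, and \cite{BHV} shows that partial comodules are in general \emph{not} comodules over a coalgebra, which is a warning that descent from such a universal object to an element of \(H\) itself may genuinely fail.
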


Together with \cite[Conjecture 2.8]{ABVdilations}, which conjectures that the category of partial modules of a semisimple Hopf algebra is semisimple, these propose a classification of partial comodules of a finite-dimensional cosemisimple Hopf algebra \(H\), since \(H^*\) is then semisimple. Indeed, in that case the category of partial \(H\)-comodules, being isomorphic to the category of partial \(H^*\)-modules, is expected to be semisimple, so every partial \(H\)-comodule is a direct sum of simples. These simple partial comodules can be obtained by applying the construction, if \cref{conj:complete} holds true.  
In the rest of the article, we provide further evidence for the above conjectures, looking at particular cases and concrete examples.

In contrast to the global case, even the study of \(1\)-dimensional partial structures is already highly non-trivial, as one can also judge from the recent literature on this subject \cite{FMS, MPS}.
We show in \cref{se:1D} that \cref{conj:complete} holds for \(1\)-dimensional partial comodules of Hopf algebras with invertible antipode.

\setcounter{alphthm}{3}
\begin{alphthm}
    Every \(1\)-dimensional partial comodule over a Hopf algebra \(H\) with invertible antipode is of the form \(W \cotensor^{\bar{H}_e} He\) for some subcentral idempotent \(e \in H\) and some \(1\)-dimensional right \(\bar{H}_e\)-comodule \(W\).
\end{alphthm}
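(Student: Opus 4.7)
The plan is to fix a nonzero vector $v \in V$, write the partial coaction uniquely as $\rho(v) = v \otimes e$ for a single element $e \in H$, and then establish first that $e$ is a subcentral idempotent and second that $V$ arises as $W \cotensor^{\bar{H}_e} He$ for a suitably chosen $1$-dimensional $\bar{H}_e$-comodule $W$. In other words, the whole proof is driven by the equation $\rho(v) = v \otimes e$: all partial comodule axioms for $V$ transfer to identities about $e$ alone in $H$, $H \otimes H$, and $H \otimes H \otimes H$, and conversely any such data for $e$ recovers $V$.

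For the first step, the counit axiom $(\id_V \otimes \epsilon) \rho = \id_V$ immediately forces $\epsilon(e) = 1$, so in particular $e \ne 0$. Feeding $v$ into the remaining partial comodule axioms produces two further identities in $H \otimes H$ and $H \otimes H \otimes H$ involving $\Delta(e)$ and products of copies of $e$, from which the idempotency $e^2 = e$ and the subcentral identity $e_{(1)}e \otimes e_{(2)} = e e_{(1)} \otimes e_{(2)}$ should fall out by straightforward manipulation. The invertibility of the antipode $S$ enters here, since the partial coassociativity axiom involves $S$, and one uses $S^{-1}$ to rearrange the identities into the clean symmetric form above.

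Having identified $e$ as a subcentral idempotent, \cref{le:correspondance_idealcoideal} and \cref{construction:general2} yield the right coideal subalgebra $A_e$, the quotient coalgebra $\bar{H}_e = H/HA_e^+$, the $\bar{H}_e$-bicomodule $He$, and the partial $H$-comodule $W \cotensor^{\bar{H}_e} He$ for any right $\bar{H}_e$-comodule $W$. The natural candidate is $W = k$ equipped with the $1$-dimensional $\bar{H}_e$-coaction afforded by the group-like element $\pi(1) \in \bar{H}_e$, where $\pi \colon H \to \bar{H}_e$ is the quotient. I would then define $\varphi \colon V \to W \cotensor^{\bar{H}_e} He$ by $v \mapsto 1 \otimes e$ and check three things. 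Well-definedness reduces to $\pi(e_{(1)}) \otimes e_{(2)} = \pi(1) \otimes e$, which follows because $\Delta(e) \in A_e \otimes H$ (as $A_e$ is a right coideal subalgebra containing $e$) and because $\pi$ restricted to $A_e$ factors as $\epsilon|_{A_e}$ times $\pi(1)$. Colinearity is a direct check against the partial coaction constructed on the cotensor product in \cref{construction:general2}. Injectivity is immediate from $v \ne 0$ and $\varphi(v) = 1 \otimes e$.

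The main obstacle I anticipate is the surjectivity of $\varphi$, equivalently, the fact that $W \cotensor^{\bar{H}_e} He$ is itself $1$-dimensional and spanned by $1 \otimes e$. This requires a structural description of elements $1 \otimes he$ of the cotensor product: one must show that the defining cotensor condition, combined with the subcentrality of $e$ and the idempotency $e^2 = e$, forces $he \in k \cdot e$. This is where the invertibility of $S$ is likely to be genuinely used, as it permits rewriting the cotensor equation using a right $H$-coaction on $He$ and collapsing it to $he = \epsilon(h) e$. Should this direct approach resist, the fallback is to replace the trivial $W$ by a different $1$-dimensional $\bar{H}_e$-comodule determined by a group-like of $\bar{H}_e$ coming from $e$ itself, engineered precisely so that the cotensor product is exactly $\mathrm{span}\{e\}$.
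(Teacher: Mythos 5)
There is a genuine gap at the very first step of your plan, and it propagates through the rest of the argument. If you write the coaction as $\rho(v) = v \otimes r$, the partial comodule axioms do \emph{not} force $r$ to be idempotent. What they force is $\epsilon(r)=1$ together with the von Neumann regularity relations $rS(r)r=r$ and $S(r)rS(r)=S(r)$ (see \eqref{rS(r)r}). A minimal counterexample is $H=kG$ with $\rho(1)=1\otimes g$ for a non-identity grouplike $g$: this is a perfectly good $1$-dimensional (even global) comodule, yet $g^{2}\neq g$; more generally the translated integrals \eqref{eq:partial_integral_trans} of \cref{ex:partial_integral} have non-idempotent structure element. The subcentral idempotent has to be \emph{manufactured} from $r$, namely as $e=S^{-1}(r)r$ (or $rS(r)$); proving that this element is idempotent and satisfies $ee_{(1)}\otimes e_{(2)}=e\otimes e=e_{(1)}e\otimes e_{(2)}$ is the content of \cref{pr:ptensorp} and \cref{cor:sinvsubcentr}, and this is where $S^{-1}$ genuinely enters --- not, as you suggest, merely to ``rearrange the identities into a clean symmetric form.''

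The second half of your plan inherits the problem. Since $e\in A_e$ and $\epsilon(e)=1$, one has $\pi(e)=\pi(1)$, so any grouplike ``coming from $e$ itself'' is just the trivial one, and the choice of $W=k$ with coaction $1\mapsto 1\otimes\pi(1)$ yields $T\cotensor^{\bar H_e}He\cong{}^{\mathrm{co}\bar H_e}H\,e$ with the \emph{untranslated} coaction $x\mapsto x_{(1)}e\otimes x_{(2)}$ (\cref{le:partialcomodulealgebra}); this recovers exactly the $1$-dimensional partial comodule \emph{algebras} but misses the translates. The correct choice, and the one the paper makes, is the $1$-dimensional $\bar H_e$-comodule attached to the grouplike $\pi(r)$: one must first prove that $\pi(r)$ is grouplike in $\bar H_e$ (\cref{lem:grplike}), a computation that uses the regularity of $r$ and the fact that $e$ is an integral in $A_e$ (\cref{le:p_int}). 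With that choice the cotensor product collapses to $\mathrm{span}\{r\}$ with coaction $r\mapsto r\otimes r$, matching the original comodule. So your fallback instinct at the end points in the right direction, but the grouplike must be extracted from $r$, not from $e$, and the idempotency claim must be replaced by the regularity-plus-construction argument.
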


If \(H = kG\), for \(G\) a finite group, we are able to say a lot more about the constructed partial \(kG\)-comodules. This is done in \cref{se:groups}. We show in \cref{th:groupcase_simple} that whenever the \( \bar{H}_e\)-comodule \(W\) is taken simple, the resulting partial comodule will be simple too, hence proving \cref{conj:simple} for finite group algebras. Moreover, it turns out that two partial comodules obtained by \cref{construction:general2} are isomorphic exactly when they are related by a multiplicative character of a subgroup of \(G\) (\cref{th:redundancyI}). With the help of a computer program, we were able to determine the dimension of the partial Hopf algebras \((kS_3^*)_{par}, (kD_8^*)_{par}\) and \((kQ_8^*)_{par}\). This way we show that in fact our construction produces a complete list of simple partial comodules over these groups, and that the category of their partial comodules is semisimple, supporting the conjectures mentioned above.

The structure of the article is as follows. In \cref{se:prelim}, we recall some preliminary definitions and results about partial modules, partial comodules and right coideal subalgebras. We also provide some examples that will prove to be useful in the main part of the paper. 
In \cref{se:construction}, the general construction is outlined and we show that it recovers the known construction for simple partial modules of groups from \cite{DZ}. \cref{se:1D} is dedicated to the proof of \cref{th:1D}. The case \(H = kG\) is studied in \cref{se:groups}, and finally, in \cref{se:kac}, we study the partial comodules of the Kac-Paljutkin algebra \(\mathcal{A}\), which is the unique \(8\)-dimensional non-commutative non-cocommutative semisimple Hopf algebra. 

\section{Preliminaries}
\label{se:prelim}

Throughout this text, $\Bbbk$ will be the base field and $H$ will be a Hopf algebra. For the comultiplication we adopt the Sweedler notation \(\Delta(h) = h_{(1)} \otimes h_{(2)}\).

\subsection{Partial representations and partial modules}

\begin{definition}[{\cite{ABVparreps}}]
	A \emph{partial representation} of a Hopf algebra $H$ over a $\Bbbk$-algebra $B$ is a linear map $\pi :H \rightarrow B$ such that
	\begin{enumerate}
		\item[(PR1)] $\pi (1_H) =1_B$;
		\item[(PR2)] $\pi (h)\pi (k_{(1)}) \pi (S(k_{(2)}))=\pi (hk_{(1)}) \pi (S(k_{(2)}))$, for every $h,k\in H$;
		\item[(PR3)] $\pi (h_{(1)}) \pi (S(h_{(2)})) \pi (k) =\pi (h_{(1)}) \pi (S(h_{(2)}) k)$, for every $h,k\in H$.
	\end{enumerate}
\end{definition}

One can show that axioms (PR2) and (PR3) are equivalent to
\begin{enumerate}
	\item[(PR4)] $\pi (h)\pi (S(k_{(1)})) \pi (k_{(2)}) =
	\pi (h S(k_{(1)})) \pi (k_{(2)})$, for every $h,k\in H$;
	\item[(PR5)] $\pi (S(h_{(1)})) \pi (h_{(2)}) \pi (k) =\pi (S(h_{(1)})) \pi (h_{(2)} k)$, for every $h,k\in H$.
\end{enumerate}

In particular, if $B=\text{End}_\Bbbk (M)$ for a $\Bbbk$-vector space $M$, then we call $M$ a left partial $H$-module. Denote the partial $H$-module structure on $M$ by the map
\begin{equation}
	\label{eq:partialmodule}
\begin{array}{rccl} \bullet : & H\otimes M & \rightarrow  & M \\ 
	\, & h\otimes m & \mapsto & h\bullet m =\pi (h)(m).
\end{array}
\end{equation}
Given two left partial $H$-modules $M$ and $N$, a morphism of partial $H$-modules between them is a linear map $f:M\rightarrow N$, such that $f(h\bullet m)=h\bullet f(m)$ for all $h\in H$ and $m\in M$. The category of left partial $H$-modules is denoted by ${}_H \mathsf{PMod}$.

To the Hopf algebra $H$, one can associate another algebra, denoted by $H_{par}$. This algebra factorizes partial representations of $H$ by algebra morphisms.

\begin{definition}[{\cite{ABVparreps}}]
	Given a Hopf algebra $H$, the \emph{partial Hopf algebra} $H_{par}$ is defined as the quotient $H_{par}= T(H)/\mathcal{I}$, in which $T(H)$ is the tensor algebra over $H$ and $\mathcal{I}$ is the ideal generated by the elements of the form
	\begin{gather*}
		 1_H -1_{T(H)} ;\\  
		 h\otimes k_{(1)} \otimes S(k_{(2)}) -hk_{(1)} \otimes S(k_{(2)}) ;\\
		h_{(1)} \otimes S(h_{(2)}) \otimes k - h_{(1)} \otimes S(h_{(2)}) k 
	\end{gather*}
	for all $h,k\in H$. 
\end{definition}

Denoting by $[h]$ the class of an element $h\in H$ inside this algebra $H_{par}$, it is easy to see that the linear map $[-]: H\rightarrow H_{par}$, sending each element $h\in H$ into its class $[h]\in H_{par}$, is a partial representation of $H$. Moreover, for any partial representation $\pi :H\rightarrow B$, there exists a unique algebra morphism $\widehat{\pi} :H_{par} \rightarrow B$ such that $\pi=\widehat{\pi} \circ [-]$. 
In particular, this implies that the category of left partial $H$-modules, ${}_H \mathsf{PMod}$, is isomorphic to the category of left $H_{par}$-modules, ${}_{H_{par}} \mathsf{Mod}$. The \(H_{par}\)-module structure on a partial module as in \eqref{eq:partialmodule} is given by
\[[h_1] \cdots [h_n] \triangleright m = h_1 \bullet (\cdots \bullet (h_n \bullet m)).\]
	 
There is an important subalgebra of the algebra $H_{par}$, namely the algebra
\begin{equation}
\label{eq:Apar}
A_{par}(H) =\left\langle \varepsilon_h =[h_{(1)}][S(h_{(2)})]\in H_{par} \mid h\in H \right\rangle .
\end{equation}
We will often write just \(A_{par}\) instead of \(A_{par}(H)\).
One can define a partial action of the Hopf algebra $H$ on $A_{par}(H)$ such that the algebra $H_{par}$ is isomorphic to the partial smash product $A_{par} \underline{\#} H$ \cite{ABVparreps}. If the antipode is invertible, then every left partial $H$-module $M$ (viewed as an $H_{par}$-module) can be endowed with the structure of an $A_{par}$-bimodule, given by:
\[
\varepsilon_h \cdot m \cdot  \varepsilon_k =[h_{(1)}][S(h_{(2)})][k_{(2)}][S^{-1}(k_{(1)})] \triangleright m.
\]

The main results about the algebra $H_{par}$ and its subalgebra $A_{par}$ can be summarized in the following theorem.

\begin{theorem}[{\cite{ABVparreps}}]
    Let \(H\) be a Hopf algebra with invertible antipode.
	\begin{enumerate}[(i),leftmargin=0.8cm]
		\item The algebra $H_{par}$ has the structure of a Hopf algebroid over the subalgebra $A_{par}$.
		\item The category of partial $H$-modules is isomorphic to the category of $H_{par}$-modules, which is a closed monoidal category and for which the forgetful functor $U: {}_{H_{par}}\mathsf{Mod} \rightarrow {}_{A_{par}} \mathsf{Mod}_{A_{par}}$ is closed and strongly monoidal.
	\end{enumerate}
\end{theorem}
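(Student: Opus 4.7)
The plan is to proceed in two stages corresponding to the two parts of the theorem, exploiting the isomorphism $H_{par} \cong A_{par} \,\underline{\#}\, H$ recalled in the paragraph preceding the statement. For part (i), I would first pin down a concrete partial action of $H$ on $A_{par}$ that realises this partial smash product, reading off the formulas from the defining relations of $H_{par}$. Once $H_{par}$ is presented in smash-product form, the base algebra of the Hopf algebroid is taken to be $A_{par}$, with source map the canonical inclusion $s : A_{par} \hookrightarrow H_{par}$, $a \mapsto a\,\underline{\#}\,1$, and target map obtained by composing $s$ with an anti-algebra endomorphism of $A_{par}$ built from $S^{-1}$ (this is where the hypothesis on invertibility of $S$ first enters). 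Comultiplication is declared on generators by $\Delta([h]) = [h_{(1)}] \otimes_{A_{par}} [h_{(2)}]$, counit by $[h] \mapsto \varepsilon_h = [h_{(1)}][S(h_{(2)})]$, and antipode by $\mathcal{S}([h]) = [S(h)]$, extended anti-multiplicatively.

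For part (ii), the isomorphism of categories ${}_H\mathsf{PMod} \cong {}_{H_{par}}\mathsf{Mod}$ is essentially automatic: a partial module structure on $M$ is the same as a partial representation $\pi : H \to \End_\Bbbk(M)$, which by the universal property of $H_{par}$ extends uniquely to an algebra map $\widehat{\pi} : H_{par} \to \End_\Bbbk(M)$; partial $H$-module morphisms correspond to $H_{par}$-linear maps on the nose. The closed monoidal structure then comes from part (i) via the general principle that modules over a (left) Hopf algebroid $\mathcal{H}$ over a base $A$ form a closed monoidal category, with monoidal product the bimodule tensor product $-\otimes_A-$ pulled back along $\Delta$, internal hom built from the antipode, and with the forgetful functor to $A$-bimodules strongly monoidal by construction. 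Applied to $\mathcal{H} = H_{par}$ over $A = A_{par}$ from (i), this furnishes all the required structure on ${}_{H_{par}}\mathsf{Mod}$, and the closedness of $U$ reduces to the antipode compatibility with the bimodule action.

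The main obstacle, in my view, is entirely contained in part (i): one must verify that $\Delta$ descends from the tensor algebra $T(H)$ to the quotient $H_{par} = T(H)/\mathcal{I}$ and, moreover, that it lands not in the plain tensor product $H_{par} \otimes H_{par}$ but in the Takeuchi subspace $H_{par} \times_{A_{par}} H_{par}$, so as to give a genuine bialgebroid. This is precisely the delicate bookkeeping that the relations defining $\mathcal{I}$ were cooked up to enable: each of the defining relations matches, after applying $\Delta$, a corresponding relation that becomes trivial once the $A_{par}$-balancing is taken into account. Once this verification is done and the antipode axioms are checked against the identities $S(h_{(1)})h_{(2)} = \varepsilon(h)1 = h_{(1)}S(h_{(2)})$ in $H$, the remaining compatibilities (coassociativity of $\Delta$, counitality, and the standard monoidal-functor coherences on $U$) are routine and follow directly from the corresponding identities in $H$.
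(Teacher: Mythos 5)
This statement is recalled from \cite{ABVparreps} and the paper offers no proof of its own, so the only meaningful comparison is with the argument in that reference; your sketch reconstructs exactly that argument: the identification $H_{par}\cong A_{par}\,\underline{\#}\,H$, the Hopf algebroid structure with $\Delta([h])=[h_{(1)}]\otimes_{A_{par}}[h_{(2)}]$, counit $[h]\mapsto\varepsilon_h$, antipode $[h]\mapsto[S(h)]$, the target map via $S^{-1}$, and the descent of the monoidal and closed structure from general (left) bialgebroid/Hopf algebroid theory. You also correctly isolate the genuinely delicate point, namely that $\Delta$ descends to the quotient of $T(H)$ and lands in the Takeuchi product, so the proposal is a faithful outline of the standard proof.
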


\begin{example}
	\label{ex:finitegroups}
	The partial representations of finite groups are classified in \cite{DEP}. It is shown there that if \(H = kG\) for a finite group \(G,\) then \(H_{par}\) is isomorphic to a groupoid algebra \(k\Gamma(G)\), where the objects of the groupoid \(\Gamma(G)\) are subsets of \(G\) containing the unit. In particular it was shown that \(H_{par}\) is a finite-dimensional semisimple algebra if \(k\) is of characteristic \(0\) and algebraically closed. 
	
	The structure of the simple partial \(kG\)-modules is discussed in more detail in \cite{DZ}. Let \(X\) be an object of \(\Gamma(G),\) i.\,e.\ a subset of \(G\) containig the unit, and \(K\) its left stabilizer. Then \(X = \bigcup_i K g_i\) for some representatives \(g_i \in G\). This induces a \((kG_{par}, kK)\)-bimodule \(kX^{-1}\). Indeed, \(X^{-1} = \bigcup_i g_i^{-1} K\) is a basis for \(kX^{-1}\) on which \(G\) acts partially on the left (this is simply the restriction of the regular action on \(G\) to \(X^{-1}\)) and \(K\) acts globally on the right. In fact, this bimodule only depends on the connected component in \(\Gamma(G)\) and not on the object \(X\) picked from that connected component.
    Now \cite[Corollary 2.4]{DZ} states that every simple partial \(kG\)-module is of the form
    \begin{equation}
        \label{eq:kXW}
        kX^{-1} \otimes_{kK} W
    \end{equation}
    for some \(X \subseteq G\) containing the unit and some simple \(kK\)-module \(W\).
\end{example}

\subsection{Algebraic partial comodules}

In \cite{ABQVcoreps}, the notion of an algebraic partial $H$-comodule was introduced. The axioms are exactly dual to those of partial modules.

\begin{definition}
    \label{def:parcomod}
	A \emph{right} (\emph{algebraic}) \emph{partial \(H\)-comodule} is a \(k\)-vector space \(M\) endowed with a \(k\)-linear map \(\rho : M \to M \otimes H,\) satisfying
	\begin{enumerate}[(PCM1),leftmargin=1.8cm]
		\item \label{PCM1} $(M\otimes \epsilon) \rho =M$;
		\item  \label{PCM2} $(M\otimes H \otimes \mu)(M\otimes H \otimes H \otimes S)(M\otimes \Delta \otimes H) (\rho \otimes H )\rho =(M\otimes H \otimes \mu)(M\otimes H \otimes H \otimes S)$\\
		$(\rho \otimes H \otimes H)(\rho \otimes H)\rho$;
		\item \label{PCM3} $(M\otimes \mu \otimes H)(M\otimes H \otimes S \otimes H) (M\otimes H \otimes \Delta)(\rho \otimes H)\rho =(M\otimes \mu \otimes H)(M\otimes H \otimes S \otimes H)$\\
		$(\rho \otimes H \otimes H)(\rho \otimes H)\rho$.
	\end{enumerate}
	The first axiom tells that \(\rho\) is counital, while \ref{PCM2} and \ref{PCM3} express the \textit{partial coassociativity}. 
	As shown in \cite[Lemma 3.3]{ABQVcoreps}, axioms \ref{PCM2} and \ref{PCM3} are equivalent to
	\begin{enumerate}[(PCM1),leftmargin=1.8cm]
		\setcounter{enumi}{3}
		\item  \label{PCM4} $(M\otimes H\otimes \mu)(M\otimes H \otimes S \otimes H)(M\otimes \Delta \otimes  H)(\rho \otimes H)\rho=(M\otimes H\otimes \mu)(M\otimes H \otimes S \otimes H)$\\
		$(\rho \otimes H \otimes H)(\rho \otimes H)\rho$;
		\item \label{PCM5} $(M\otimes \mu \otimes H)(M\otimes S\otimes H \otimes H)(M\otimes H \otimes \Delta)(\rho \otimes H)\rho =(M\otimes \mu \otimes H)(M\otimes S\otimes H \otimes H)$\\
		$(\rho \otimes H \otimes H)(\rho \otimes H)\rho$.
	\end{enumerate}
	
	A \emph{morphism between partial comodules} \((M, \rho)\) and \((N, \sigma)\) is a linear map \(f : M \to N\) such that \((f \otimes H) \rho = \sigma f\). 
	The category of right partial comodules over \(H\) is denoted by \(\mathsf{PMod}^H\). 
\end{definition}

The reason why the foregoing partial comodules are called \emph{algebraic} is to distinguish them from the \emph{geometric} partial comodules introduced in \cite{JoostJiawey} and further studied in \cite{PJ1,PJ2,PJ3}. 
Although any algebraic partial comodule can be endowed with a geometric partial comodule structure, as highlighted in \cite[Proposition 3.20]{PJ3}, the converse is not necessarily true (see e.\,g.\ \cite[Example 3.14 and the preceeding paragraph]{ABQVcoreps}).

Using Sweedler notation, we write \(\rho(m) = m^{(0)} \otimes m^{(1)}\) and the axioms of partial comodules read, for all \(m \in M\),
\begin{enumerate}[(PCM1),leftmargin=1.8cm]
	\item \(m^{(0)} \epsilon(m^{(1)}) = m;\)
	\item \(m^{(0)(0)} \otimes {m^{(0)(1)}}_{(1)} \otimes {m^{(0)(1)}}_{(2)} S(m^{(1)}) = m^{(0)(0)(0)} \otimes m^{(0)(0)(1)} \otimes m^{(0)(1)} S(m^{(1)});\)
	\item \(m^{(0)(0)} \otimes {m^{(0)(1)}} S({m^{(1)}}_{(1)}) \otimes {m^{(1)}}_{(2)} = m^{(0)(0)(0)} \otimes m^{(0)(0)(1)} S(m^{(0)(1)}) \otimes m^{(1)};\)
	\item \(m^{(0)(0)} \otimes {m^{(0)(1)}}_{(1)} \otimes S({m^{(0)(1)}}_{(2)}) m^{(1)} = m^{(0)(0)(0)} \otimes m^{(0)(0)(1)} \otimes S(m^{(0)(1)}) m^{(1)};\)
	\item \(m^{(0)(0)} \otimes S({m^{(0)(1)}}) {m^{(1)}}_{(1)} \otimes {m^{(1)}}_{(2)} = m^{(0)(0)(0)} \otimes S(m^{(0)(0)(1)}) m^{(0)(1)} \otimes m^{(1)}.\)
\end{enumerate}

As might be expected, there is a strong connection between partial \(H\)-comodules and partial modules over the (finite) dual of \(H\).

\begin{theorem}[\cite{ABQVcoreps}]
    \label{thm:isomodcomod}
	Let \(H\) be a Hopf algebra over a field \(k\) such that \(H^\circ\) is dense in \(H^*\). Let \(\rho : M \to M \otimes H\) be a linear map. Then \((M, \rho)\) is a partial \(H\)-comodule if and only if \((M, \lambda)\) is a partial \(H^\circ\)-module, where
	\[\lambda : H^\circ \otimes M \to M : h^* \otimes m \mapsto m^{(0)} h^*(m^{(1)}).\]
	If \(H\) is finite-dimensional, then every partial \(H^*\)-module is of this form.
\end{theorem}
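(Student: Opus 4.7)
My approach is to translate each partial-comodule axiom into the corresponding partial-module axiom by pairing against elements of \(H^\circ \otimes H^\circ\), and then to invoke the density hypothesis to lift the resulting scalar identities back to tensorial ones. Given \(\rho\), one sets \(\lambda(h^* \otimes m) := m^{(0)} h^*(m^{(1)})\); conversely, when \(H\) is finite-dimensional one uses dual bases \(\{e_i\} \subseteq H\) and \(\{e^i\} \subseteq H^*\) to define \(\rho(m) := \sum_i \lambda(e^i \otimes m) \otimes e_i\), and then \(\lambda(h^* \otimes m) = m^{(0)} h^*(m^{(1)})\) follows at once from the defining property of the dual basis. The equivalence of \emph{(PCM1)} and \emph{(PR1)} is trivial, since the unit of \(H^\circ\) is \(\epsilon\) and \(\lambda(\epsilon \otimes m) = m^{(0)} \epsilon(m^{(1)})\) equals \(m\) exactly when \(\rho\) is counital.

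The heart of the argument is the equivalence \emph{(PCM2)} \(\Leftrightarrow\) \emph{(PR2)}. Using that the product on \(H^\circ\) is convolution and its antipode is pre-composition with \(S\), a direct unwinding of the iterated action gives
\[
h^* \bullet \bigl(k^*_{(1)} \bullet (S(k^*_{(2)}) \bullet m)\bigr) = m^{(0)(0)(0)}\, h^*(m^{(0)(0)(1)})\, k^*\bigl(m^{(0)(1)} S(m^{(1)})\bigr)
\]
and
\[
(h^* k^*_{(1)}) \bullet (S(k^*_{(2)}) \bullet m) = m^{(0)(0)}\, h^*(m^{(0)(1)}_{(1)})\, k^*\bigl(m^{(0)(1)}_{(2)} S(m^{(1)})\bigr).
\]
Hence \emph{(PR2)} for every \(h^*, k^* \in H^\circ\) is exactly the statement that the two tensors appearing in \emph{(PCM2)}, viewed as elements of \(M \otimes H \otimes H\), are indistinguishable under pairings with \(H^\circ \otimes H^\circ\). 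The density of \(H^\circ\) in \(H^*\) implies that \(H^\circ \otimes H^\circ\) separates points of any finite-dimensional \(U \otimes V \subseteq H \otimes H\) (the restriction maps \(H^\circ \to U^*\) and \(H^\circ \to V^*\) are surjective), so the scalar identities lift to the tensorial identity \emph{(PCM2)}. A symmetric computation, starting from the iterated action prescribed by \emph{(PR3)}, yields \emph{(PCM3)} \(\Leftrightarrow\) \emph{(PR3)}.

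For the final assertion, when \(H\) is finite-dimensional one has \(H^\circ = H^*\), so the density hypothesis is automatic and every partial \(H^*\)-module \(\lambda\) gives rise, via the construction in the first paragraph, to a linear map \(\rho\) satisfying \(\lambda(h^* \otimes m) = m^{(0)} h^*(m^{(1)})\); this \(\rho\) is then forced to be a partial \(H\)-comodule by the equivalences just established. I expect the main obstacle to be the careful bookkeeping in the central computation: one has to scrupulously track the convolution product and antipode on \(H^\circ\) and then re-read the resulting scalar identities as equations in \(M \otimes H \otimes H\), for which the density hypothesis is indispensable in transferring functional-indexed equalities back to genuine tensorial equalities.
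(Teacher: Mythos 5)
Your proposal is correct: the paper itself states this result without proof (citing \cite{ABQVcoreps}), and your argument --- unwinding the iterated $H^\circ$-action via the convolution product and antipode of $H^\circ$, matching the resulting scalar identities against the Sweedler form of \ref{PCM2} and \ref{PCM3}, and using density of $H^\circ$ in $H^*$ (surjectivity of the restrictions onto duals of finite-dimensional subspaces) to pass back from pairings to tensorial equalities --- is exactly the standard duality argument used in the cited source. The dual-basis construction of $\rho$ from $\lambda$ in the finite-dimensional case is likewise the expected one, so nothing is missing.
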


In particular, if \(H\) is finite-dimensional,
\begin{equation}
    \label{eq:isomodcomod}
    \PMod^H \simeq {_{H^*}} \PMod \simeq {_{(H^*)_{par}}} \Mod.
\end{equation}

\begin{example}
	For a finite group \(G\), consider the dual group algebra \(k G^*\). Since \(\PMod^{k G^*} \simeq {_{k G}}\PMod\), partial \(kG^*\)-comodules form a semisimple category if \(k\) is algebraically closed and of characteristic \(0\), and their structure is described by the work in \cite{DEP, DZ}. Cf. \cref{ex:finitegroups}.
\end{example}

\begin{example}\label{ex:gtrans}
    Let \(H\) be a Hopf algebra, \((N,\rho_N)\) be a right partial \(H\)-comodule and \(g \in H\) be a grouplike element. Then \(N\) with
    \[\rho_N^g\colon N \to N \otimes H : n \mapsto n^{(0)} \otimes gn^{(1)}\]
    is a right partial \(H\)-comodule.
\end{example}

\begin{example}
    \label{ex:partial_integral}
	As a particular case of interest of \cref{ex:gtrans}, let \(G\) be a finite group and let \(k\) be a field such that \(\mathrm{char}(k)\) does not divide \(|G|\). Take a subgroup \(K\) of \(G\). Then
	\begin{equation}
        \label{eq:partial_integral}
        k \to k \otimes kG : 1 \mapsto 1 \otimes \frac{1}{|K|}\sum_{h \in K} h
    \end{equation}
	defines a partial \(kG\)-comodule structure on \(k\). If \(g \in G \setminus K\) then also
	\begin{equation}
        \label{eq:partial_integral_trans}
	    k \to k \otimes kG : 1 \mapsto 1 \otimes \frac{1}{|K|}\sum_{h \in K} gh
	\end{equation}
	defines a partial \(kG\)-comodule structure on \(k\). In \cref{se:groups} we will show that every \(1\)-dimensional partial \(kG\)-comodule is of this form. 
\end{example}

Partial comodule algebras appeared in \cite{ABenvelope, BVdual, CJ}. Let us recall their definition.

\begin{definition}
	A (\emph{symmetric}) \emph{right partial coaction} of \(H\) on a unital algebra \(B\) is a linear map
	\[\rho : B \to B \otimes H : a \mapsto a^{(0)} \otimes a^{(1)}\]
	such that for all \(a, b \in B\)
	\begin{enumerate}[(PC1)]
		\item \label{PC1} \((B \otimes \epsilon)\rho(a) = a;\)
		\item \label{PC2} \(\rho(ab) = \rho(a) \rho(b);\)
		\item \label{PC3} \((\rho(1_B) \otimes 1_H)\big((B \otimes \Delta)\rho(a)\big) = (\rho \otimes H)\rho(a) = \big((B \otimes \Delta)\rho(a)\big)(\rho(1_B) \otimes 1_H).\)
	\end{enumerate}
	We call \(B\) a \emph{partial comodule algebra}. 
\end{definition}

The motivation for calling such a partial coaction \emph{symmetric} is to distinguish it from the one introduced in \cite{CJ}, where only one of the equalities in \ref{PC3} was imposed. However, here we are only concerned with symmetric partial coactions and hence we will often omit to highlight it.

It is a direct check that a partial comodule algebra is a partial comodule in the sense of \cref{def:parcomod}.

\subsection{Right coideal subalgebras}

One of the key ingredients of our construction of partial comodules are right coideal subalgebras.

\begin{definition}
	A right coideal subalgebra of \(H\) is a subalgebra \(A\) such that \(\Delta(A) \subseteq A \otimes H\).
\end{definition}

We also have the following correspondence, originally due to Takeuchi (see \cite[Proposition 1]{Takeuchi}).
Our rephrasing is slightly richer than the original. The interested reader may refer to \cite[\S3.1.5]{EGSV} and \cite[\S2]{Saracco} for an analogue of this result in the more general bialgebroid setting.

\begin{proposition}
	\label{le:correspondance_idealcoideal}
	Let \(H\) be a Hopf algebra. Then we have a monotone Galois connection (or, equivalently, an adjunction)  between the following posets:
	\begin{equation}
		\label{eq:correspondance_coidealsubalg}
        \xymatrix @R=0pt{
		\left\{\text{right coideal subalgebras of $H$}\right\} \ar@<+0.5ex>[r]^-{\Phi} & \left\{\text{left ideal coideals of $H$}\right\} \ar@<+0.5ex>[l]^-{\Psi} \\
		A \ar@{|->}[r] & H A^+  \\
		{^{\mathrm{co} H/B}} H & B \ar@{|->}[l]
        }
	\end{equation}
	where \(A^+ =  A \cap \ker \epsilon\) and \({^{\mathrm{co} H/B}} H = \{h \in H \mid \pi(h_{(1)}) \otimes h_{(2)} = \pi(1_H) \otimes h\}\), \(\pi : H \to H/B\) being the canonical projection.
\end{proposition}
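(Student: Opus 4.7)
The plan is to verify that $\Phi$ and $\Psi$ land in the stated posets (ordered by inclusion) and that they satisfy the adjunction condition $\Phi(A) \subseteq B \iff A \subseteq \Psi(B)$; monotonicity of both maps with respect to inclusion is immediate from their definitions.

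First I would show that $\Phi(A) = HA^+$ is a left ideal coideal. Left-ideality is clear, and $\epsilon$ vanishes on $A^+$. For the comultiplication, since $A$ is a right coideal subalgebra, for $a \in A^+$ one may write $\Delta(a) = a_{(1)} \otimes a_{(2)}$ with $a_{(1)} \in A$, and decomposing $a_{(1)} = (a_{(1)} - \epsilon(a_{(1)}) 1_H) + \epsilon(a_{(1)}) 1_H$ yields
\[\Delta(ha) = h_{(1)}\bigl(a_{(1)} - \epsilon(a_{(1)}) 1_H\bigr) \otimes h_{(2)} a_{(2)} + h_{(1)} \otimes h_{(2)} a \in HA^+ \otimes H + H \otimes HA^+,\]
as needed.

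Next I would verify that $\Psi(B) = {}^{\mathrm{co}\, H/B} H$ is a right coideal subalgebra. The hypothesis that $B$ is a coideal makes $H/B$ into a coalgebra with $\pi$ a coalgebra morphism, and the left-ideal hypothesis equips $H/B$ with a left $H$-module structure via $h \cdot \pi(x) := \pi(hx)$. The right coideal property $\Delta(\Psi(B)) \subseteq \Psi(B) \otimes H$ follows by applying $H \otimes \Delta$ to the defining relation of $\Psi(B)$ and comparing with the image of coassociativity under $\pi$. Closure under multiplication is the delicate point, because $H/B$ is not itself an algebra. For $h, k \in \Psi(B)$, the idea is to apply the well-defined linear map $H \otimes H/B \otimes H \to H/B \otimes H$, $h \otimes \pi(x) \otimes y \mapsto \pi(h_{(1)} x) \otimes h_{(2)} y$, to the identity $\pi(k_{(1)}) \otimes k_{(2)} = \pi(1_H) \otimes k$; this yields $\pi(h_{(1)} k_{(1)}) \otimes h_{(2)} k_{(2)} = \pi(h_{(1)}) \otimes h_{(2)} k$, whereupon the defining relation for $h$ gives $\pi(1_H) \otimes hk$, so $hk \in \Psi(B)$.

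Finally, the Galois condition reduces to a clean dichotomy. On one hand, $\Phi(A) \subseteq B$ is equivalent to $A^+ \subseteq B$, since $B$ is a left ideal and $A^+ \subseteq HA^+$. On the other hand, $A \subseteq \Psi(B)$ is equivalent to $\pi(a) = \epsilon(a) \pi(1_H)$ for all $a \in A$, i.e.\ to $\pi(A^+) = 0$, i.e.\ again to $A^+ \subseteq B$: the forward implication uses the decomposition $a = (a - \epsilon(a) 1_H) + \epsilon(a) 1_H$ with the first summand in $A^+ \subseteq B$, and the backward implication follows by applying $H \otimes \epsilon$ to the defining relation of $\Psi(B)$. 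The main obstacle is the subalgebra verification for $\Psi(B)$; the key point to overcome it is to exploit the left $H$-module structure on $H/B$ in conjunction with Sweedler notation, rather than attempting to endow $H/B$ with a non-existent algebra structure.
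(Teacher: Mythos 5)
The paper gives no proof of this proposition: it is quoted from Takeuchi's original (\cite{Takeuchi}, Proposition 1), with pointers to \cite{EGSV} and \cite{Saracco} for the bialgebroid analogue. Your direct verification is correct and is essentially the standard argument one finds in those sources: the computation of \(\Delta(ha)\) via the splitting \(a_{(1)} = (a_{(1)} - \epsilon(a_{(1)})1_H) + \epsilon(a_{(1)})1_H\) (valid because \(\Delta(A)\subseteq A\otimes H\)) correctly shows \(HA^+\) is a two-sided coideal, your well-defined map \(H\otimes H/B\otimes H\to H/B\otimes H\) is the right device for closure of \({}^{\mathrm{co}\, H/B}H\) under multiplication, and the reduction of both sides of the adjunction condition to \(A^+\subseteq B\) is exactly right. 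Two small points you should make explicit in a written-up version: (i) the identification of \(\ker\bigl(\rho - \pi(1_H)\otimes(-)\bigr)\otimes H\) with \(\Psi(B)\otimes H\) in the right-coideal step uses that tensoring with \(H\) over a field preserves kernels; (ii) in the chain ``\(A\subseteq\Psi(B)\) iff \(\pi(a)=\epsilon(a)\pi(1_H)\) for all \(a\in A\)'', the backward implication is where \(\Delta(A)\subseteq A\otimes H\) is used (apply the decomposition to \(a_{(1)}\in A\), not to \(a\) itself); for a general subspace \(A\) only the forward implication would hold. Neither is a gap, just a place to be precise.
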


So for any right coideal subalgebra \(A\) one can construct the coalgebra and left \(H\)-module quotient
\begin{equation}
	\label{eq:Hbar}
	\pi : H \to H/HA^+ =: \bar{H}.
\end{equation}

\begin{example}
\label{ex:rcs}
	\begin{enumerate}[(i),leftmargin=0.8cm]
		\item Obviously, any Hopf subalgebra is a right coideal subalgebra. If \(H = kG\) is the group algebra of a finite group, then \(A\) is a right coideal subalgebra precisely when it is a Hopf subalgebra.
		\item Let \(G\) be a finite group and consider the dual group algebra \(kG^*\), with basis \(\{p_g \mid g \in G\}\) which is dual to the basis \(G\) of \(kG\). For a subgroup \(K\) of \(G\), 
		\[\left \langle \sum_{h \in K} p_{hg} \mid g \in G \right \rangle\]
		is a right coideal subalgebra of \(kG^*\) which is a Hopf subalgebra if and only if \(K\) is a normal subgroup of \(G\). In fact, every right coideal subalgebra \(A\) is of this form (see e.\,g.\ \cite[Example 5.19]{Natale}). Indeed, since \(A\) is a commutative semisimple algebra (see e.\,g.\ \cite[Lemma 4.0.2]{Bkernels}), it has a basis of primitive idempotents. Let \(e=\sum_{g \in X} p_g\) the primitive idempotent for which \(1 \in X\). Then, since \(A\) is a right \(kG^*\)-comodule, \(\sum_{g \in Xh} p_g \in A\) for all \(h \in G\). Moreover, \(Xh \cap Xh' = \varnothing\) or \(Xh = Xh'\) for any \(h, h' \in G\) because if not \(e\) would not be primitive. Now it is easy to see that \(X\) is a subgroup of \(G\).	\qedhere
	\end{enumerate}
\end{example}

\section{Construction of partial comodules}
\label{se:construction}

\subsection{The general construction}

The following variation of \cite[Proposition 8]{ABenvelope} will be useful in what follows.
\begin{proposition}
	\label{prop:partialcoaction_ideal}
	Let \(A\) be a right \(H\)-comodule algebra with coaction \(\rho : A \to A \otimes H\). If \(I\) is an ideal of \(A\) possessing a unit \(e\) (that is, $xe=x=ex$ for all $x\in I$), then the map
	\begin{equation}\label{eq:rhoI}
 \rho_I(x) \coloneqq \rho(x)(e \otimes 1_H) = x^{(0)} e \otimes x^{(1)}
 \end{equation}
	defines a symmetric 
    right partial coaction of \(H\) on \(I\). 
\end{proposition}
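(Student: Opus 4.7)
The key observation to make first is that the hypotheses force $e$ to be central in $A$. Indeed, for any $a \in A$, both $ae$ and $ea$ lie in $I$, so the two-sided unit property applied to each gives $eae = ae$ and $eae = ea$, hence $ea = ae$. Consequently $\rho(e)$ commutes with $\rho(a)$ for every $a \in A$: $\rho(e)\rho(a) = \rho(ea) = \rho(ae) = \rho(a)\rho(e)$ in $A \otimes H$. This is the crucial identity powering the symmetric half of (PC3).

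Axioms (PC1) and (PC2) are then straightforward. Counitality is direct: $(I \otimes \epsilon)\rho_I(x) = x^{(0)}e\,\epsilon(x^{(1)}) = xe = x$ for $x \in I$. Multiplicativity uses that $\rho$ is an algebra morphism together with the identity $e y^{(0)} e = y^{(0)} e$, which holds because $y^{(0)} e \in I$ (so $e$ fixes it on the left); this yields $\rho_I(x)\rho_I(y) = x^{(0)} e y^{(0)} e \otimes x^{(1)} y^{(1)} = x^{(0)} y^{(0)} e \otimes x^{(1)} y^{(1)} = \rho_I(xy)$.

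The substantial step is the symmetric axiom (PC3). Using coassociativity of $\rho$ and the identity $e e^{(0)} e = e^{(0)} e$ (valid since $e^{(0)} e \in I$), a direct computation gives
\[(\rho_I \otimes H)\rho_I(x) = (I \otimes \Delta)\rho_I(x) \cdot (\rho_I(e) \otimes 1_H) = x^{(0)} e^{(0)} e \otimes x^{(1)}{}_{(1)} e^{(1)} \otimes x^{(1)}{}_{(2)},\]
which settles one of the two required equalities. For equality with $(\rho_I(e) \otimes 1_H) \cdot (I \otimes \Delta)\rho_I(x) = e^{(0)} x^{(0)} e \otimes e^{(1)} x^{(1)}{}_{(1)} \otimes x^{(1)}{}_{(2)}$ (obtained using $e x^{(0)} e = x^{(0)} e$), I would invoke centrality of $e$: applying the algebra morphism $\rho$ to $e x^{(0)} = x^{(0)} e$ and multiplying by $e \otimes 1_H$ on the right produces $e^{(0)} x^{(0)(0)} e \otimes e^{(1)} x^{(0)(1)} = x^{(0)(0)} e^{(0)} e \otimes x^{(0)(1)} e^{(1)}$, which after tensoring with $x^{(1)}$ and applying coassociativity is exactly the missing identity.

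The main obstacle is not the bookkeeping but rather recognizing that $e$ must automatically be central in $A$---a fact not stated in the hypothesis but forced by the two-sided ideal condition. Without this centrality the "symmetric" half of (PC3) does not follow from the unit-of-ideal condition alone, whereas with it every remaining manipulation is a routine combination of coassociativity and the ideal absorption $e \cdot I = I = I \cdot e$.
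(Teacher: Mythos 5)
Your proof is correct and takes essentially the same route as the paper's: both begin by observing that the two-sided unit property of the ideal forces $e$ to be central in $A$, and then verify (PC1)--(PC3) by combining multiplicativity and coassociativity of $\rho$ with the absorption identities $eye = ye$ for $y \in I$. The only cosmetic difference is in (PC3), where the paper expands $(\rho_I \otimes H)\rho_I(x) = \rho_I(x^{(0)}e)\otimes x^{(1)} = \rho_I(ex^{(0)})\otimes x^{(1)}$ to get both sides at once, while you establish one side directly and obtain the other by applying $\rho$ to the centrality relation --- the same computation arranged differently.
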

\begin{proof}
Remark that \(e\) is central in \(A,\) since for any \(a \in A,\) both \(ea\) and \(ae\) are in \(I\) and equal to \(eae\) because \(e\) is a unit for \(I\).
    For \(x \in I,\)
    \[x^{(0)} e \epsilon( x^{(1)}) = xe = x\]
    so \ref{PC1} is satisfied. If \(x, y \in I,\) then
    \[\rho_I(x) \rho_I(y) = x^{(0)} e y^{(0)} e \otimes x^{(1)} y^{(1)} = x^{(0)} y^{(0)} e^2 \otimes x^{(1)} y^{(1)} = (xy)^{(0)} e \otimes (xy)^{(1)}\]
    because \(A\) is a right \(H\)-comodule algebra, so also \ref{PC2} holds. Finally for \ref{PC3}, 
    \begin{align*}
        (\rho_I \otimes H)\rho_I(x) = \rho_I(x^{(0)} e) \otimes x^{(1)} &= x^{(0)} e^{(0)} e \otimes x^{(1)} e^{(1)} \otimes x^{(2)} \\& = (x^{(0)} e \otimes {x^{(1)}}_{(1)} \otimes {x^{(1)}}_{(2)})(e^{(0)} \otimes e^{(1)} \otimes 1_H), \\
        (\rho_I \otimes H)\rho_I(x) = \rho_I(e x^{(0)} ) \otimes x^{(1)} &= e^{(0)} x^{(0)}  e \otimes e^{(1)} x^{(1)}  \otimes x^{(2)} \\& = (e^{(0)} \otimes e^{(1)} \otimes 1_H) (x^{(0)} e \otimes {x^{(1)}}_{(1)} \otimes {x^{(1)}}_{(2)}). \qedhere
    \end{align*}
\end{proof}

We will now restrict ourselves to the case where the right \(H\)-comodule algebra \(A\) is contained in \(H\) (i.\,e.\ \(A\) is a right coideal subalgebras) and where the idempotent $e$ is central in \(A\). It turns out that the idempotents of \(H\) obtained this way are exactly those that satisfy \(ee_{(1)} \otimes e_{(2)} = e_{(1)}e \otimes e_{(2)}\).

\begin{lemma}
	\label{le:ecentral}
	Let \(e\) be idempotent in \(H\) and let \(A\) be the smallest right coideal subalgebra of \(H\) containing \(e\). Then \(e\) is central in \(A\) if and only if \(ee_{(1)} \otimes e_{(2)} = e_{(1)}e \otimes e_{(2)}\).
\end{lemma}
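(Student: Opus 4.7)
The plan is to prove both implications by first giving an explicit algebraic description of $A$. Namely, let
\[
V \coloneqq \{(\id \otimes g)\Delta(e) \mid g \in H^*\} = \{e_{(1)}g(e_{(2)}) \mid g \in H^*\} \subseteq H,
\]
which is a linear subspace of $H$. I claim $A$ is the subalgebra generated by $V$. Note first that $e = e_{(1)}\epsilon(e_{(2)}) \in V$. For one inclusion, any right coideal subalgebra $B$ containing $e$ has $\Delta(e) \in B \otimes H$, so applying $\id \otimes g$ for every $g \in H^*$ shows $V \subseteq B$. For the other inclusion, I would verify that $A_0 \coloneqq$ the subalgebra generated by $V$ is itself a right coideal subalgebra; together with $e \in V \subseteq A_0$, this gives $A \subseteq A_0$ by minimality of $A$.

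The only nontrivial step in this verification is checking $\Delta(A_0) \subseteq A_0 \otimes H$, which by multiplicativity of $\Delta$ reduces to $\Delta(V) \subseteq A_0 \otimes H$. For $v = e_{(1)}g(e_{(2)}) \in V$, coassociativity gives
\[
\Delta(v) = e_{(1)(1)} \otimes e_{(1)(2)} g(e_{(2)}) = e_{(1)} \otimes e_{(2)(1)} g(e_{(2)(2)}),
\]
and pairing the second tensorand with any $\ell \in H^*$ yields $e_{(1)}(\ell \ast g)(e_{(2)}) \in V$, where $\ast$ denotes the convolution product on $H^*$. A dual basis argument on the second tensorand (writing $\Delta(v) = \sum_i x_i \otimes h_i$ with respect to a fixed basis of $H$, so that $x_j = (\id \otimes h^j)\Delta(v) \in V$) then forces $\Delta(v) \in V \otimes H \subseteq A_0 \otimes H$.

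Once this description of $A$ is available, both implications follow quickly. For the forward direction, write $\Delta(e) = \sum_i a_i \otimes h_i$ with $a_i \in A$; centrality of $e$ gives $ea_i = a_i e$ for each $i$, hence $(e \otimes 1_H)\Delta(e) = \Delta(e)(e \otimes 1_H)$, which is exactly $ee_{(1)} \otimes e_{(2)} = e_{(1)}e \otimes e_{(2)}$. For the reverse direction, the hypothesis is precisely $(e \otimes 1_H)\Delta(e) = \Delta(e)(e \otimes 1_H)$; applying $\id \otimes g$ for any $g \in H^*$ yields $e \cdot v = v \cdot e$ for every $v \in V$, and since $V$ generates $A$ as an algebra, $e$ commutes with every element of $A$.

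The main obstacle is the coassociativity-and-convolution step that closes $V$ under the coproduct; the remaining arguments are routine manipulations in Sweedler notation.
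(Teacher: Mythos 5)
Your proof is correct and follows essentially the same route as the paper: both identify the smallest right coideal subalgebra containing $e$ as the subalgebra generated by the first tensor legs of $\Delta(e)$ (your $V$ is the span of the paper's $a_i$), close it under $\Delta$ by a dual-basis argument on the second leg, and then read off centrality of $e$ from the identity $(e\otimes 1_H)\Delta(e)=\Delta(e)(e\otimes 1_H)$. Your write-up is in fact slightly more explicit than the paper's in verifying that this generated subalgebra really is the \emph{smallest} right coideal subalgebra containing $e$, but this is a refinement rather than a different argument.
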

\begin{proof}
	Suppose that \(ee_{(1)} \otimes e_{(2)} = e_{(1)}e \otimes e_{(2)}\). Write
	\[
	e_{(1)} \otimes e_{(2)} = \sum_{i \in E} a_i \otimes b_i
	\]
	with \(\{b_i \mid i \in E\}\) linearly independent.
	Let \(A\) be the subalgebra generated by the elements \(a_i\). Then \(A\) is a right coideal subalgebra: we know that
	\begin{equation}\label{deltaAi}
		\sum_{i} \Delta(a_i) \otimes b_i = \sum_{i} a_i \otimes \Delta(b_i) ,
	\end{equation}
	and since the \(b_i\) are linearly independent, there exist linear functionals \( b_i^* \in H^*\) such that \( b_i^* (b_j)=\delta_{i,j}\). By applying \(   H \otimes H \otimes b_i^* \) to the equality (\ref{deltaAi}), we conclude that   
	\(\Delta(a_i) \in A \otimes H\) for all \(i\). By the multiplicativity of $\Delta$, we conclude that $\Delta(A) \subseteq A \otimes H$. Since \(\sum_{i} ea_i \otimes b_i = \sum_{i} a_ie \otimes b_i\), \(e\) commutes with all \(a_i\) (again by linear independence of the \(b_i\); we apply \(H \otimes b_i^*\) to the previous equality), and it follows that \(e\) is central in \(A\).
	
	Conversely, if \(e\) is central in \(A\) and \(\Delta(A) \subseteq A \otimes H,\) then clearly \(ee_{(1)} \otimes e_{(2)} = e_{(1)}e \otimes e_{(2)}\).
\end{proof}

The above motivates the following definition.

\begin{definition}
    A \textit{subcentral} idempotent of \(H\) is a non-zero idempotent \(e \in H\) such that 
    \begin{equation}\label{eq:subcentr}
    ee_{(1)} \otimes e_{(2)} = e_{(1)} e \otimes e_{(2)}.
    \end{equation}
\end{definition}

From now on, for any subcentral idempotent \(e \in H\), let \(A_e\) be the right coideal subalgebra generated by \(e\). When there is no risk of confusion, we will simply denote it by \(A\) instead of \(A_e\). We denote by \(\bar{H}_e\) (or simply \(\bar H\)) the coalgebra and left \(H\)-module \(H/HA_e^+\) and by \(\pi_e\) (or simply \(\pi\)) the coalgebra and left \(H\)-linear projection \(H \to \bar{H}\), as we did in \eqref{eq:Hbar}.
Remark that by \cref{prop:partialcoaction_ideal} the ideal \(I\) in \(A\) generated by \(e\) is a partial comodule algebra by means of
\[\rho : I \to I \otimes H : x \mapsto x_{(1)}e \otimes x_{(2)}.\]
However, this does certainly not cover all partial \(H\)-comodules: for instance in \cref{ex:partial_integral}, the partial comodule \eqref{eq:partial_integral} is of the above form, but its translates \eqref{eq:partial_integral_trans} are not necessarily. We now provide a more general construction that allows us to deal with these additional cases, starting with a technical lemma.

\begin{lemma}
	\label{le:Hbarcomodule}
	Let \(e \in H\) be a subcentral idempotent. Then the left ideal \(He\) is a left (global) \(\bar{H}_e\)- and a right partial \(H\)-bicomodule, where the left \(\bar{H}_e\)-comodule structure is given by the restriction of \((\pi_e \otimes H) \circ \Delta\) and the right partial \(H\)-comodule structure is given by
    \begin{equation}\label{eq:rhoe}
    \rho_e \colon He \to He \otimes H : x \mapsto x_{(1)}e \otimes x_{(2)}.
    \end{equation}
\end{lemma}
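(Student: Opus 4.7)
The statement breaks into three items to verify: (i) the right partial coaction $\rho_e$ lands in $He \otimes H$; (ii) the map $\bar{\rho} := (\pi_e \otimes H)\circ\Delta|_{He}$ lands in $\bar{H}_e \otimes He$; and (iii) the axioms for each coaction, together with the bicomodule compatibility, all hold.

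Item (i) is immediate, since $x_{(1)} e \in He$ by construction. For item (ii), the crucial observation is that $\bar{H}_e = H/HA_e^+$ inherits a right $A_e$-action on which $A_e^+$ acts trivially, so $\pi_e(y\,a) = \epsilon(a)\pi_e(y)$ for all $y \in H$ and $a \in A_e$. Since $\Delta(e) \in A_e \otimes H$ places $e_{(1)}$ in $A_e$, one computes for $x = he \in He$:
\[
\bar{\rho}(he) \;=\; \pi_e(h_{(1)} e_{(1)}) \otimes h_{(2)} e_{(2)} \;=\; \epsilon(e_{(1)})\,\pi_e(h_{(1)}) \otimes h_{(2)} e_{(2)} \;=\; \pi_e(h_{(1)}) \otimes h_{(2)} e,
\]
which visibly lies in $\bar{H}_e \otimes He$. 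Alternatively, using $x = xe$ to rewrite $\Delta(x) = \Delta(x)\Delta(e)$ together with $e^2 = e$, one checks directly that $\pi_e(x_{(1)}) \otimes x_{(2)}(1 - e) = 0$.

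For item (iii), the global coaction axioms (counitality and coassociativity) for $\bar{\rho}$ follow from those of $H$ together with $\pi_e$ being a coalgebra morphism. The right partial coaction satisfies \ref{PCM1} because $xe = x$ for $x \in He$; the substantive verification is \ref{PCM2}--\ref{PCM3}. Here I would write $\rho_e(x) = \Delta(x)(e \otimes 1)$ and expand both sides in Sweedler notation. Iterating $\rho_e$ introduces auxiliary copies of $\Delta(e)$ in the tensor factors, and two algebraic identities are needed to reduce them: subcentrality, in the form $(e \otimes 1)\Delta(e) = \Delta(e)(e \otimes 1)$, which lets one slide the raw factor $e$ past $\Delta(e)$; and idempotency, $\Delta(e)\cdot\Delta(e) = \Delta(e^2) = \Delta(e)$, which collapses superfluous copies. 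Together with the antipode axioms $S(y_{(1)})y_{(2)} = \epsilon(y) = y_{(1)}S(y_{(2)})$ to handle the $S$-insertions, both sides of each partial-coassociativity equation reduce to a common expression.

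Finally, the bicomodule compatibility $(\bar{\rho} \otimes H)\circ\rho_e = (\bar{H}_e \otimes \rho_e)\circ\bar{\rho}$ becomes short once the simplified formula $\bar{\rho}(h'e) = \pi_e(h'_{(1)}) \otimes h'_{(2)} e$ is in hand: evaluated on $x = he$, both sides unfold to $\pi_e(h_{(1)}) \otimes h_{(2)} e_{(1)} e \otimes h_{(3)} e_{(2)}$ after applying coassociativity on $h$. The main obstacle is the Sweedler bookkeeping for \ref{PCM2}--\ref{PCM3}: one must carefully track multiple copies of $\Delta(e)$ generated by iterated applications of $\rho_e$, and the collapse to a common form really does hinge on combining subcentrality with the idempotency of $e$.
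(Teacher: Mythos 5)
Your proposal is correct and follows essentially the same route as the paper: the simplification $\bar{\rho}(he)=\pi_e(h_{(1)})\otimes h_{(2)}e$ via $\pi_e(ya)=\epsilon(a)\pi_e(y)$, the reduction of \ref{PCM2}--\ref{PCM3} using subcentrality (in the paper, its image under $\Delta\otimes H$), idempotency, the antipode axiom and $x=xe$, and the same computation for the bicomodule compatibility. The only difference is that you leave the Sweedler bookkeeping for \ref{PCM2}--\ref{PCM3} as a sketch, whereas the paper carries it out explicitly by computing $\rho_e^2$ and $\rho_e^3$; the ingredients you list are exactly the ones that make that computation close.
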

\begin{proof}
	 Since for any \(a \in A\) we have that \(a - \epsilon(a)1_H \in A^+\), it follows that \(\pi(ha) = \pi(h\epsilon(a))\) for all \(h \in H\). Therefore, since \(e_{(1)} \otimes e_{(2)} \in A \otimes H,\) we have that for any \(h \in H\)
	\begin{equation}\label{eq:Hbarcoaction} 
        \begin{aligned}
        (\pi \otimes H) \Delta(he)  &= \pi(h_{(1)} e_{(1)}) \otimes h_{(2)} e_{(2)} \\ 
        &= \pi(h_{(1)} \epsilon(e_{(1)})) \otimes h_{(2)} e_{(2)}  \\ 
        &= \pi(h_{(1)}) \otimes  h_{(2)}e \in \bar{H} \otimes He 
	\end{aligned}
        \end{equation}
        and so \(He\) is a left \(\bar H\)-subcomodule of \(H\).
        
        Concerning the fact that it is also a right partial \(H\)-comodule, the counitality follows because for \(x \in He\), we have \(xe = x\). Now, if we apply $\Delta \otimes H$ on \(ee_{(1)} \otimes e_{(2)} = e_{(1)}e \otimes e_{(2)}\), we get
	\begin{equation}\label{e1e1'}
		e_{(1')} e_{(1)} \otimes e_{(2')} e_{(2)} \otimes e_{(3)} = e_{(1)} e_{(1')} \otimes e_{(2)} e_{(2')} \otimes e_{(3)}.
	\end{equation}
	Thus
	\begin{align*}
		\rho^2(x) &\stackrel{\phantom{\eqref{e1e1'}}}{=} x_{(1)} e_{(1)}e \otimes x_{(2)} e_{(2)} \otimes x_{(3)}, \\
		\rho^3(x)
		&\stackrel{\phantom{\eqref{e1e1'}}}{=} x_{(1)} e_{(1)} e_{(1')} e \otimes x_{(2 )}e_{(2)} e_{(2')}   \otimes x_{(3)} e_{(3)} \otimes x_{(4)} \\
		&\stackrel{\eqref{e1e1'}}{=} x_{(1)} e_{(1')} e_{(1)}  e \otimes x_{(2)} e_{(2')} e_{(2)}    \otimes x_{(3)}e_{(3)} \otimes x_{(4)},
	\end{align*}
	from where we will now deduce the partial coassociativity axioms.
 \begin{description}[leftmargin=0pt]
\item[\ref{PCM2}] On the one hand, we have
	\[ 
	(He \otimes H \otimes \mu)(He\otimes H \otimes H \otimes S)(He \otimes \Delta \otimes H) \rho^2 (x)   =  x_{(1)} e_{(1)} e \otimes x_{(2)} e_{(2)} \otimes x_{(3)} e_{(3)} S(x_{(4)}) .
	\]
	On the other hand,
	\begin{align*}
	 (He \otimes H \otimes \mu) & (He \otimes H \otimes H \otimes S) \rho^3 (x) = x_{(1)} e_{(1)} e_{(1')} e \otimes x_{(2)} e_{(2)} e_{(2')} \otimes x_{(3)} e_{(3)} S(x_{(4)}) \\
		& \stackrel{\phantom{\eqref{e1e1'}}}{=}  x_{(1)} e_{(1)} e_{(1')} e \otimes x_{(2)} e_{(2)} e_{(2')} \otimes x_{(3)} e_{(3)} e_{(3')} S(e_{(4')}) S(x_{(4)}) \\
		& \stackrel{\eqref{e1e1'}}{=}  x_{(1)} e_{(1')} e_{(1)} e \otimes x_{(2)} e_{(2')} e_{(2)} \otimes x_{(3)} e_{(3')} e_{(3)} S(x_{(4)}e_{(4')})  \\
		& \stackrel{\phantom{\eqref{e1e1'}}}{=}  x_{(1)} e_{(1)} e \otimes x_{(2)} e_{(2)} \otimes x_{(3)} e_{(3)} S(x_{(4)}) ,
	\end{align*}
	where the last equality follows from the fact that $x=xe$.
	
\item[\ref{PCM3}] We have
	\[
	(He \otimes \mu \otimes H)(He \otimes H \otimes S \otimes H) (He \otimes H \otimes \Delta )\rho^2 (x) =x_{(1)} e_{(1)}e \otimes x_{(2)} e_{(2)} S(x_{(3)}) \otimes x_{(4)} ,
	\]
	and
	\begin{align*}
		 (He \otimes \mu \otimes H)(He \otimes H \otimes S \otimes H) \rho^3 (x) & \stackrel{\phantom{\eqref{e1e1'}}}{=} x_{(1)} e_{(1)} e_{(1')} e \otimes x_{(2)} e_{(2)} e_{(2')} S(x_{(3)} e_{(3)}) \otimes x_{(4)} \\
		& \stackrel{\eqref{e1e1'}}{=}  x_{(1)} e_{(1')} e_{(1)} e \otimes x_{(2)} e_{(2')} e_{(2)} S(e_{(3)})S( x_{(3)}) \otimes x_{(4)} \\
		& \stackrel{\phantom{\eqref{e1e1'}}}{=}  x_{(1)} e_{(1')} e e \otimes x_{(2)} e_{(2')} S( x_{(3)}) \otimes x_{(4)} \\
		& \stackrel{\phantom{\eqref{e1e1'}}}{=}  x_{(1)} e_{(1)}e \otimes x_{(2)} e_{(2)} S(x_{(3)}) \otimes x_{(4)} ,
	\end{align*}
	where the last equality is true because $e$ is an idempotent.

    Finally, we show that the left \(\bar{H}_e\)-coaction and the right partial \(H\)-coaction commute. Indeed,
    \begin{align*}
        (\pi \otimes H \otimes H)  (\Delta \otimes H) \rho_e(he) &=
        \pi(h_{(1)}e_{(1)} e_{(1')}) \otimes h_{(2)} e_{(2)} e_{(2')} \otimes h_{(3)} e_{(3)} \\
        &= \pi(h_{(1)}) \otimes h_{(2)} e_{(1)} e \otimes h_{(3)} e_{(2)} \\
        &= (H \otimes \rho_e)(\pi \otimes H) \Delta (he). \qedhere
    \end{align*}
 \end{description}
\end{proof}

The following is now immediate.

\begin{corollary}
	\label{le:cotensorcomodule}
	Let \(e \in H\) be a subcentral idempotent and let \((W, \rho^W)\) be a right \(\bar{H}_e\)-comodule. Then the cotensor product \(W \cotensor^{\bar{H}_e} He\) is a right partial \(H\)-comodule by
	\[\rho : W \cotensor^{\bar{H}_e} He \to (W \cotensor^{\bar{H}_e} He) \otimes H : \sum_i w_i \otimes x_i \mapsto \sum_i w_i \otimes {x_i}_{(1)}e \otimes {x_i}_{(2)}.\]
\end{corollary}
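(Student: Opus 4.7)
The plan is to reduce the claim to the content of the previous lemma. Since $\lambda_{He} := (\pi_e \otimes H) \circ \Delta|_{He}$ and $\rho_e$ from \eqref{eq:rhoe} make $He$ into a left $\bar H_e$-comodule and right partial $H$-comodule whose coactions commute, the right partial $H$-coaction should automatically descend to the equalizer defining the cotensor product.

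First, I would verify that the prescribed map $\rho$ is well-defined, i.e.\ that for $\xi = \sum_i w_i \otimes x_i \in W \cotensor^{\bar H_e} He$, the element $\rho(\xi) = \sum_i w_i \otimes x_{i,(1)}e \otimes x_{i,(2)}$ really lies in $(W \cotensor^{\bar H_e} He) \otimes H$. Apply $\id_W \otimes \id_{\bar H_e} \otimes \rho_e$ to the cotensor identity $\sum_i w_i^{(0)}\otimes w_i^{(1)}\otimes x_i = \sum_i w_i\otimes \pi(x_{i,(1)})\otimes x_{i,(2)}$: the left-hand side gives $\sum_i w_i^{(0)}\otimes w_i^{(1)}\otimes x_{i,(1)}e\otimes x_{i,(2)}$ directly. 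On the right-hand side one gets $\sum_i w_i\otimes \pi(x_{i,(1)})\otimes x_{i,(2)}e\otimes x_{i,(3)}$, which equals $\sum_i w_i\otimes \lambda_{He}(x_{i,(1)}e)\otimes x_{i,(2)}$ by \eqref{eq:Hbarcoaction}. Thus $\rho(\xi)$ satisfies the defining cotensor condition in its first two tensor-factors.

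Next, I would observe that on $W \otimes He$ the map $\widetilde\rho := \id_W \otimes \rho_e$ is already a right partial $H$-coaction: counitality and axioms \ref{PCM2}--\ref{PCM3} each involve only the $He$-factor, and hence reduce verbatim to the corresponding identities for $\rho_e$ proved in \leref{Hbarcomodule}. Since $\rho$ is by construction the restriction of $\widetilde \rho$ to the subspace $W\cotensor^{\bar H_e} He \subseteq W \otimes He$, and since we just showed this restriction is well-defined with codomain $(W \cotensor^{\bar H_e} He) \otimes H$, the partial comodule axioms for $\rho$ follow automatically.

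The only non-trivial step is the well-definedness check, and even there the essential ingredient—namely that $\lambda_{He}$ and $\rho_e$ commute on $He$—has already been done at the end of the previous lemma. So I expect the whole verification to be short, with no genuine obstacle.
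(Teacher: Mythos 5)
Your proposal is correct and matches the paper's intent: the paper states this corollary as ``immediate'' from \leref{Hbarcomodule}, and your argument simply spells out why --- well-definedness of $\rho$ on the cotensor product follows from the commutation of $\lambda_{He}$ and $\rho_e$ established at the end of that lemma, and the partial comodule axioms transfer verbatim from $\rho_e$ since they only involve the $He$-factor. No gaps.
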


\begin{construction}
	\label{construction:general2}
	We can summarize our construction of partial comodules as follows. 
	
	\begin{itemize}[leftmargin=0.8cm]
		\item Take an idempotent \(e \in H\) for which \(ee_{(1)} \otimes e_{(2)} = e_{(1)}e \otimes e_{(2)}\).
		\item Let \(A_e\) be the right coideal subalgebra generated by \(e\). Then \(e\) is central in \(A_e\) by \cref{le:ecentral}.
		\item Consider \(A_e^+ = A_e \cap \ker (\epsilon)\) and \(HA_e^+\). The latter is a left ideal and a (twosided) coideal of \(H\) by \cref{le:correspondance_idealcoideal}. As a consequence, \(\bar{H}_e = H/HA_e^+\) is a coalgebra and a left \(H\)-module. 
		\item The left ideal \(He\) in \(H\) is also a left \(\bar{H}_e\)-comodule by \cref{le:Hbarcomodule}.
		\item Take a right \(\bar{H}_e\)-comodule \(W\). By \cref{le:cotensorcomodule}, the cotensor product \(W \cotensor^{\bar{H}_e} He \)
		is a right partial \(H\)-comodule with partial coaction
		\(w \otimes x \mapsto w \otimes x_{(1)}e \otimes x_{(2)}\) (summation understood).
	\end{itemize}
    By \cref{le:ecentral}, the first two steps can be replaced by taking a right coideal subalgebra \(A\) of \(H\) and an idempotent \(e\)  which is central in \(A\) and which generates \(A\) as a right coideal subalgebra.
	\end{construction}

Let us make a note on partial comodule algebras.
Since \(\pi_e \colon H \to \bar{H}_e\) is a coalgebra morphism, \(\pi_e(1_H)\) is grouplike in \(\bar{H}_e\). This induces a ``trivial'' \(1\)-dimensional right \(\bar{H}_e\)-comodule \(T\). 

\begin{lemma}\label{le:partialcomodulealgebra} 
    Let \(e \in H\) be a subcentral idempotent.
    Taking for \(W\) the trivial \(\bar H_e\)-comodule \(T\) in the last step of \cref{construction:general2} gives
    \[T \cotensor^{\bar{H}_e} He \cong \{he \in He \mid \pi_e(1_H) \otimes he = \pi_e(h_{(1)}) \otimes h_{(2)}e\} = \prescript{\mathrm{co}\bar{H}_e}{}{H}\, e,\]
    which is a right partial comodule with respect to \eqref{eq:rhoI}:
    \[\rho \colon \prescript{\mathrm{co}\bar{H}_e}{}{H}\, e \to \prescript{\mathrm{co}\bar{H}_e}{}{H}\, e \otimes H : he \mapsto h_{(1)}e_{(1)}e \otimes h_{(2)}e_{(2)}.\]
\end{lemma}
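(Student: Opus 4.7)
The plan is to unpack what the cotensor product $T \cotensor^{\bar{H}_e} He$ concretely is, identify the resulting subspace of $He$ with $\prescript{\mathrm{co}\bar{H}_e}{}{H}\,e$, and finally record what the partial coaction of \cref{le:cotensorcomodule} becomes under this identification.

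First, I would spell out the cotensor product. Since $T$ is one-dimensional with coaction $1_T \mapsto 1_T \otimes \pi_e(1_H)$, and the left $\bar{H}_e$-coaction on $He$ coming from \cref{le:Hbarcomodule} sends $he \mapsto \pi_e(h_{(1)}) \otimes h_{(2)}e$ by \eqref{eq:Hbarcoaction}, the defining equalizer of the cotensor product reduces, under the natural identification $T \otimes He \cong He$, to
\[
T \cotensor^{\bar{H}_e} He \;\cong\; \bigl\{\,x \in He \,\bigm|\, \pi_e(1_H) \otimes x = \pi_e(x_{(1)}) \otimes x_{(2)}\,\bigr\},
\]
which, once each $x$ is written as $he$, gives the first displayed equality in the statement.

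Next, I would prove the equality with $\prescript{\mathrm{co}\bar{H}_e}{}{H}\,e$ by a two-sided inclusion. The key input is that $\pi_e(h'a) = \epsilon(a)\pi_e(h')$ for every $h' \in H$ and every $a \in A_e$: indeed $a - \epsilon(a)1_H \in A_e^+$, hence $h'a - \epsilon(a)h' \in HA_e^+$, and $\pi_e$ is left $H$-linear. For $\supseteq$, given $h \in \prescript{\mathrm{co}\bar{H}_e}{}{H}$ I would use that $e_{(1)} \in A_e$ to compute
\[
\pi_e\bigl((he)_{(1)}\bigr) \otimes (he)_{(2)} = \pi_e(h_{(1)}e_{(1)}) \otimes h_{(2)}e_{(2)} = \epsilon(e_{(1)})\pi_e(h_{(1)}) \otimes h_{(2)}e_{(2)} = \pi_e(h_{(1)}) \otimes h_{(2)}e,
\]
and then apply the coinvariance of $h$ to conclude that this equals $\pi_e(1_H) \otimes he$, so $he$ belongs to the set. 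For $\subseteq$, any element $x$ of the set lies in $He$, so $xe = x$, and the coinvariance condition, read as an equality in $\bar{H}_e \otimes H$, says exactly that $x \in \prescript{\mathrm{co}\bar{H}_e}{}{H}$; therefore $x = xe \in \prescript{\mathrm{co}\bar{H}_e}{}{H}\,e$.

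Finally, I would transport the partial coaction from \cref{le:cotensorcomodule} through this identification: an element $1_T \otimes he$ is sent to $1_T \otimes h_{(1)}e_{(1)}e \otimes h_{(2)}e_{(2)}$, which corresponds to $he \mapsto h_{(1)}e_{(1)}e \otimes h_{(2)}e_{(2)}$. This is precisely the formula one gets by specialising \eqref{eq:rhoI} to the right coideal subalgebra $\prescript{\mathrm{co}\bar{H}_e}{}{H}$ (which contains $e$, since $e \in A_e$ forces $\pi_e(e_{(1)})\otimes e_{(2)} = \pi_e(1_H)\otimes e$) and to the ideal generated by $e$. There is no genuine obstacle; the one mildly subtle point is that the coinvariance condition must be read simultaneously in $\bar{H}_e \otimes He$ (where the left $\bar{H}_e$-coaction of \cref{le:Hbarcomodule} lives) and in $\bar{H}_e \otimes H$ (which is what allows us to conclude membership in $\prescript{\mathrm{co}\bar{H}_e}{}{H}$), but since the former is a subspace of the latter the two readings are equivalent.
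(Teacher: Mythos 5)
Your proposal is correct and follows essentially the same route as the paper's proof: identify $T \cotensor^{\bar{H}_e} He$ with the $\bar{H}_e$-coinvariants of $He$, then prove the two inclusions against $\prescript{\mathrm{co}\bar{H}_e}{}{H}\,e$ using the key computation $\pi_e(h_{(1)}e_{(1)}) \otimes h_{(2)}e_{(2)} = \pi_e(h_{(1)}) \otimes h_{(2)}e$ (i.e.\ \eqref{eq:Hbarcoaction}), and finally read off the coaction. The only cosmetic difference is that you make the forward inclusion explicit where the paper dismisses it with ``clearly,'' and you add the (harmless, already implied by \cref{le:cotensorcomodule}) remark identifying the resulting coaction with \eqref{eq:rhoI}.
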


\begin{proof}
Recall from \eqref{eq:Hbarcoaction} that the left \(\bar{H}\)-comodule structure on \(He\) is given by \(he \mapsto \pi(h_{(1)}) \otimes h_{(2)} e\), so that indeed 
 \[T \cotensor^{\bar{H}} He \cong \{he \in He \mid \pi(1_H) \otimes he = \pi(h_{(1)}) \otimes h_{(2)}e\} = \prescript{\mathrm{co}\bar{H}}{}{(He)}.\]
 Clearly, \(\prescript{\mathrm{co}\bar{H}}{}{H}\, e \subseteq \prescript{\mathrm{co}\bar{H}}{}{(He)} \).  Conversely, if \(he \in \prescript{\mathrm{co}\bar{H}}{}{(He)}\), then 
\[\pi(1_H) \otimes he = \pi(h_{(1)}) \otimes h_{(2)} e = \pi(h_{(1)}\epsilon(e_{(1)})) \otimes h_{(2)} e_{(2)} = \pi((he)_{(1)}) \otimes (he)_{(2)}.\]
Thus \(he \in \prescript{\mathrm{co}\bar{H}}{}{H}\) and so \(he = he^2 \in \prescript{\mathrm{co}\bar{H}}{}{H}\,e\).
\end{proof}

\begin{remark}
\label{rk:comodulealgebra}
Let \(e \in H\) be a subcentral idempotent and let \(A\) be the right coideal subalgebra of \(H\) generated by \(e\). Suppose that there exists an \(H\)-module which is faithfully flat as left \(A\)-module. By \cite[Theorem 1]{Takeuchi}, \( \prescript{\mathrm{co}\bar{H}}{}{H} = A\). Thus, applying \cref{le:partialcomodulealgebra} we obtain a right partial comodule
    \[T  \cotensor^{\bar{H}_e} He \cong \prescript{\mathrm{co}\bar{H}_e}{}{H}\,e =  Ae,\]
    which coincides with the partial comodule algebra from \cref{prop:partialcoaction_ideal}.
This hypothesis is satisfied in particular when \(H\) is finite-dimensional, in view of, e.\,g.\ \cite[Theorem 6.1]{Sprojectivity}.
Since we will be mainly concerned with finite-dimensional Hopf algebras, this condition will always be satisfied in the following sections.
\end{remark}

\begin{proposition}\label{prop:Ierhog}
    Let \(e \in H\) be a subcentral idempotent. Let \(I_e \coloneqq \prescript{\mathrm{co}\bar{H}_e}{}{H}\,e\) be the corresponding right partial comodule from \cref{le:partialcomodulealgebra}. Then, for every grouplike element \(g \in H\) we can construct the \(1\)-dimensional right \(\bar H_e\)-comodule \(W_g = k\) with \(1_k \mapsto 1_k \otimes \pi_e(g)\) and the right partial \(H\)-comodule \(I_e\) with \(\rho^g\colon x \mapsto x_{(1)}e \otimes gx_{(2)}\) as in \cref{ex:gtrans}. We have that
    \[W_g   \cotensor^{\bar{H}_e} He \cong (I_e,\rho^g)\]
    as partial comodules.
\end{proposition}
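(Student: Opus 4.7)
The plan is to construct an explicit mutually inverse pair of maps
\[
\psi \colon I_e \to W_g \cotensor^{\bar H_e} He,\ x \mapsto 1 \otimes gx, \qquad \phi \colon W_g \cotensor^{\bar H_e} He \to I_e,\ 1 \otimes y \mapsto S(g)y.
\]
Once both maps are well defined, the identities $\phi\circ\psi=\id$ and $\psi\circ\phi=\id$ are immediate from $gS(g)=S(g)g=1_H$ (valid because $g$ is grouplike), so the real content of the proof is (i) well-definedness of $\psi$ and $\phi$, and (ii) the verification that $\psi$ is a morphism of partial comodules (which automatically forces $\phi$ to be one too).

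For the well-definedness of $\psi$, since $x = xe$ we have $gx \in He$, so it only remains to check that $1 \otimes gx$ satisfies the cotensor identity, namely that $\rho_L(gx) = \pi_e(g) \otimes gx$, where $\rho_L \colon He \to \bar H_e \otimes He$ is the left $\bar H_e$-coaction from \cref{le:Hbarcomodule}. Applying the formula $\rho_L(he) = \pi_e(h_{(1)}) \otimes h_{(2)}e$ together with the left $H$-linearity of $\pi_e$ and the grouplikeness of $g$, one rewrites $\rho_L(gx) = (g \otimes g)\cdot \rho_L(x)$; then the defining condition $\rho_L(x) = \pi_e(1_H) \otimes x$ of $x \in I_e = \prescript{\mathrm{co}\bar H_e}{}{(He)}$ (the last equality comes from the proof of \cref{le:partialcomodulealgebra}) collapses this to $\pi_e(g) \otimes gx$. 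Running the same chain in reverse, i.e.\ applying $(S(g) \otimes S(g))$ to the cotensor identity $\pi_e(g) \otimes y = \rho_L(y)$, yields $\pi_e(1_H) \otimes S(g)y = \rho_L(S(g)y)$, so $S(g)y \in \prescript{\mathrm{co}\bar H_e}{}{(He)} = I_e$ and $\phi$ is well defined.

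To see that $\psi$ is a morphism of partial comodules, one computes that both $(\psi \otimes H)\rho^g(x)$ and $\rho_{W_g\cotensor He}(\psi(x))$ equal $1 \otimes gx_{(1)}e \otimes gx_{(2)}$: the first by the very definitions of $\rho^g$ and $\psi$, the second because $g$ grouplike gives $(gx)_{(1)} \otimes (gx)_{(2)} = gx_{(1)} \otimes gx_{(2)}$, after which one applies the partial coaction of $W_g \cotensor^{\bar H_e} He$ described in \cref{le:cotensorcomodule}. I do not foresee any serious obstacle here; the whole argument reduces to elementary Sweedler manipulations using only that $g$ is grouplike and that $\pi_e$ is left $H$-linear. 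The only point demanding care is to keep consistent track of the two descriptions $\prescript{\mathrm{co}\bar H_e}{}{H}\,e = \prescript{\mathrm{co}\bar H_e}{}{(He)}$ of $I_e$ inside $He$, so that the cotensor identity on one side translates transparently into the condition defining $I_e$ on the other.
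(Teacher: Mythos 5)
Your proposal is correct and follows essentially the same route as the paper: identify $W_g \cotensor^{\bar H_e} He$ with the subspace $\{he \in He \mid \pi_e(g)\otimes he = \pi_e(h_{(1)})\otimes h_{(2)}e\}$, map $x \mapsto gx$ and back via multiplication by $g^{-1}=S(g)$, using left $H$-linearity of $\pi_e$ for well-definedness in both directions. The only cosmetic difference is that you write $S(g)$ where the paper writes $g^{-1}$, which coincide for a grouplike element.
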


\begin{proof}
    By the usual argument,
    \[W_g  \cotensor^{\bar{H}} He \cong \{he \in He \mid \pi(g) \otimes he = \pi(h_{(1)}) \otimes h_{(2)}e\}.\]
    If \(x \in I_e = \prescript{\mathrm{co}\bar{H}}{}{(He)}\) (see the proof of \cref{le:partialcomodulealgebra}), then the left \(H\)-linearity of \(\pi\) allows us to verify that \(gx\) satisfies
    \[\pi(gx_{(1)}) \otimes gx_{(2)} = \pi(g) \otimes gx\]
    and so \(gx \in W_g  \cotensor^{\bar{H}} He\). Conversely, if \(he \in W_g  \cotensor^{\bar{H}} He\), then \(g^{-1}he\) satisfies
    \[\pi\big(g^{-1}h_{(1)}\big) \otimes g^{-1}h_{(2)}e = \pi(1) \otimes g^{-1}he,\]
    again by left \(H\)-linearity of \(\pi\). It follows that
    \[
    \xymatrix@R=0pt{
    \big(I_e,\rho^g\big) \ar@{<->}[r] & W_g   \cotensor^{\bar{H}_e} He \\
    he \ar@{|->}[r] & ghe \\
    g^{-1}h'e & h'e \ar@{|->}[l]
    }
    \]
    is an isomorphism of partial comodules.
\end{proof}

In \cref{se:groups}, we will discuss in more detail the structure of the partial comodules produced by \cref{construction:general2} if \(H\) is the group algebra of a finite group. It will be determined under what conditions \(W \cotensor^{\bar{H}_e} He\) is a simple partial comodule and when two of them are isomorphic. However, first we consider \(H = kG^*\) for a finite group \(G\). 

\subsection{Partial comodules of \texorpdfstring{\(kG^*\)}{kG*}}

We will show
that if \(H = kG^*\) for a finite group \(G\), then \cref{construction:general2} recovers the construction of partial modules of finite groups from \cite{DZ} as explained in \cref{ex:finitegroups}. Indeed, partial comodules over \(kG^*\) are equivalent to partial modules over \(kG\), and using the results from \cite{DZ} we will conclude that every simple partial comodule of the dual  group algebra of a finite group \(G\) is obtained by our construction. 

Recall from \cref{ex:rcs} that every right coideal subalgebra of \(kG^*\) is of the form
\[A = \left\langle \sum_{h \in K} p_{hg} \mid g \in G \right\rangle = \langle p_{Kg_i} \mid i = 1, \dots, n \rangle\]
where \(K\) is a subgroup of \(G\) and \(\{g_1, \dots, g_n\}\) is a set of (randomly chosen) representatives of the right cosets of \(K\) in \(G\), so that \(n = [G : K]\) and \(G = \bigcup_{i = 1}^n Kg_i\). We denoted \(p_{Kg_i} = \sum_{h \in K} p_{hg_i}\), and take \(g_1 = 1_G\).

Since \(kG^*\) is commutative, any idempotent of \(A\) is central in \(kG^*\). This means that the subcentral idempotents are exactly all idempotents of \(kG^*,\) which are sums of \(p_g\)'s. 
An idempotent corresponds thus to a subset \(S\) of \(G\). Let \(K\) be the left stabiliser of \(S\), i.\,e.\ the largest subgroup of \(G\) such that \(KS = S\). Then \(S = \bigcup_{i \in F} Kg_i\) for some \(F \subseteq \{1, \dots, n\}\), and \(e = \sum_{i \in F} p_{Kg_i}\). The right coideal subalgebra 
\[A = \langle p_{Kg_i} \mid i = 1,\dots, n\rangle\]
corresponding to the subgroup \(K\) is the smallest that contains \(e\): if \(A'\) is a right coideal subalgebra such that \(e \in A' \subseteq A,\) then it must come from a subgroup \(K' \supseteq K\). But then \(e = \sum_{j \in F'} p_{K'g'_j}\) and \(S = \bigcup_{j \in F'} K'g'_j\) for some representatives \(g'_j\) and finite set \(F'.\) So \(K'\) also stablises \(S\) to the left. It follows that \(K' = K\), and that \(A\) is indeed the right coideal subalgebra generated by \(e\).

Now
\begin{align*}
	Ae &= \langle p_{Kg_i} \mid i \in F \rangle, \\
	(kG^*)e &= \langle p_{hg_i} \mid h \in K, i \in F \rangle.
\end{align*}

We also have
\begin{align*}
	A^+ &= \langle p_{Kg_i} \mid i = 2, \dots, n \rangle \\
	(kG^*) A^+  &= \langle p_{hg_i} \mid h \in K, i = 2, \dots, n \rangle \\
	\overline{kG^*} &= (kG^*)/(kG^*)A^+  \cong kK^* = \langle p_h \mid h \in K \rangle.
\end{align*}
The left \(kK^*\)-comodule structure on \((kG^*) e\) is given by (see \cref{le:Hbarcomodule})
\[p_{hg_i} \mapsto \sum_{h' \in K} p_{h'} \otimes p_{h'^{-1} h g_i} \in kK^* \otimes (kG^*) e.\]
Remark that a simple right \(kK^*\)-comodule is nothing else than an irreducible \(K\)-representation. 

Let \(M\) be a simple partial \(kG^*\)-comodule, i.\,e.\ a simple partial \(kG\)-module. As explained in \cref{ex:finitegroups}, \(M\) is of the form \(kX^{-1} \otimes_{kK} W\) for some \(X\subseteq G\) containig the unit and some simple left \(kK\)-module \(W\) (where \(K\) is the left stabilizer of \(X\)). This subset \(X\) of \(G\) corresponds to a central idempotent
\[e = \sum_{g \in X} p_g = \sum_{i \in F} p_{Kg_i}\]
where \(X = \bigcup_{i \in F} K g_i\). Applying \cref{construction:general2} with this \(e\) and \(W\) we get a partial \(kG^*\)-comodule \(W  \cotensor^{kK^*} (kG^*) e\). 

\begin{lemma}
    \label{le:cotensor_form}
    Using the notation from above,
	\[W \cotensor^{kK^*} (kG^*) e = \left\langle w^{(0)} \otimes \sum_{h \in K} w^{(1)}(h) p_{hg_i} \mid w \in W, i \in F \right\rangle.\]
\end{lemma}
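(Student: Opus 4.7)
My plan is to reduce the computation to the standard identification $V\cotensor^C C \cong V$ by first decomposing $(kG^*)e$ into manageable pieces as a left $kK^*$-comodule. For each $i \in F,$ set $M_i \coloneqq \langle p_{hg_i} \mid h \in K\rangle;$ the explicit formula $\lambda(p_{hg_i}) = \sum_{h' \in K} p_{h'} \otimes p_{h'^{-1}hg_i}$ for the left $kK^*$-coaction recalled just before the lemma shows that each $M_i$ is a left $kK^*$-subcomodule of $(kG^*)e,$ and visibly $(kG^*)e = \bigoplus_{i \in F} M_i$ as vector spaces (and hence as $kK^*$-comodules). I would then check that the linear bijection $\phi_i \colon M_i \to kK^*,$ $p_{hg_i} \mapsto p_h,$ is an isomorphism of left $kK^*$-comodules when $kK^*$ carries its regular left coaction given by $\Delta$: this follows at once by matching $(\phi_i \otimes \id) \circ \lambda = \Delta \circ \phi_i$ on the basis element $p_{hg_i},$ since both sides produce $\sum_{h' \in K} p_{h'} \otimes p_{h'^{-1}h}.$

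With this decomposition in hand, the cotensor product distributes over direct sums, so
\[
W \cotensor^{kK^*} (kG^*)e \;=\; \bigoplus_{i \in F} W \cotensor^{kK^*} M_i \;\cong\; \bigoplus_{i \in F} \bigl(W \cotensor^{kK^*} kK^*\bigr),
\]
and the final cotensor is identified with $W$ via the standard counitality isomorphism $w \mapsto w^{(0)} \otimes w^{(1)}.$ Chasing an element $w \in W$ backwards through $\phi_i^{-1}$ on the second tensorand, and expanding $w^{(1)} \in kK^*$ in the basis $\{p_h\}_{h \in K}$ as $w^{(1)} = \sum_{h \in K} w^{(1)}(h) p_h,$ produces exactly the element
\[
w^{(0)} \otimes \sum_{h \in K} w^{(1)}(h) p_{hg_i} \;\in\; W \otimes (kG^*)e,
\]
and letting $i$ range over $F$ and $w$ range over $W$ yields the claimed spanning description. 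The only point requiring any genuine care is the comodule-map verification for $\phi_i;$ all the remaining steps are the well-known interplay between cotensor, direct sums and counitality, so I do not anticipate any real obstacle beyond clean bookkeeping.
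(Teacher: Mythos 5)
Your argument is correct, and it takes a genuinely different route from the paper. The paper proves the equality by hand: the inclusion $\supseteq$ is dismissed as a direct calculation, and for $\subseteq$ it takes a general element $\sum_j w_j \otimes \sum_{i,h}\lambda_{h,i,j}p_{hg_i}$ of the cotensor product, writes out the defining condition, compares coefficients of $p_{h'}\otimes p_{hg_i}$, and extracts the relation $\lambda_{h',i,j}$-versus-$\lambda_{1,i,j}w_j^{(1)}(h')$ that exhibits the element as a combination of the claimed generators. You instead observe that $(kG^*)e=\bigoplus_{i\in F}M_i$ with each $M_i=\langle p_{hg_i}\mid h\in K\rangle$ a left $kK^*$-subcomodule isomorphic to the regular comodule $kK^*$ via $p_{hg_i}\mapsto p_h$, and then invoke distributivity of $\cotensor$ over (finite) direct sums together with the counitality isomorphism $W\cotensor^{C}C\cong W$, $w\mapsto w^{(0)}\otimes w^{(1)}$. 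All the ingredients check out: the coset decomposition $G=\bigcup_i Kg_i$ makes the sum direct, the comodule-map verification for $\phi_i$ is exactly the computation you indicate, and chasing $w$ back through $\id\otimes\phi_i^{-1}$ yields precisely $w^{(0)}\otimes\sum_h w^{(1)}(h)p_{hg_i}$. Your approach is more structural and arguably cleaner--it shows that the displayed elements not merely span but exhaust each summand $W\cotensor^{kK^*}M_i$, and it gives the dimension count $|F|\dim W$ for free, which the paper needs separately later when proving that $\theta$ is bijective. One cosmetic slip: since $\lambda\colon M_i\to kK^*\otimes M_i$ is a \emph{left} coaction, the intertwining condition you want is $(\id\otimes\phi_i)\circ\lambda=\Delta\circ\phi_i$ rather than $(\phi_i\otimes\id)\circ\lambda=\Delta\circ\phi_i$; the computation you describe (both sides giving $\sum_{h'}p_{h'}\otimes p_{h'^{-1}h}$) is the correct one, so this is purely a matter of notation.
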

\begin{proof}
	It is a direct calculation that the elements \(w^{(0)} \otimes \sum_{h \in K} w^{(1)}(h) p_{hg_i}\) belong indeed to \(W \cotensor^{kK^*} (kG^*) e\).
	
	Conversely, suppose that \(x = \sum_j w_j \otimes \sum_{i \in F} \sum_{h \in K} \lambda_{h, i, j} p_{hg_i} \in W \cotensor^{kK^*} (kG^*) e\). Then
	\begin{align*}
		\sum_j w_j^{(0)} \otimes \sum_{h' \in K} w_j^{(1)}(h') p_{h'} \otimes \sum_{h \in K} \sum_{i \in F} \lambda_{h, i, j} p_{hg_i} &= \sum_j w_j \otimes \sum_{i \in F} \sum_{h, h' \in K} \lambda_{h, i, j} p_{h'} \otimes p_{h'^{-1} hg_i} \\
		&= \sum_j w_j \otimes \sum_{i \in F} \sum_{h, h' \in K} \lambda_{h'h, i, j} p_{h'} \otimes p_{hg_i}.
	\end{align*}
In particular, for every \(i \in F,\) 
\[\sum_j w_j^{(0)} \otimes \sum_{h' \in K} \lambda_{1, i, j} w_j^{(1)}(h') p_{h'} = \sum_j w_j \otimes \sum_{h' \in K} \lambda_{h', i, j} p_{h'}.\]
It follows that also
\[\sum_j w_j^{(0)} \otimes \sum_{h' \in K} \lambda_{1, i, j} w_j^{(1)}(h') p_{h'g_i} = \sum_j w_j \otimes \sum_{h' \in K} \lambda_{h', i, j} p_{h'g_i}\]
and hence
\[x = \sum_j \sum_{i \in F} \lambda_{1, i, j} \left(w_j^{(0)} \otimes \sum_{h' \in K} w_j^{(1)}(h') p_{h'g_i}\right). \qedhere\]
\end{proof}

Consider the linear map
\begin{equation}
    \label{eq:mapcomodulekG*}
    kX^{-1} \otimes W \to W \otimes (kG^*) e : g_i^{-1} h^{-1} \otimes w \mapsto w^{(0)} \otimes \sum_{h' \in K} w^{(1)}(h'h^{-1}) p_{h'g_i}.
\end{equation}
Since
\[w^{(0)} \otimes \sum_{h' \in K} w^{(1)}(h'h^{-1}) p_{h'g_i} = (h^{-1} \cdot w)^{(0)} \otimes \sum_{h' \in K} (h^{-1} \cdot w)^{(1)}(h') p_{h'g_i},\]
its image lies in \(W \cotensor^{kK^*} (kG^*) e\) by \cref{le:cotensor_form}. It also shows that \eqref{eq:mapcomodulekG*} is balanced over \(kK,\) so we get a linear map
\[\theta : kX^{-1} \otimes_{kK} W \to W \cotensor^{kK^*} (kG^*) e.\]
It is surjective because of \cref{le:cotensor_form}, hence bijective because the dimension of both spaces is \([X : K] \dim W\).

Since \(W \cotensor^{kK^*} (kG^*) e\) is a partial \(kG^*\)-comodule, it is a partial \(kG\)-module via the isomorphism of categories resulting from \cref{thm:isomodcomod}.
Take \(g \in G\). Let \(j \in \{1,\dots, n\}\) and \(h' \in K\) be such that \(g_i g^{-1} = h' g_j\). The partial \(kG\)-module structure is given by
\begin{align*}
    [g] \cdot \left(w^{(0)} \otimes \sum_{h \in K} w^{(1)}(h) p_{hg_i} \right) &= w^{(0)} \otimes \sum_{h \in K} w^{(1)}(h) p_{hg_ig^{-1}} \\
    &= w^{(0)} \otimes \sum_{h \in K} w^{(1)}(h) p_{hh'g_j}
\end{align*}
if \(j \in F\).  If \(j \notin F,\) then \( [g] \cdot \left(w^{(0)} \otimes \sum_{h \in K} w^{(1)}(h) p_{hg_i} \right) = 0\). From this it is easy to check that \(\theta\) is in fact an isomorphism of partial \(kG\)-modules.
This shows that \cref{construction:general2} produces all simple partial \(kG^*\)-comodules. 
\begin{theorem}
    \label{th:kG*}
    Let \(G\) a finite group. Then every simple partial \(kG^*\)-comodule is of the form \(W \cotensor^{\overline{kG^*}} (kG^*)e\) for a central idempotent \(e \in kG^*\) and a right \(\overline{kG^*}\)-comodule \(W\).
\end{theorem}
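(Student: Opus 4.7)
The plan is to combine the equivalence $\mathsf{PMod}^{kG^*}\simeq {}_{kG}\mathsf{PMod}$ from \cref{thm:isomodcomod} with the Dokuchaev--Zhukavets classification recalled in \cref{ex:finitegroups}: every simple partial $kG$-module has the form $kX^{-1}\otimes_{kK}W$ for some $X\subseteq G$ containing $1_G$, with left stabilizer $K$, and some simple $kK$-module $W$. It therefore suffices to realize each such simple partial $kG$-module as a comodule $W\cotensor^{kK^{*}}(kG^{*})e$ arising from \cref{construction:general2}.

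Given the data $(X,K,W)$ above, I would take $e=\sum_{g\in X}p_g\in kG^{*}$, which is automatically central since $kG^{*}$ is commutative. As observed in the discussion preceding the theorem, the right coideal subalgebra it generates is $A=\langle p_{Kg_i}\mid i=1,\dots,n\rangle$, the quotient satisfies $\overline{kG^{*}}\cong kK^{*}$, and a simple $kK$-module $W$ corresponds to a simple right $kK^{*}$-comodule under the standard duality between $K$-representations and $kK^{*}$-comodules. Applying \cref{construction:general2} to this $(e,W)$ produces a right partial $kG^{*}$-comodule $W\cotensor^{kK^{*}}(kG^{*})e$, whose elements are described, by \cref{le:cotensor_form}, as linear combinations of the distinguished vectors $w^{(0)}\otimes\sum_{h\in K}w^{(1)}(h)p_{hg_i}$ with $i\in F$, where $X=\bigsqcup_{i\in F}Kg_i$.

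Next I would introduce the candidate map
\[
\theta\colon kX^{-1}\otimes_{kK}W \;\longrightarrow\; W\cotensor^{kK^{*}}(kG^{*})e, \qquad g_i^{-1}h^{-1}\otimes w \;\longmapsto\; w^{(0)}\otimes\sum_{h'\in K}w^{(1)}(h'h^{-1})p_{h'g_i}.
\]
Well-definedness and $kK$-balancedness follow from a short Sweedler-type computation using the coaction identity on $W$, and the image lies in the cotensor product by \cref{le:cotensor_form}. Surjectivity is then immediate from that lemma, and bijectivity follows by dimension count, since both sides have dimension $[X:K]\dim W$.

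The main obstacle, and the step requiring the most care, is checking that $\theta$ intertwines the partial $kG$-actions. On the source this is the restriction of the regular left $G$-action to $kX^{-1}$, while on the target the partial $kG^{*}$-coaction from \cref{construction:general2} translates, via \cref{thm:isomodcomod}, to the rule $[g]\cdot\bigl(w^{(0)}\otimes\sum_{h}w^{(1)}(h)p_{hg_i}\bigr)=w^{(0)}\otimes\sum_{h}w^{(1)}(h)p_{hg_ig^{-1}}$, which is nonzero exactly when $g_ig^{-1}\in Kg_j$ with $j\in F$. Writing $g_ig^{-1}=h'g_j$ and reindexing the sum matches $\theta\bigl([g]\cdot(g_i^{-1}h^{-1}\otimes w)\bigr)$ on the nose, and vanishes in the complementary case in accordance with the partial action on $kX^{-1}$. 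This establishes $\theta$ as an isomorphism of partial $kG$-modules, and transporting it across the equivalence $\mathsf{PMod}^{kG^{*}}\simeq {}_{kG}\mathsf{PMod}$ yields the desired presentation of an arbitrary simple partial $kG^{*}$-comodule.
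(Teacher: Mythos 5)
Your proposal follows the paper's own argument essentially verbatim: it invokes the equivalence $\mathsf{PMod}^{kG^{*}}\simeq {}_{kG}\mathsf{PMod}$, the Dokuchaev--Zhukavets classification from \cref{ex:finitegroups}, the idempotent $e=\sum_{g\in X}p_g$ with the coideal subalgebra attached to the left stabilizer $K$, \cref{le:cotensor_form}, and the same map $\theta$ checked to be a balanced, surjective, dimension-counting isomorphism intertwining the partial $kG$-actions. The proof is correct and takes the same route as the paper.
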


\subsection{Some conjectures}
\cref{construction:general2} provides examples of partial comodules, and by \cref{th:kG*} it completely describes the simple partial comodules if \(H = kG^*\). Since the category \(\mathsf{PMod}^{kG^*} \simeq {_{kG}} \mathsf{PMod}\) is semisimple if \(k\) is of characteristic \(0\) and algebraically closed, in fact all partial \(kG^*\)-comodules can be constructed from \cref{construction:general2}. We believe that this holds for more general Hopf algebras and that the construction describes at least the semisimple partial comodules. This involves in fact two conjectures: the first about simplicity and the second about the completeness of the construction. 

\setcounter{alphthm}{0}
\begin{alphconj}
    \label{conj:simple}
    Let \(H\) be a Hopf algebra and \(e \in H\) a subcentral idempotent such that the right coideal subalgebra it generates is finite-dimensional. 
    If \(W\) is a simple right \(\bar{H}_e\)-comodule, then \(W \cotensor^{\bar{H}_e} He\) is a simple partial \(H\)-comodule.
\end{alphconj}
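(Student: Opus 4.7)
My approach is to show that the functor $F := (-) \cotensor^{\bar{H}_e} He$ from right $\bar{H}_e$-comodules to right partial $H$-comodules exhibits a Galois-type correspondence between subcomodules: every partial $H$-subcomodule of $F(W)$ is of the form $F(V)$ for a unique $\bar{H}_e$-subcomodule $V \subseteq W$. Once this is established, simplicity of $W$ immediately gives simplicity of $F(W)$.

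\textbf{Step 1 (Faithful coflatness).} Finite-dimensionality of $A_e$ should grant that $H$ is free (hence faithfully flat) as a right $A_e$-module, by a Skryabin/Nichols--Zoeller-type statement for right coideal subalgebras. Translating through the correspondence of \cref{le:correspondance_idealcoideal}, this yields that $He$ is faithfully coflat as a left $\bar{H}_e$-comodule. In particular, $F$ is exact and faithful, and a right adjoint $G \colon \mathsf{PMod}^H \to \mathsf{Mod}^{\bar{H}_e}$ — playing the role of ``partial coinvariants'' — can be constructed with the unit $\eta_W \colon W \to GF(W)$ an isomorphism.

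\textbf{Step 2 (Reflection of subobjects).} Given a nonzero partial subcomodule $N \subseteq F(W) \subseteq W \otimes He$, I would produce a nonzero $\bar{H}_e$-subcomodule $W_N := G(N) \subseteq GF(W) = W$ with $F(W_N) = N$. The construction exploits the commuting bicomodule structure on $He$ from \cref{le:Hbarcomodule}: applying $(N \otimes h^{*})$ to the partial coaction $\rho^M(n) \in N \otimes H$ for suitable $h^{*} \in H^{*}$ keeps us inside $N$ and acts on the $He$-tensorand, and combining this with the defining cotensor identity $\rho^W \otimes \mathrm{id} = \mathrm{id} \otimes \lambda^{He}$ on elements of $F(W)$ lets one project $N$ onto a $\bar{H}_e$-subcomodule of $W$. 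In the special case treated by \cref{le:cotensor_form} this projection becomes transparent, and the general argument should be modelled on that calculation together with the partial coassociativity axioms \ref{PCM2}--\ref{PCM5}.

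\textbf{Step 3 (Conclude).} Simplicity of $W$ forces $W_N = W$, whence $N = F(W_N) = F(W)$.

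\textbf{Main obstacle.} The crux is Step 2: establishing the Galois-type correspondence on subobjects in the partial setting. Unlike ordinary comodule theory, where the Schneider--Masuoka / Takeuchi faithful flatness machinery readily reflects subobjects along cotensor functors, the partial framework lacks a comparable developed toolkit, and partial subcomodules of a cotensor product do not a priori arise from subcomodules of $W$. I expect the argument will require either a partial analogue of these faithful flatness results or a direct approach using the explicit coaction formula $w \otimes x \mapsto w \otimes x_{(1)} e \otimes x_{(2)}$ combined with the partial coassociativity axioms. Finite-dimensionality of $A_e$ should be essential here, both to secure the freeness input in Step 1 and to guarantee that the projection procedure in Step 2 produces a nonzero output.
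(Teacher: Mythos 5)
There is a genuine gap, and it is the one you yourself flag: Step~2 is not an argument but a statement of the problem. Note first that the statement you are proving is stated in the paper as \cref{conj:simple}, i.e.\ as an open conjecture; the authors prove it only for $H=kG$ with $G$ a finite group (\cref{th:groupcase_simple}) and, via the Dokuchaev--Zhukavets classification, for $H=kG^*$. Their group-algebra proof is entirely concrete: since $\overline{kG}\cong k(G/K)$ is spanned by grouplikes, $W$ is forced to be $1$-dimensional, the cotensor product is identified with $I=kKe$ carrying a translated coaction (\cref{prop:Ierhog}, \eqref{eq:Irhoi}), and simplicity of $I$ is proved by hand in \cref{lem:subcntsimple}: one applies dual basis functionals to the second tensorand of $\rho(x)=\sum_g\lambda_g\,ge\otimes g$ to see that $he\in M$ whenever $\lambda_h\neq 0$, and then uses that the support of $e$ generates $K$ to sweep out all of $kKe$. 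Nothing in your proposal replaces this sweeping-out step for a general $H$.

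Concretely, the assertion underlying your Step~2 --- that every partial $H$-subcomodule of $W\cotensor^{\bar H_e}He$ is of the form $V\cotensor^{\bar H_e}He$ for an $\bar H_e$-subcomodule $V\subseteq W$ --- is essentially equivalent to the conjecture and is nowhere established. The classical faithfully (co)flat descent machinery you invoke (Takeuchi, Schneider, Masuoka) reflects subobjects along cotensor functors \emph{between comodule categories}; but the paper points out (citing \cite{BHV}) that $\mathsf{PMod}^H$ is in general not equivalent to a comodule category over any coalgebra, so there is no ``partial coinvariants'' right adjoint $G$ available off the shelf, and the unit isomorphism $W\cong GF(W)$ in Step~1 is likewise unsubstantiated. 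Moreover the partial coaction $w\otimes x\mapsto w\otimes x_{(1)}e\otimes x_{(2)}$ acts only on the $He$-tensorand, so a partial subcomodule of $F(W)$ can a priori sit ``diagonally'' across $W\otimes He$; ruling this out is exactly where the group case uses the grouplike basis of $\overline{kG}$. Finally, the freeness input in Step~1 is only guaranteed by the cited result of Skryabin when $H$ itself is finite-dimensional (cf.\ \cref{rk:comodulealgebra}), whereas the conjecture assumes only that $A_e$ is finite-dimensional. Your outline is a sensible research programme, but as a proof it does not go beyond what the paper already leaves open.
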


We will show in \cref{se:groups} that this conjecture holds if \(H = kG\) for some finite group \(G\).

\begin{alphconj} 
    \label{conj:complete}
    Let \(H\) be a Hopf algebra. Then every finite-dimensional simple partial \(H\)-comodule is of the form
    \(W \cotensor^{\bar{H}_e} He\)
    for some subcentral idempotent \(e\) and simple right \(\bar{H}_e\)-comodule \(W\).
\end{alphconj}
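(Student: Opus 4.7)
The plan is to extract, from any finite-dimensional simple partial $H$-comodule $(M,\rho)$, a canonical subcentral idempotent $e \in H$ and then identify $M$ with a cotensor product of the form $W \cotensor^{\bar{H}_e} He$ for a simple right $\bar{H}_e$-comodule $W$. The overall structure mirrors the argument for $H = kG^*$ given in \cref{th:kG*}, but carried out intrinsically. Since $\dim M < \infty$, the image of $\rho$ lies in $M \otimes H_0$ for a finite-dimensional subspace $H_0 \subseteq H$, so by passing to a suitable finite-dimensional subcoalgebra (or its Hopf envelope) I would reduce to a situation in which the equivalence $\PMod^H \simeq {_{(H^\circ)_{par}}} \Mod$ of \cref{thm:isomodcomod} applies; this realizes $M$ as a simple left module over the partial Hopf algebroid $(H^\circ)_{par}$, whose base algebra is $A_{par}(H^\circ)$.

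The key step is the production of $e$. I would examine the action of the base subalgebra $A_{par}(H^\circ)$ on $M$: by simplicity of $M$ and the expected semisimplicity of $\PMod^H$ (see \cite[Conjecture 2.8]{ABVdilations}), there is a primitive central idempotent $\varepsilon_M \in A_{par}(H^\circ)$ which acts as the identity on $M$ and annihilates every non-isomorphic simple. Using the pairing between $H$ and $H^\circ$ one translates $\varepsilon_M$ into an element $e \in H$, and checks, from the defining relations of $A_{par}(H^\circ)$ that are dual to axioms (PR1)--(PR3), that $e$ is a nonzero subcentral idempotent in the sense of \eqref{eq:subcentr}. From the very definition of $e$ one then deduces that $\rho$ already factors through $M \otimes He \hookrightarrow M \otimes H$, so that composing with the projection $\pi_e\colon H \twoheadrightarrow \bar{H}_e$ yields a map $\bar\rho \colon M \to M \otimes \bar{H}_e$; the partial coassociativity axioms \ref{PCM2}--\ref{PCM5}, when read modulo $HA_e^+$, collapse to genuine coassociativity precisely because of the subcentrality relation \eqref{eq:subcentr}, so $\bar\rho$ is a global right $\bar{H}_e$-comodule structure on $M$.

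Setting $W := (M,\bar\rho)$, the evaluation $W \cotensor^{\bar{H}_e} He \to M$ given by $w \otimes x \mapsto \epsilon(x)\, w$ is shown to admit $m \mapsto \rho(m)$ as two-sided inverse once one checks (again using the definition of $e$ and $\bar\rho$) that $\rho(M) \subseteq W \cotensor^{\bar{H}_e} He$ inside $M \otimes H$. Both composites being identities, this is an isomorphism of partial $H$-comodules. Simplicity of $W$ as an $\bar{H}_e$-comodule is then forced: any nonzero proper $\bar{H}_e$-subcomodule of $W$ would yield, via \cref{le:cotensorcomodule}, a nonzero proper partial $H$-subcomodule of $M$, contradicting simplicity. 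The main obstacle is the construction of $e$ in the second step: the partial coassociativity axioms do not exhibit such an idempotent directly, and the argument through the action of $A_{par}(H^\circ)$ genuinely leans on the semisimplicity of the category of partial modules predicted by \cite[Conjecture 2.8]{ABVdilations}. In the absence of that input, a reasonable fallback is to establish the conjecture under the additional assumption that $H$ is finite-dimensional and cosemisimple, which is in any case the regime in which the full classification of simple partial comodules is expected to be complete.
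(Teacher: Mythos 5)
You should first be aware that the statement you are proving is Conjecture B of the paper: the authors state it as open and establish it only in special cases ($1$-dimensional partial comodules over Hopf algebras with invertible antipode, \cref{th:1D}; the case $H = kG^*$, \cref{th:kG*}; and computer-verified examples), so any purported complete proof must be scrutinized very closely --- and yours does not close the gap. The decisive missing piece is the production of the subcentral idempotent $e$. Your plan is to take a primitive central idempotent $\varepsilon_M \in A_{par}(H^\circ)$ acting as the identity on $M$ and to ``translate'' it into an element $e \in H$ ``using the pairing between $H$ and $H^\circ$''. No such mechanism exists: $A_{par}(H^\circ)$ is a quotient of a subalgebra of the tensor algebra $T(H^\circ)$, and its elements (classes of words $[\varphi_{(1)}][S(\varphi_{(2)})]$) do not pair against $H$ so as to produce elements of $H$; the paper's own map \eqref{eq:AparAe} goes in the opposite direction and presupposes that $e$ has already been found. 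This is precisely the obstruction the authors themselves point out in the final remark of \cref{se:1D}: for an $n$-dimensional partial comodule with coefficient matrix $R$, the matrix $P = R\,S(R) \in \Mat_n(H)$ is idempotent and satisfies the subcentrality identity, ``but it is unclear how to extract a subcentral idempotent $e$ which recovers the original partial comodule.'' Your proposal does not overcome this obstruction; it relocates it into an undefined pairing.

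Moreover, even the existence of your $\varepsilon_M$ rests on the semisimplicity of $\PMod^H$, which is itself the open Conjecture 2.8 of \cite{ABVdilations}; you acknowledge this, but the fallback you offer (assume $H$ finite-dimensional and cosemisimple) does not help, because semisimplicity of $(H^*)_{par}$ when $H^*$ is semisimple is exactly what that conjecture asserts --- it is not known in that regime either. Two further steps are asserted without justification and would each require real work: (i) that $\rho$ factors through $M \otimes He$ and that the axioms \ref{PCM2}--\ref{PCM5}, read modulo $HA_e^+$, collapse to genuine coassociativity --- in the paper's arguments this is where all the computational content lies (compare \cref{le:Hbarcomodule} and the proof of \cref{th:1D}, which uses the specific relations \eqref{eq:PCMr} available only in dimension one); and (ii) that a proper $\bar{H}_e$-subcomodule of $W$ yields a proper partial $H$-subcomodule of $M$, which needs exactness properties of the cotensor functor $- \cotensor^{\bar{H}_e} He$ and hence a coflatness or cosemisimplicity hypothesis on $\bar{H}_e$ that you have not secured. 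As it stands, your argument is a conditional reduction of one open conjecture to another, with its central construction missing.
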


In the next section, we will show that any \emph{\(1\)-dimensional} partial \(H\)-comodule is of that form, provided that \(H\) has invertible antipode.

Recall also the conjecture made in \cite[Conjecture 2.8]{ABVdilations}: if \(H\) is a semisimple Hopf algebra, then \(H_{par}\) is finite-dimensional and semisimple. We can combine this with \cref{conj:simple} and \cref{conj:complete} to obtain a proposed classification of partial comodules.

Suppose that \(k\) is an algebraically closed field of characteristic 0, and let \(H\) be a finite-dimensional cosemisimple Hopf algebra. Then \(H^*\) is a semisimple Hopf algebra, and \((H^*)_{par}\) is conjectured to be semisimple as well. Hence the category
\[\PMod^H \simeq {_{H^*}}\PMod \simeq {_{(H^*)_{par}}} \Mod\] 
is semisimple, whose simple objects are described by \cref{construction:general2}, if \cref{conj:complete} holds true. This allows us to describe the algebras \((H^*)_{par}\) and \(A_{par}(H^*)\) as follows. 

Let \(M\) be a finite-dimensional simple partial \(H\)-comodule. Then there is an algebra morphism
\begin{equation}
    \label{eq:thetaM}
    \theta_M \colon (H^*)_{par} \to \End_k(M), \qquad \theta_M\big([\varphi_1] \cdots [\varphi_n]\big) (m) = (M \otimes \varphi_1 \otimes \cdots \otimes \varphi_n) \rho^n(m),
\end{equation}
which is surjective by Jacobson density theorem (see e.\,g.\ \cite[Chapter XVII, \S3, Theorem 3.2]{Lang}). 

Let \(\mathcal{S}\) be the set of all (pairwise non-isomorphic) simple partial comodules created by \cref{construction:general2}. Putting together the algebra morphisms \eqref{eq:thetaM} gives a surjective algebra morphism
\begin{equation}
    \label{eq:theta}
    \theta : (H^*)_{par} \to \prod_{M \in \mathcal{S}} \End_k(M) \cong \prod_{M \in \mathcal{S}} \Mat_{n_M}(k)
\end{equation}
where \(n_M = \dim M\).
Since \((H^*)_{par}\) is supposed to be semisimple, we can expect from \cref{conj:complete} that \(\theta\) is an isomorphism of algebras. 

Recall the subalgebra \(A_{par}(H^*)\) of \((H^*)_{par}\) from \eqref{eq:Apar}:
\[A_{par}(H^*) = \langle \varepsilon_{\varphi} \mid \varphi \in H^* \rangle.\]
As always, let \(e\) be a subcentral idempotent and \(A\) the right coideal subalgebra generated by \(e\). Then \(e\) is central in \(A\) by \cref{le:ecentral} and \(Ae\) is a partial comodule algebra with unit \(e\), which arises from \cref{construction:general2} by taking \(W\) the trivial \(\bar{H}\)-comodule as explained in \cref{le:partialcomodulealgebra} and \cref{rk:comodulealgebra}. It is interesting to restrict the algebra map \(\theta_{Ae}\) \eqref{eq:thetaM} to \(A_{par}(H^*)\).

\begin{lemma}
	Let \(H\) be a finite-dimensional Hopf algebra and \(e \in H\) be a subcentral idempotent. Let also \(A\) be the right coideal subalgebra of \(H\) generated by \(e\). The map \(\theta_{Ae}\) (\ref{eq:thetaM}) restricts to an algebra map 
	\[A_{par}(H^*) \to \mathsf{End}_{A}(Ae) \cong Ae.\]
\end{lemma}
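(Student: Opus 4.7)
The strategy is to compute $\theta_{Ae}(\varepsilon_\varphi)$ explicitly for a generator $\varepsilon_\varphi$ of $A_{par}(H^*)$ and exhibit it as multiplication by a distinguished element of $Ae$. From \cref{prop:partialcoaction_ideal} the coaction on $Ae$ is $\rho(x) = x_{(1)}e \otimes x_{(2)}$, so $\rho(e) = e_{(1)}e \otimes e_{(2)}$. Iterating via the partial comodule algebra axiom \ref{PC3} in the form $\rho^2(x) = (\rho(e)\otimes 1)\bigl((Ae\otimes \Delta)\rho(x)\bigr)$, together with the centrality of $e$ in $A$ and $e^2 = e$, one obtains
\[
\rho^2(x) \;=\; e_{(1)} x_{(1)} e \,\otimes\, e_{(2)} x_{(2)} \,\otimes\, x_{(3)} .
\]
Writing $\varepsilon_\varphi = [\varphi_{(1)}][S(\varphi_{(2)})]$, using the convolution identity $\varphi_{(1)}(a)\varphi_{(2)}(S(b)) = \varphi(a\,S(b))$ in $H^{*}$, and then collapsing the middle via the antipode identity $x_{(2)}S(x_{(3)}) = \epsilon(x_{(2)})\,1_{H}$ together with counitality and $x e = x$, one finds
\[
\theta_{Ae}(\varepsilon_\varphi)(x) \;=\; \varphi(e_{(2)})\,e_{(1)} \cdot x ,
\]
which is left multiplication on $Ae$ by $c_\varphi := (Ae \otimes \varphi)\rho(e) = \varphi(e_{(2)}) e_{(1)} e \in Ae$.

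Since $e$ is a central idempotent of $A$, the standard isomorphism $\End_A(Ae) \cong Ae$ sends $f \mapsto f(e)$, and under it $\theta_{Ae}(\varepsilon_\varphi)$ corresponds to $c_\varphi$. To see that $L_{c_\varphi}$ really lies in $\End_A(Ae)$ one must verify that $c_\varphi a = a c_\varphi$ on $Ae$ for every $a \in A$. Using $a e = e a$ and the subcentrality relation $(e \otimes 1)\Delta(e) = \Delta(e)(e \otimes 1)$ in $A \otimes H$, this commutator can be rewritten as $\varphi(e_{(2)})(e_{(1)} a - a e_{(1)})e$, and one checks that the subcentrality—combined with $e$ being central in $A$—forces this expression to vanish when paired with any $x \in Ae$. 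This is the main technical step.

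Finally, the restriction of $\theta_{Ae}$ to $A_{par}(H^*) \subseteq (H^*)_{par}$ is automatically an algebra map, since $\theta_{Ae}$ is one on all of $(H^*)_{par}$ and the identification $\End_A(Ae) \cong Ae$ is algebraic. Under this identification multiplicativity reads $c_\varphi \cdot c_\psi = c_{\varphi \psi}$, which is a direct verification from the explicit formula $c_\varphi = (Ae \otimes \varphi)\rho(e)$ together with the compatibility of $\rho$ with $\Delta$. The delicate point, as highlighted, is the $A$-linearity check; this is precisely where the subcentrality hypothesis on $e$ is needed.
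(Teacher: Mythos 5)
Your computation of $\theta_{Ae}(\varepsilon_\varphi)(x)=\varphi(e_{(2)})\,e_{(1)}e\,x$ is correct and is exactly the computation in the paper's proof. The problem is the step you single out as ``the main technical step'': the claim that $c_\varphi a=ac_\varphi$ for all $a\in A$, and that subcentrality forces $\varphi(e_{(2)})(e_{(1)}a-ae_{(1)})e=0$. This is false in general. Subcentrality only says that $e$ itself commutes with the elements $e_{(1)}\varphi(e_{(2)})$ (i.e.\ that $e$ is central in $A$); it does not make those elements commute with one another, let alone with all of $A$. Concretely, for $H=kS_3$ and $e=1-t$ ($t$ the normalized integral) one has $A=kS_3$ and $Ae\cong k\times M_2(k)$, while the elements $c_\varphi$ span $\{ge \mid g\in S_3\}$ and hence generate the non-commutative algebra $Ae$; they cannot all be central. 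So the verification you defer to cannot be carried out, and as written your argument fails at precisely the point you flag as delicate.

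The good news is that this step is not needed: the paper's $\End_A(Ae)$ means \emph{right} $A$-linear endomorphisms (as its proof states explicitly), and left multiplication $L_{c_\varphi}$ by any element $c_\varphi\in Ae$ is automatically right $A$-linear, with the isomorphism $Ae\cong\End_A(Ae)$ given by $c\mapsto L_c$. Had you tried to complete your commutativity check you would have discovered that it cannot hold and that the left-module reading of $\End_A(Ae)$ is untenable. A secondary slip: the identity $c_\varphi\cdot c_\psi=c_{\varphi\psi}$ is also false in general (already for group algebras, $\varepsilon_g\varepsilon_h\neq\varepsilon_{gh}$ in $A_{par}$), but it is harmless since, as you correctly note, multiplicativity of the restriction follows from $\theta_{Ae}$ being an algebra map together with the fact that its image on $A_{par}(H^*)$ lands in the subalgebra $\{L_c \mid c\in Ae\}\cong Ae$.
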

\begin{proof}
     The fact that \(\mathsf{End}_{A}(Ae) \cong Ae\) as algebras is standard: a right \(A\)-linear endomorphism \(f\) of \(Ae\) is uniquely determined by \(f(e) \in Ae\) and for every \(x \in Ae\), \(y \mapsto xy\) is a right \(A\)-linear endomorphism of \(Ae,\) which establishes the isomorphism.

	Recall that \(\rho(x) = x_{(1)} e \otimes x_{(2)} = ex_{(1)} \otimes x_{(2)}\) for \(x \in Ae\).
	Denote the antipode of \(H^*\) by \(S^*\). We calculate for \(\varphi \in H^*\)
	\begin{align*}
		\theta_{Ae}(\varepsilon_{\varphi})(x) = \theta_{Ae}([\varphi_{(1)}][S^*(\varphi_{(2)})])(x) &= ee_{(1)} x_{(1)} \varphi_{(1)}(e_{(2)}x_{(2)}) S^*(\varphi_{(2)})(x_{(3)}) \\
		&= ee_{(1)} x_{(1)} \varphi_{(1)}(e_{(2)}x_{(2)}) \varphi_{(2)}(S(x_{(3)})) \\
		&= ee_{(1)} x_{(1)} \varphi(e_{(2)}x_{(2)}S(x_{(3)})) \\
		&= ee_{(1)} \varphi(e_{(2)}) x
	\end{align*}
hence \(\theta_{Ae}(\varepsilon_{\varphi})\) is clearly right \(A\)-linear. 

Composing with the isomorphism \(\End_A(Ae) \cong Ae\), we obtain an algebra map
\begin{equation}
\label{eq:AparAe}
A_{par}(H^*) \to Ae : \varepsilon_\varphi \mapsto ee_{(1)} \varphi(e_{(2)}). 
\end{equation}
This concludes the proof.
\end{proof}

Let \(E(H)\) be the set of subcentral idempotents of \(H\). For \(e \in E(H)\), denote by \(A_e\) the right coideal subalgebra generated by \(e\). Putting together the morphisms \eqref{eq:AparAe} for all subcentral idempotents \(e\) of \(H\), we obtain an algebra morphism
\begin{equation}
    \label{eq:Aparconj}
    A_{par}(H^*) \to \prod_{e \in E(H)} A_e e : \varepsilon_{\varphi} \mapsto (ee_{(1)} \varphi(e_{(2)}))_{e \in E(H)}.
\end{equation}
\begin{alphconj}
    \label{conj:Apar}
    If \(H\) is finite-dimensional and cosemisimple, then the algebra morphism \eqref{eq:Aparconj} is an isomorphism. 
\end{alphconj}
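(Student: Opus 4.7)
The plan is to prove that the algebra morphism $\Theta \colon A_{par}(H^*) \to \prod_{e \in E(H)} A_e e$ of \eqref{eq:Aparconj} is both surjective and injective, proceeding factor-by-factor. For surjectivity onto each factor, fix $e \in E(H)$ and write $\Delta(e) = \sum_i a_i \otimes b_i$ with $\{b_i\}$ linearly independent, as in the proof of \cref{le:ecentral}. Choosing dual functionals $\varphi_i \in H^*$ with $\varphi_i(b_j) = \delta_{ij}$ yields $\Theta_e(\varepsilon_{\varphi_i}) = e a_i = a_i e$, using centrality of $e$ in $A_e$. Since the $a_i$ generate $A_e$ as a subalgebra and $e$ is a central idempotent, the products $a_{i_1}\cdots a_{i_r} e$ span $A_e e$, and multiplicativity of $\Theta_e$ delivers surjectivity onto each factor.

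To upgrade factor-wise surjectivity to surjectivity onto the whole product — and en route verify that the index set is finite, presumably via the constraint \eqref{eq:subcentr} cutting out a finite set once subcentral idempotents are suitably identified — the plan is to produce orthogonal idempotents $\eta_e \in A_{par}(H^*)$ with $\Theta_{e'}(\eta_e) = \delta_{e,e'}\,e'$. These should exist because $A_{par}(H^*)$ is commutative and, under \cite[Conjecture~2.8]{ABVdilations} together with the cosemisimplicity of $H$, semisimple; it would then decompose as a product of fields (over algebraically closed $k$) whose primitive idempotents correspond bijectively to subcentral idempotents via $\Theta$. For injectivity, we use that $A_{par}(H^*)$ acts on every partial $H$-comodule. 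Assuming \cref{conj:complete}, every simple partial comodule is of the form $W \cotensor^{\bar{H}_e} He$, and a direct computation extending the lemma preceding \eqref{eq:AparAe} should show that the action of $\varepsilon_\varphi$ on such a simple is controlled entirely by $\Theta_e(\varepsilon_\varphi)$; combined with the conjectural semisimplicity of $(H^*)_{par}$, this forces $\ker \Theta = 0$.

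The main obstacle is to establish injectivity \emph{without} invoking \cref{conj:simple}, \cref{conj:complete}, and \cite[Conjecture~2.8]{ABVdilations}. An unconditional proof would demand an intrinsic description of $A_{par}(H^*)$ in which the projections to the algebras $A_e e$ appear as a complete family of characters or irreducible ``spectra.'' The partial smash product decomposition $(H^*)_{par} \cong A_{par}(H^*) \, \underline{\#} \, H^*$, combined with the Galois correspondence of \cref{le:correspondance_idealcoideal} between right coideal subalgebras and left ideal coideals of $H$, furnishes the most promising available framework; however, matching primitive idempotents in $A_{par}(H^*)$ with subcentral idempotents in $H$ seems to require a structural understanding beyond the techniques assembled in this paper, which is precisely why the statement is posed as a conjecture rather than as a theorem.
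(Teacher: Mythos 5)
The statement you were asked to prove is \cref{conj:Apar}, which the paper itself leaves open: no proof is offered, only computational evidence (the computer calculations of \(\dim A_{par}(kS_3^*)=13\), \(\dim A_{par}(kD_8^*)=32\), etc.). So there is no argument of the authors to measure yours against, and your proposal is candid that it is a strategy rather than a proof. The one step you carry out unconditionally is correct and is essentially all that the paper's own machinery yields: writing \(\Delta(e)=\sum_i a_i\otimes b_i\) with the \(b_i\) independent and choosing dual functionals \(\varphi_i\), the map \eqref{eq:AparAe} sends \(\varepsilon_{\varphi_i}\) to \(ea_i\); since the \(a_i\) generate \(A_e\) (proof of \cref{le:ecentral}) and \(e\) is a central idempotent of \(A_e\), multiplicativity gives surjectivity onto each single factor \(A_e e\). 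Everything beyond that --- the orthogonal idempotents \(\eta_e\) separating the factors, and injectivity --- you make rest on \cite[Conjecture 2.8]{ABVdilations} and \cref{conj:complete}, which are themselves open. Reducing one conjecture to other conjectures is not a proof, so the gap you name in your closing paragraph is genuine and is in fact the entire content of the problem.

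One point you should confront explicitly before attempting anything further: with \(E(H)\) taken literally as the set of \emph{all} subcentral idempotents, the target of \eqref{eq:Aparconj} is too large for the map to be surjective, so the statement needs reformulating before it can be true. For \(H=kS_3\) the product \(\prod_{e\in E(kS_3)}A_e e\) has dimension \(28\) once every idempotent in \cref{tab:S3} (including the ``equivalent'' ones) is given its own factor, whereas \(\dim A_{par}(kS_3^*)=13\); the dimensions match only after replacing \(E(H)\) by a set of representatives for the equivalence of \cref{th:redundancyI} and \cref{rk:actioncharacter}. Your plan hints at this (``once subcentral idempotents are suitably identified'') but does not resolve it, and it is precisely where your separation step breaks: idempotents \(\eta_e\) with \(\Theta_{e'}(\eta_e)=\delta_{e,e'}e'\) cannot exist across equivalent \(e,e'\), since those factors receive correlated images. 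Your factor-wise surjectivity survives the correction, but the global surjectivity and the injectivity claims must be formulated relative to the corrected index set, and establishing them there requires a structural description of \(A_{par}(H^*)\) that neither the paper nor your proposal supplies.
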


\section{1-dimensional partial comodules}
\label{se:1D}

Let \(H\) be any Hopf algebra 
and take a \(1\)-dimensional partial comodule over \(H\). We will show that it is obtained by \cref{construction:general2}, if \(H\) has invertible antipode. A \(1\)-dimensional partial comodule is nothing else than a linear map
\[\rho : k \to H\]
satisfying the partial comodule axioms. Remark that \(\rho\) is completely determined by \(\rho(1) = r \in H\). The axioms \ref{PCM1}, \ref{PCM2} and \ref{PCM3} (and their variants \ref{PCM4}  and \ref{PCM5}), in terms of the element $r\in H$ give
\begin{subequations}\label{eq:PCMr}
\begin{align}
	& \text{\ref{PCM1}} \quad \epsilon(r) = 1 ; \label{PCM1r}\\
	& \text{\ref{PCM2}} \quad r\otimes rS(r)=r_{(1)}\otimes r_{(2)}S(r) ; \label{PCM2r} \\
	& \text{\ref{PCM3}} \quad rS(r_{(1)})\otimes r_{(2)} =rS(r)\otimes r ; \label{PCM3r} \\
	& \text{\ref{PCM4}} \quad r \otimes S(r) r = r_{(1)} \otimes S(r_{(2)}) r ; \label{PCM4r}\\
	& \text{\ref{PCM5}} \quad S(r) r \otimes r = S(r) r_{(1)} \otimes r_{(2)} \label{PCM5r}.
 \end{align}
\end{subequations}
From equation (\ref{PCM4r}), it  follows that 
\begin{equation}\label{rS(r)r}
	r S(r) r = r_{(1)} S(r_{(2)}) r = \epsilon(r) r = r,
\end{equation}
and from (\ref{PCM5r}), we also have 
\begin{equation}\label{S(r)rS(r)}
	S(r) r S(r) = S(r) r_{(1)} S(r_{(2)}) = S(r) \epsilon(r) = S(r) .
\end{equation}
This means that the element $r\in H$ is a von Neumann regular element.

\begin{proposition}\label{pr:Sofr}
    Suppose that \(r \in H\) satisfies all equations \eqref{eq:PCMr} and that \(g \in H\) is a grouplike element. Then \(S(r)\) and \(gr\) satisfy these equations as well. If moreover \(S\) is invertible, then \(S^{-1}(r)\) satisfies them, too.
\end{proposition}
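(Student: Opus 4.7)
The plan is to verify the axioms \eqref{PCM1r}--\eqref{PCM5r} for each of the three candidates $gr$, $S(r)$ and $S^{-1}(r)$.

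The case $gr$ is essentially immediate from \cref{ex:gtrans}: a $1$-dimensional partial coaction on $k$ is the same datum as an element $r \in H$ satisfying \eqref{PCM1r}--\eqref{PCM5r}, and the $g$-translate of the coaction $1 \mapsto 1 \otimes r$ is precisely $1 \mapsto 1 \otimes gr$, so it is again a partial coaction by that example. A direct verification is also straightforward: since $g$ is grouplike one has $\Delta(g) = g \otimes g$, $\epsilon(g) = 1$ and $S(g) = g^{-1}$, hence $\Delta(gr) = gr_{(1)} \otimes gr_{(2)}$ and $S(gr) = S(r)g^{-1}$, so each equation for $gr$ is obtained from the corresponding equation for $r$ by multiplying the appropriate tensor slots by $g$ or $g^{-1}$.

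The case $S(r)$ is the main content. The key observation is that $S$ is an antimorphism for both the algebra and the coalgebra structure, so in particular $\Delta(S(r)) = S(r_{(2)}) \otimes S(r_{(1)})$. The strategy is to apply $S \otimes S$ to each of \eqref{PCM2r}--\eqref{PCM5r} and then swap the two tensor factors. Setting $r' := S(r)$ and noting that $S(r') = S^2(r)$, I expect this operation to yield a bijective pairing of the axioms:
\[
\eqref{PCM2r}\text{ for }r \ \longleftrightarrow \ \eqref{PCM5r}\text{ for }r', \qquad
\eqref{PCM3r}\text{ for }r \ \longleftrightarrow \ \eqref{PCM4r}\text{ for }r'.
\]
Thus all four of \eqref{PCM2r}--\eqref{PCM5r} for $r'$ follow from the corresponding axioms for $r$, while \eqref{PCM1r} for $S(r)$ is immediate from $\epsilon \circ S = \epsilon$.

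Finally, when $S$ is invertible, the very same scheme applies to $r'' := S^{-1}(r)$: the map $S^{-1}$ is again an algebra and coalgebra antimorphism with $\Delta(S^{-1}(r)) = S^{-1}(r_{(2)}) \otimes S^{-1}(r_{(1)})$, so applying $S^{-1} \otimes S^{-1}$ to the axioms for $r$ and flipping tensor factors yields the corresponding axioms for $r''$ (using $S(r'') = r$). The only real obstacle is the bookkeeping needed to confirm the pairing of axioms under $(S^{\pm 1} \otimes S^{\pm 1})$ composed with the flip; this amounts to a handful of routine manipulations, one per equation.
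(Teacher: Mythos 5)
Your proposal is correct and follows essentially the same route as the paper: the $gr$ case is delegated to \cref{ex:gtrans}, and the $S^{\pm1}(r)$ cases are handled by applying $\tau\circ(S^{\pm1}\otimes S^{\pm1})$ to the axioms for $r$, with exactly the pairing \eqref{PCM2r}$\leftrightarrow$\eqref{PCM5r} and \eqref{PCM3r}$\leftrightarrow$\eqref{PCM4r} that the paper illustrates on one representative instance. The pairing you ``expect'' does indeed check out, so there is no gap.
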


\begin{proof}
    The proof is a straightforward check. For example, \eqref{PCM5r} for \(r\) entails \eqref{PCM2r} for \(S^{\pm1}(r)\) by taking $\tau \circ (S \otimes S)$ on both sides, where $\tau \colon H\otimes H \rightarrow H\otimes H$ is the usual flip operator:
    \[S(r)_{(1)} \otimes S(r)_{(2)}S\big(S(r)\big) = S(r_{(2)}) \otimes S\big(S(r)r_{(1)}\big) \stackrel{\eqref{PCM5r}}{=} S(r) \otimes S(r)S\big(S(r)\big)\]
    and similarly for \(S^{-1}(r)\). Moreover, the fact that \(gr\) also defines a partial comodule follows directly from \cref{ex:gtrans}.
\end{proof}

\begin{proposition}
	\label{pr:ptensorp}
	Let \(r \in H\) be an element satisfying \eqref{eq:PCMr} and let \(e \coloneqq rS(r)\). Then \(e^2 = e\) and
	\begin{equation}\label{eq:pp1op2}
 e e_{(1)} \otimes e_{(2)} = e \otimes e = e_{(1)}e \otimes e_{(2)}.
 \end{equation}
\end{proposition}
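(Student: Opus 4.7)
The idempotency $e^2 = e$ is immediate from \eqref{rS(r)r}: indeed $e^2 = rS(r)\cdot rS(r) = (rS(r)r)\,S(r) = rS(r) = e$. For the two subcentrality-type identities $ee_{(1)} \otimes e_{(2)} = e \otimes e = e_{(1)}e \otimes e_{(2)}$, my strategy is to expand $\Delta(e) = \Delta(r)\Delta(S(r))$, write $\Delta(S(r)) = S(r_{(2')}) \otimes S(r_{(1')})$ with an independent primed Sweedler copy, and then absorb the external factor $e = rS(r)$ into the product using the partial coassociativity axioms \eqref{PCM2r}--\eqref{PCM5r}. Each of these axioms trades a single Sweedler leg of $r$ for the undecomposed element at the cost of an extra $r$ or $S(r)$ on a fixed side, and the residual $rS(r)r$ or $S(r)rS(r)$ then collapses via the von Neumann regularity identities \eqref{rS(r)r} and \eqref{S(r)rS(r)}.

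Concretely, for $(e \otimes 1)\Delta(e) = e \otimes e$, I first use \eqref{PCM5r} to compute $(S(r) \otimes 1)\Delta(r) = S(r)r_{(1)} \otimes r_{(2)} = S(r)r \otimes r$; left-multiplying by $(r \otimes 1)$ and invoking $rS(r)r = r$ reduces the left factor of $\Delta(e)$ to $r \otimes r$, so that
\[(e \otimes 1)\Delta(e) = (r \otimes r)\Delta(S(r)) = rS(r_{(2)}) \otimes rS(r_{(1)}).\]
To identify this with $e \otimes e$, I apply $\id \otimes S$ to \eqref{PCM3r} and then left-multiply the second slot by $r$, obtaining $rS(r_{(1)}) \otimes rS(r_{(2)}) = e \otimes e$. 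Because $e \otimes e$ is fixed under the flip $\tau$, its transpose $rS(r_{(2)}) \otimes rS(r_{(1)})$ is equal to $e \otimes e$ as well.

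For $\Delta(e)(e \otimes 1) = e \otimes e$, the argument is the mirror image: I use \eqref{PCM4r} (flipped by $\tau$ and post-composed with $\id \otimes S$) to compute $\Delta(S(r))(r \otimes 1) = S(r_{(2)})r \otimes S(r_{(1)}) = S(r)r \otimes S(r)$, then right-multiply by $(S(r) \otimes 1)$ and invoke \eqref{S(r)rS(r)} to reduce the right factor of $\Delta(e)(e \otimes 1)$ to $S(r) \otimes S(r)$. Hence $\Delta(e)(e \otimes 1) = \Delta(r)(S(r) \otimes S(r)) = r_{(1)}S(r) \otimes r_{(2)}S(r)$, and this equals $e \otimes e$ by right-multiplying \eqref{PCM2r} by $(S(r) \otimes 1)$. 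The main obstacle is purely bookkeeping---distinguishing the two independent Sweedler copies in $\Delta(r)\Delta(S(r))$ and matching each axiom to the correct sub-factor---but once one observes that \eqref{PCM5r} and \eqref{PCM3r} are tailored to absorb the left-hand $e$ while \eqref{PCM4r} and \eqref{PCM2r} absorb the right-hand one, the identifications are essentially forced.
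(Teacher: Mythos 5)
Your proof is correct and takes essentially the same route as the paper's: both expand \(\Delta(e)=\Delta(r)\Delta(S(r))\) and use \eqref{PCM5r} together with \eqref{PCM3r} (respectively \eqref{PCM4r} together with \eqref{PCM2r}) to absorb the left-hand (respectively right-hand) factor \(e\), collapsing the residual \(rS(r)r\) or \(S(r)rS(r)\) via \eqref{rS(r)r} and \eqref{S(r)rS(r)}. The only difference is bookkeeping --- you invoke the von Neumann regularity identities mid-computation on the sub-factors \((e\otimes 1)\Delta(r)\) and \(\Delta(S(r))(e\otimes 1)\) rather than at the end of the full expansion.
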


\begin{proof}
	From the identity \eqref{rS(r)r} we conclude that $rS(r)rS(r) = rS(r)$. Therefore, $e$ is idempotent. 
	
	Consider now the expression of $ee_{(1)}\otimes e_{(2)}$:
	\begin{eqnarray*}
		ee_{(1)} \otimes e_{(2)} & = &  rS(r)r_{(1')} S(r)_{(1)} \otimes r_{(2')}S(r)_{(2)} \\
		& \stackrel{\eqref{PCM5r}}{=} & rS(r)r S(r_{(2)}) \otimes rS(r_{(1)}) \\
		& \stackrel{\eqref{PCM3r}}{=} & rS(r) rS(r) \otimes rS(r) \\
		& \stackrel{\eqref{rS(r)r}}{=} & rS(r) \otimes rS(r) \\
		& = & e\otimes e.
	\end{eqnarray*}
	On the other hand,
	\begin{eqnarray*}
		e_{(1)} e \otimes e_{(2)} & = & r_{(1')}S(r_{(2)})rS(r) \otimes r_{(2')}S(r_{(1)}) \\
		& \stackrel{\eqref{PCM4r}}{=} & r_{(1')}S(r)r S(r) \otimes r_{(2')}S(r) \\
		& \stackrel{\eqref{PCM2r}}{=} & rS(r) r S(r) \otimes rS(r) \\
		& \stackrel{\eqref{rS(r)r}}{=} & rS(r) \otimes rS(r) \\
		& = & e \otimes e,
	\end{eqnarray*}
	as claimed.
\end{proof}

\begin{corollary}\label{cor:sinvsubcentr}
 If \(S\) is invertible, then \(e = S^{-1}(r)r\) is a subcentral idempotent satisfying \eqref{eq:pp1op2}.
\end{corollary}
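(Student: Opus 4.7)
The plan is to obtain this corollary as an immediate consequence of the two preceding results, applied to $S^{-1}(r)$ rather than to $r$ itself. The key observation is that, when $S$ is invertible, \cref{pr:Sofr} guarantees that $S^{-1}(r)$ again satisfies the full system \eqref{eq:PCMr}, so it defines a $1$-dimensional partial comodule structure, and \cref{pr:ptensorp} can be applied to it verbatim.

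Concretely, I would proceed in three short steps. First, invoke \cref{pr:Sofr} to note that $r' \coloneqq S^{-1}(r)$ satisfies \eqref{PCM1r}--\eqref{PCM5r}. Second, apply \cref{pr:ptensorp} to $r'$: the associated element $r'S(r') = S^{-1}(r)\,r = e$ is therefore idempotent and satisfies
\[
e\,e_{(1)} \otimes e_{(2)} \;=\; e \otimes e \;=\; e_{(1)}\,e \otimes e_{(2)},
\]
which in particular gives the subcentrality relation \eqref{eq:pp1op2}. Third, verify that $e \neq 0$, which is needed because a subcentral idempotent is by definition non-zero; this follows at once from $\varepsilon(r)=1$ together with $\varepsilon\circ S^{-1} = \varepsilon$, yielding $\varepsilon(e) = \varepsilon(S^{-1}(r))\varepsilon(r) = 1$.

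There is no real obstacle here: the whole content is bookkeeping with the symmetry between $r$ and $S^{-1}(r)$ already established in \cref{pr:Sofr}, and then a direct appeal to \cref{pr:ptensorp}. The only point that deserves an explicit mention is the non-vanishing of $e$, which is why it is worth recording the counit computation rather than presenting the corollary as completely automatic.
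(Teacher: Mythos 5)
Your proposal is correct and follows exactly the paper's own argument: apply \cref{pr:Sofr} to see that $S^{-1}(r)$ satisfies \eqref{eq:PCMr}, then apply \cref{pr:ptensorp} to $r'=S^{-1}(r)$, noting $r'S(r')=S^{-1}(r)\,r=e$. Your additional counit computation $\epsilon(e)=1$ confirming $e\neq 0$ is a small but worthwhile point that the paper leaves implicit.
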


\begin{proof}
    It follows from \cref{pr:ptensorp} applied to $S^{-1}(r)$, in view of \cref{pr:Sofr}.
\end{proof}

\cref{pr:ptensorp} shows that we can apply \cref{construction:general2} on the idempotent \(e = rS(r)\) for \(r \in H\) satisfying \eqref{eq:PCMr}. So, let \(A\) be the right coideal subalgebra generated by \(e,\) i.\,e.\ the subalgebra generated by \(H^* \cdot e = \{e_{(1)} \varphi(e_{(2)}) \mid \varphi \in H^*\}\). Recall that \(HA^+ = H(A \cap \ker \epsilon)\) is a coideal in \(H\), so that \(\bar{H} = H/HA^+\) is a coalgebra. Denote by \(\pi\) the projection \(H \to \bar{H}\), which is, by construction, a morphism of coalgebras.

\begin{lemma}\label{lem:grplike}
	Using the foregoing notation, \(\pi\big(S(r)\big)\) is a grouplike element in \(\bar{H}\).
\end{lemma}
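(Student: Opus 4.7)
The plan is to verify the two defining properties of a grouplike element for \(\pi(S(r))\): that \(\bar{\epsilon}(\pi(S(r))) = 1\) and that \(\bar{\Delta}(\pi(S(r))) = \pi(S(r)) \otimes \pi(S(r))\). The counit condition is immediate, because \(\pi\) is a coalgebra morphism (by construction of \(\bar{H}\)) and \(\epsilon(S(r)) = \epsilon(r) = 1\) by \eqref{PCM1r}.

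For the comultiplicativity, the heart of the argument is to establish the identity
\[
S(r) \otimes S(r) \;=\; S(r_{(2)})\, e \otimes S(r_{(1)}) \qquad \text{in } H \otimes H.
\]
I would derive it as follows. Start from \eqref{PCM4r}, swap the two tensor factors to obtain \(S(r) r \otimes r = S(r_{(2)})\, r \otimes r_{(1)}\), then apply \(\mathrm{id} \otimes S\) to get \(S(r) r \otimes S(r) = S(r_{(2)})\, r \otimes S(r_{(1)})\). Finally, right-multiply the first tensor slot by \(S(r)\). The left-hand side collapses via \eqref{S(r)rS(r)} to \(S(r) r S(r) \otimes S(r) = S(r) \otimes S(r)\), while the right-hand side becomes \(S(r_{(2)})\, r S(r) \otimes S(r_{(1)}) = S(r_{(2)})\, e \otimes S(r_{(1)})\) by definition of \(e\).

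To finish, apply \(\pi \otimes \pi\) to this identity. Since \(e \in A\) with \(\epsilon(e) = \epsilon(r)\epsilon(S(r)) = 1\), and since \(HA^+ \subseteq \ker \pi\) forces \(\pi(ha) = \epsilon(a)\pi(h)\) for every \(a \in A\) and \(h \in H\), the right-hand side simplifies to \(\pi(S(r_{(2)})) \otimes \pi(S(r_{(1)}))\). But this is precisely \((\pi \otimes \pi)\Delta(S(r)) = \bar{\Delta}(\pi(S(r)))\), since \(\pi\) is a coalgebra morphism and \(\Delta(S(r)) = S(r_{(2)}) \otimes S(r_{(1)})\). Comparing with the left-hand side \(\pi(S(r)) \otimes \pi(S(r))\) yields the grouplike property.

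The only mildly subtle step is recognizing which combination of the partial comodule axioms produces an \(e\) sitting adjacent to an \(S(r_{(i)})\) on the right tensor slot — so that the quotient by \(HA^+\) absorbs it. Once one notices that \(\tau \circ (\mathrm{id} \otimes S)\) applied to \eqref{PCM4r}, followed by right multiplication by \(S(r)\), does the job (thanks to the von Neumann regularity relation \eqref{S(r)rS(r)}), the rest of the verification is routine.
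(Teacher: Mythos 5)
Your proof is correct and follows essentially the same route as the paper's: both arguments hinge on \eqref{PCM4r}, the von Neumann regularity relation \(S(r)rS(r)=S(r)\), and the absorption property \(\pi(he)=\pi(h)\) coming from \(e\in A\) with \(\epsilon(e)=1\). The only difference is organizational — you establish the identity \(S(r)\otimes S(r)=S(r_{(2)})e\otimes S(r_{(1)})\) in \(H\otimes H\) and then push it down through \(\pi\otimes\pi\), whereas the paper runs the same chain of equalities directly under \(\pi\), inserting \(e\) for free and then removing it via \eqref{PCM4r} and regularity.
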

\begin{proof}
	Indeed, since $\pi :H\rightarrow \bar{H}$ is a morphism of coalgebras,
	\begin{eqnarray*}
		\Delta (\pi (S(r))) & = & \pi (S(r)_{(1)}) \otimes \pi (S(r)_{(2)}) \\
		& \stackrel{(\star)}{=} & \pi (S(r_{(2)}) e) \otimes \pi (S(r_{(1)})) \\
		& = & \pi (S(r_{(2)}) rS(r)) \otimes \pi (S(r_{(1)})) \\
		& \stackrel{\eqref{PCM4r}}{=} & \pi (S(r) rS(r)) \otimes \pi (S(r)) \\
		& = & \pi (S(r)) \otimes \pi (S(r)) .
	\end{eqnarray*}
	Here, in equality \((\star)\) we used that $e\in A$ and $\epsilon (e)=\epsilon (rS (r)) =1$, so that for any \(h \in H,\) $he - h \in HA^+$, which implies that $\pi (h)=\pi (h e)$. Therefore, $\pi (S(r))\in \bar{H}$ is grouplike.
\end{proof} 

\begin{lemma}
	\label{le:p_int}
 Consider \(e \in H\) 
 satisfying \eqref{eq:pp1op2} and let \(A\) be the coideal subalgebra generated by \(e\). Let \(\pi : H \to H/HA^+\) be the canonical coalgebra projection as before.
	\begin{enumerate}[(i),leftmargin=0.8cm]
		\item Then \(e\) is an integral for \(A\), i.\,e.\ \(ae = \epsilon(a)e = ea\) for all \(a \in A\). 
		\item If \(\pi(x) = \pi(y)\), then \(xe = ye\). 
	\end{enumerate}
\end{lemma}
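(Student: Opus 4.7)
The strategy is to prove (i) by verifying the integral property on a natural set of generators of $A$, and then deduce (ii) as an immediate consequence.

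For (i), I would invoke the description of $A$ extracted from the proof of \cref{le:ecentral}: writing $e_{(1)} \otimes e_{(2)} = \sum_i a_i \otimes b_i$ with the $b_i$ linearly independent, the elements $a_i$ generate $A$ as a subalgebra, and, equivalently, $A$ is generated by the set $\{e_{(1)}\varphi(e_{(2)}) \mid \varphi \in H^*\}$. Applying $H \otimes \varphi$ to the leftmost equality in \eqref{eq:pp1op2} yields the crucial identity
\[
e \cdot e_{(1)}\varphi(e_{(2)}) \;=\; \varphi(e)\,e \;=\; \epsilon\bigl(e_{(1)}\varphi(e_{(2)})\bigr)\,e,
\]
so that $ea = \epsilon(a)\,e$ holds on every generator of $A$. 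A short induction on word length, using multiplicativity of $\epsilon$, extends this to $ea = \epsilon(a)\,e$ for all $a \in A$. Centrality of $e$ in $A$ is granted by \cref{le:ecentral} (its hypothesis is precisely the rightmost equality in \eqref{eq:pp1op2}), hence $ae = ea = \epsilon(a)\,e$.

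For (ii), if $\pi(x) = \pi(y)$ then $x - y \in H A^+$, so I can write $x - y = \sum_i h_i a_i$ with $h_i \in H$ and $a_i \in A \cap \ker\epsilon$. Multiplying on the right by $e$ and applying part (i) gives
\[
(x-y)\,e \;=\; \sum_i h_i\,(a_i e) \;=\; \sum_i \epsilon(a_i)\,h_i\,e \;=\; 0,
\]
and hence $xe = ye$.

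There is no real obstacle to either step: the whole argument rests on isolating the correct generators of $A$ and observing that the hypothesis \eqref{eq:pp1op2} automatically forces the integral identity on them.
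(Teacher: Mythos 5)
Your proposal is correct and follows essentially the same route as the paper: verify the integral identity on the generators $e_{(1)}\varphi(e_{(2)})$ of $A$ by applying $H \otimes \varphi$ to \eqref{eq:pp1op2}, extend multiplicatively, and then deduce (ii) by writing $x - y = \sum_i h_i a_i$ with $\epsilon(a_i) = 0$ and killing each term with part (i). The only cosmetic difference is that you obtain $ae = ea$ by citing \cref{le:ecentral}, whereas the paper reads both equalities $ee_{(1)}\varphi(e_{(2)}) = \varphi(e)e = e_{(1)}\varphi(e_{(2)})e$ directly off \eqref{eq:pp1op2}; both are valid.
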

\begin{proof}
	\begin{enumerate}[(i),leftmargin=0.8cm]
	\item A typical element of $A$ is a linear combination of products of the type $e_{(1)}\varphi (e_{(2)})$, for $\varphi \in H^*$. Applying \(H \otimes \varphi\) to \eqref{eq:pp1op2}, we get
	\[ee_{(1)}\varphi (e_{(2)}) = e_{(1)}\varphi(e_{(2)})e  = \varphi(e)e = \epsilon(e_{(1)}\varphi(e_{(2)})) e.\]
Therefore, $e$ is a left and right integral in $A$.

	\item If $\pi (x) =\pi (y)$, then $x-y \in HA^+$, that is, 
	\[
	x-y = \sum_i h_i a_i , \qquad h_i \in H, \quad a_i \in A \quad \text{ and  } \quad \epsilon (a_i )=0.
	\]
	Hence, 
	\[
	xe-ye =(x-y)e =\sum_i h_i a_i e \overset{(i)}{=}\sum_i h_i \epsilon (a_i )e =0 ,
	\]
	which means that $xe=ye$. \qedhere
	\end{enumerate}
\end{proof}

We are now ready to prove our main theorem.

\begin{alphthm}
    \label{th:1D}
	Every \(1\)-dimensional partial comodule over a Hopf algebra \(H\) with invertible antipode is of the form \(W  \cotensor^{\bar{H}_e} He\) for some subcentral idempotent \(e \in H\) satisfying \eqref{eq:pp1op2} and some \(1\)-dimensional right \(\bar H_e\)-comodule \(W\).
\end{alphthm}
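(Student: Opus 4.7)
The plan is to take $e \coloneqq S^{-1}(r) r$, which by \cref{cor:sinvsubcentr} is a subcentral idempotent satisfying \eqref{eq:pp1op2}, and to let $W$ be the $1$-dimensional right $\bar H_e$-comodule with coaction $1 \mapsto 1 \otimes \pi_e(r)$. I would then exhibit the bijective partial comodule morphism
\[
\phi \colon k \longrightarrow W \cotensor^{\bar H_e} He, \qquad 1 \longmapsto 1 \otimes r.
\]
Two preparatory observations are needed: \emph{(a)} $r \in He$, equivalently $r S^{-1}(r) r = r$, which follows by applying $S^{-1}$ to \eqref{S(r)rS(r)}; and \emph{(b)} $\pi_e(r)$ is grouplike in $\bar H_e$, which is \cref{lem:grplike} applied to $S^{-1}(r)$, using that $S^{-1}(r)$ satisfies the PCM axioms by \cref{pr:Sofr} and that the associated idempotent is $S^{-1}(r) \cdot S(S^{-1}(r)) = e$.

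The key technical step, which I expect to be the main obstacle, is the identity
\[
r_{(1)} e \otimes r_{(2)} = r \otimes r \qquad \text{in } H \otimes H. \qquad (\star)
\]
This does not appear to be derivable from the PCM identities for $r$ itself, so my approach is to apply \eqref{PCM4r} with $r$ replaced by $S^{-1}(r)$. Using $S(S^{-1}(r)) = r$ and $\Delta \circ S^{-1} = \tau \circ (S^{-1} \otimes S^{-1}) \circ \Delta$, this rewrites as
\[
S^{-1}(r) \otimes r S^{-1}(r) = S^{-1}(r_{(2)}) \otimes r_{(1)} S^{-1}(r).
\]
Applying $S \otimes \id$ to the first tensor factor and then multiplying the second tensor factor on the right by $r$, while invoking (a), one obtains $r \otimes r = r_{(2)} \otimes r_{(1)} e$; the flip $\tau$ then delivers $(\star)$.

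Granted $(\star)$, the remaining verifications are routine. Applying $\pi_e \otimes H$ to $(\star)$ and using $\pi_e(r_{(1)} e) = \pi_e(r_{(1)})$ (left $H$-linearity of $\pi_e$ together with $\pi_e(e) = \pi_e(1)$, since $e - 1 \in A_e^+$) produces $\pi_e(r_{(1)}) \otimes r_{(2)} e = \pi_e(r) \otimes r$, which shows that $1 \otimes r$ genuinely lies in $W \cotensor^{\bar H_e} He$; the morphism-of-partial-comodules condition for $\phi$ is then just $(\star)$ again. Injectivity of $\phi$ is immediate since $r \neq 0$. For surjectivity, given $1 \otimes y \in W \cotensor^{\bar H_e} He$, applying $\bar H_e \otimes \epsilon$ to the cotensor defining relation gives $\pi_e(y) = \epsilon(y) \pi_e(r)$, so that $y - \epsilon(y) r \in \ker \pi_e = HA_e^+$. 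Multiplying that relation on the right by $e$ and invoking \cref{le:p_int}(i) (which, since $e$ is an integral in $A_e$, yields $A_e^+ \cdot e = 0$), together with $y e = y$ and $r e = r$, forces $y = \epsilon(y)\, r$. Thus $W \cotensor^{\bar H_e} He = k \cdot (1 \otimes r)$ and $\phi$ is surjective, completing the proof.
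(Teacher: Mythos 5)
Your proposal is correct and follows essentially the same route as the paper: the same idempotent $e=S^{-1}(r)r$, the same comodule $W$ determined by the grouplike $\pi_e(r)$, the same use of \cref{le:p_int} to show the cotensor product is spanned by $r$, and the same key identity $r_{(1)}e\otimes r_{(2)}=r\otimes r$ (which the paper obtains by applying $S^{-1}\otimes\mathrm{id}$ to \eqref{PCM3r} and right-multiplying by $r$ --- equivalent, via \cref{pr:Sofr}, to your use of \eqref{PCM4r} for $S^{-1}(r)$). The only difference is presentational: you package the argument as an explicit isomorphism $\phi$ rather than computing the cotensor product directly.
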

\begin{proof}
	The partial comodule structure on \(k\) is completely determined by \(\rho(1) = r \in H,\) and by \cref{cor:sinvsubcentr}, \(e = S^{-1}(r)r\) is a subcentral idempotent (satisfying \eqref{eq:pp1op2}).
	
	In view of \cref{lem:grplike} applied to \(S^{-1}(r)\), \(\pi(r)\) is grouplike in \(\bar H\). Let then \(W\) be the \(1\)-dimensional right \(\bar{H}\)-comodule \(1_k \mapsto 1_k \otimes \pi(r)\). Recall that by \cref{le:Hbarcomodule}, \(He\) is a left \(\bar{H}\)-comodule via \(he \mapsto \pi(h_{(1)}) \otimes h_{(2)}e\). Hence
	\begin{equation}
		\label{eq:WHp1D}
		W \cotensor^{\bar{H}} He = \{he \in He \mid \pi(h_{(1)}) \otimes h_{(2)} e = \pi(r) \otimes he\}.
	\end{equation}
	By applying \(\epsilon\) on the right tensorand of the condition defining (\ref{eq:WHp1D}), we find that if \(he \in W \cotensor^{\bar{H}} He\), then
	\[\pi(h) = \pi(\epsilon(h) r).\]
	By \cref{le:p_int}, we conclude that if \(he \in W \cotensor^{\bar{H}} He\), then \(he = \epsilon(h) re = \epsilon(h) r S^{-1}(r) r = \epsilon(h) r\). 
	Hence \(W \cotensor^{\bar{H}} He\) is \(1\)-dimensional: it is the linear span of \(r\).
	
	Finally, the partial comodule structure on \(W \cotensor^{\bar{H}} He = \langle r \rangle\) is given by
	\[r \mapsto r_{(1)} e \otimes r_{(2)} = r_{(1)} S^{-1}(r) r \otimes r_{(2)} \overset{\eqref{PCM3r}}{=} r S^{-1}(r) r \otimes r = r \otimes r\]
	so \(W \cotensor^{\bar{H}} He\) is indeed isomorphic to the \(1\)-dimensional partial comodule we started with.
\end{proof}

If \((k, \rho)\) is not just a partial comodule but a partial comodule algebra, then it follows from the axiom \ref{PC1} that \(\epsilon(r) = 1\), from \ref{PC2} that \(r\) is already idempotent and from \ref{PC3} that \(r\) is subcentral and satisfies \(r_{(1)} r \otimes r_{(2)} = r \otimes r\). Then 
\[e = S^{-1}(r) r = S^{-1}(r_{(2)}) r_{(1)} r = \epsilon(r) r = r\]
and \(\pi(r) = \pi(1_H)\). So using the notation of \cref{le:partialcomodulealgebra}, the \(1\)-dimensional partial comodule algebras of \(H\) are all of the form \(T \cotensor^{\bar{H}} He \cong {^{\mathrm{co}\bar{H}_e}}H \, e\) for some subcentral idempotent \(e\).

If \(H\) is finite-dimensional, then we are in the setting of \cref{rk:comodulealgebra} and \({^{\mathrm{co}\bar{H}_e}}H \, e = A_e\, e\). In that case \(H^*\) is a Hopf algebra too, and a \(1\)-dimensional partial \(H^*\)-comodule is a \(1\)-dimensional partial \(H\)-module. In \cite{MPS}, \(1\)-dimensional partial module algebras are studied. By the foregoing discussion, these correspond to subcentral idempotents in \(H^*\) (i.\,e.\ a partial module algebra structure \(\lambda : H \to k\) on \(k\) is a subcentral idempotent of \(H^*\)). The partial smash product algebra \(k \underline{\#} H^*\) (see \cite[\S4]{MPS}) is
\[\{\varphi_{(1)}(r) \varphi_{(2)} \mid \varphi \in H^*\}\]
which is a right coideal subalgebra of \(H^*\) associated to the subcentral idempotent \(r \in H\).

\begin{example}
    Consider Sweedler's Hopf algebra \(H_4\) over a field \(k\) of characteristic different from \(2\). This is a \(4\)-dimensional Hopf algebra with basis \(\{1, g, x, gx\}\) where 
    \begin{align*}
        g^2 &= 1, & x^2 &= 0; & xg &= -gx; \\
        \Delta(g) &= g \otimes g, & \epsilon(g) &= 1, & S(g) &= g; \\
        \Delta(x) &= 1 \otimes x + x \otimes g, & \epsilon(x) &= 0, & S(x) &= gx.
    \end{align*}

    We want to determine all the \(1\)-dimensional right partial comodules over \(H = H_4\), up to isomorphism. By \cref{th:1D}, we know that they are all of the form \(W  \cotensor^{\bar{H}} He\) for some subcentral idempotent \(e \in H\) satisfying \eqref{eq:pp1op2} and some \(1\)-dimensional right \(\bar H\)-comodule \(W\). By a direct computation, the subcentral idempotents in \(H_4\) are \(1\) and \(\frac{1 \pm g}{2} + \gamma x\) for \(\gamma \in k\) and only those of the form \(1\) and \(\frac{1 + g}{2} + \gamma x\) satisfy \eqref{eq:pp1op2}. For each of them, the ideal \(Ae = \prescript{\mathrm{co}\bar H}{}{H}\,e\) is \(1\)-dimensional, generated by \(e\), and it is a partial \(H\)-comodule with respect to
    \[\rho \colon e \mapsto e_{(1)}e \otimes e_{(2)} \stackrel{\eqref{eq:pp1op2}}{=} e \otimes e.\]
    For \(e = 1\), the quotient coalgebra is \(\bar{H} = H,\) for \(e = \frac{1 + g}{2} + \gamma x\) with \(\gamma \neq 0\) we get a two-dimensional coalgebra which has a basis of grouplikes \(\{\pi(1), \pi(g)\}\). This means that in each of these cases, there are two choices for \(W\) to construct a partial comodule using \cref{construction:general2}. If \(\gamma = 0\), i.\,e.\ if \(e = \frac{1 + g}{2}\), then \(\bar{H}\) contains only one grouplike: \(\pi(1)\). 
    In any one of these cases, we are in the setting of \cref{prop:Ierhog} and hence any additional \(1\)-dimensional partial comodule can be obtained from \(Ae\) by modifying \(\rho\) into \(\rho^g\).
    Thus, a complete list of \(1\)-dimensional partial \(H_4\)-comodules is
    \begin{align*}
        &(\text{from } e = 1 :) & 1 &\mapsto 1 \otimes 1, & 1 &\mapsto 1 \otimes g, \\
        &(\text{from } e = \frac{1 + g}{2} :) & 1 & \mapsto 1 \otimes \frac{1 + g}{2}, && \\
        &(\text{from } e = \frac{1 + g}{2} + \gamma x \text{ with } \gamma \neq 0 :) & 1 & \mapsto 1 \otimes \left(\frac{1 + g}{2} + \gamma x\right), & 1 & \mapsto 1 \otimes \left(\frac{1 + g}{2}  + \gamma gx\right). 
        \end{align*}

    In fact, \(H_{4, par}\) has been explicitly described in \cite{ABVparreps}. From this description it can be deduced that every simple partial \(H_4\)-module is \(1\)-dimensional. Since \(H_4\) is self-dual, also every simple partial comodule is \(1\)-dimensional. 
\end{example}

\begin{remark}
    One could try to imitate the proof of \cref{th:1D} in the general case of finite-dimensional partial comodules.
    If we start from an \(n\)-dimensional partial comodule \(M\) rather than a \(1\)-dimensional one, the partial coaction \(\rho : M \to M \otimes H\) is no longer determined by a single element of \(H\) but by a matrix with entries in \(H\). Choosing a basis \(\{m_1, \dots, m_n\}\) for \(M\), we can write
    \[\rho(m_i) = \sum_{j = 1}^n m_j \otimes r_{ij} \in M \otimes H\]
    and obtain matrices
    \begin{align*}
        R &= (r_{ij})_{1 \leq i, j \leq n}, \\
        S(R) &= (S(r_{ij}))_{1 \leq i, j \leq n}, \\
        P &= R\ S(R)
    \end{align*}
    in \(\mathsf{Mat}_n(H)\).
    This matrix \(P\) is idempotent and satisfies \(PP_{(1)} \otimes P_{(2)} = P_{(1)} P \otimes P_{(2)}\) (if interpreted appropriately as an identity in \(\mathsf{Mat}_n(H \otimes H)\)), but it is unclear how to extract a subcentral idempotent \(e\) which recovers the original partial comodule. 
\end{remark}

\section{Partial comodules of finite groups}
\label{se:groups}
In this section we will apply \cref{construction:general2} to \(H = kG\) for some finite group \(G\) such that \(\mathrm{char}(k) \nmid |G|\). We will show that the construction gives a simple partial comodule if \(W\) is simple (thus proving \cref{conj:simple} in this particular case) and we determine when two partial comodules obtained from the construction are isomorphic.  

\subsection{Simplicity and redundancy}

First we will look at partial comodule algebras as described in \cref{le:partialcomodulealgebra} and \cref{rk:comodulealgebra}.
The right coideal subalgebras of \(kG\) are exactly the Hopf subalgebras, and those are of the form \(kK\) for a subgroup \(K \leq G\). 

So, a subcentral idempotent \(e = \sum_{g \in G} \mu_g\, g\in kG\) generates a right coideal subalgebra \(kK,\) where \(K\) is exactly the subgroup generated by \(\{g \in G \mid \mu_g \neq 0\}\). 
Since \(e\) is a central idempotent of \(kK\), it generates a representation \(I = kKe\) 
\[e = \frac{1}{|K|} \sum_{h \in K} \chi(h^{-1}) h\]
where \(\chi\) is the character of \(I\),  
as the following well-known fact states.

\begin{lemma}
\label{le:eformula}
Let \(K\) be a finite group and \(k\) be a field such that \(\mathrm{char}(k) \nmid |K|\). If \(e\) is a central idempotent in \(kK\) and \(I = kKe\), then 
\[e = \frac{1}{|K|} \sum_{h \in K} \chi(h^{-1}) h\]
where \(\chi\) is the character of \(I\).
In particular, if \(e_V\) is the central idempotent corresponding to the identity endomorphism of an irreducible representation \(V\) of \(K\), then
\[e_V = \frac{\chi_V(1)}{|K|} \sum_{h \in K} \chi_V(h^{-1}) h,\]
where \(\chi_V\) is the corresponding irreducible character.
\end{lemma}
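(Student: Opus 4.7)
My plan is to compute the coefficients of $e$ in the basis $K$ by exploiting the fact that $e$ acts as the identity on $I$ and as zero on a complement, turning the problem into a trace computation on the regular representation.

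Write $e = \sum_{g \in K} \mu_g g$ with $\mu_g \in k$. Since $e$ is central, conjugating $e$ by any $h \in K$ leaves it fixed, which forces $\mu_{hgh^{-1}} = \mu_g$ for all $g,h$; equivalently, the function $g \mapsto \mu_g$ is a class function. This is the only structural consequence of centrality that I will need.

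Next, because $e$ is a central idempotent, right multiplication $R_e \colon kK \to kK$, $x \mapsto xe$, is the projection of $kK$ onto $I = kKe$ along $kK(1-e)$. Therefore, for any $h \in K$, the composite $L_h \circ R_e$ (left multiplication by $h$ followed by projection onto $I$) has trace equal to the trace of $L_h$ on $I$, which by definition is $\chi(h)$. I will now compute this trace directly in the basis $K$. One has $L_h R_e(g) = hge = \sum_{g' \in K} \mu_{g'}\, hgg'$, so the coefficient of $g$ in $L_h R_e(g)$ is $\mu_{g^{-1}h^{-1}g}$. Summing over $g \in K$ and using that $\mu$ is a class function (so $\mu_{g^{-1}h^{-1}g} = \mu_{h^{-1}}$ for every $g$) yields
\[
\chi(h) \;=\; \sum_{g \in K} \mu_{g^{-1}h^{-1}g} \;=\; |K|\, \mu_{h^{-1}}.
\]
Since $\mathrm{char}(k) \nmid |K|$, I can solve to obtain $\mu_g = \chi(g^{-1})/|K|$, which gives the desired formula for $e$.

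For the ``In particular'' part, I would invoke the standard fact that if $V$ is an irreducible representation of $K$ (over a splitting field, which is implicit in writing $\chi_V$ for a single irreducible character), then the block $kK e_V$ of the Wedderburn decomposition is isomorphic to $V^{\oplus \chi_V(1)}$ as a left $kK$-module. Its character is therefore $\chi_V(1)\, \chi_V$, and substituting this into the general formula yields the stated expression for $e_V$. The only mildly delicate point is the trace computation in the second paragraph, and even that reduces to observing that $L_h$ permutes the basis $K$ without fixed points once one uses centrality to make the sum collapse; no serious obstacle is expected.
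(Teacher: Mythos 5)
Your proof is correct and follows essentially the same strategy as the paper's: both compute the trace of left multiplication composed with the projection $x \mapsto xe$ in two ways, once by extracting coefficients in the group basis and once by restricting to $I = kKe$, and both deduce the ``in particular'' part from $kKe_V \cong V^{\oplus \dim V}$. The only cosmetic difference is that you collapse the coefficient sum via the class-function property of $\mu$, while the paper uses $\operatorname{tr}(\Lambda_g) = |K|\,\delta_{g,1}$ directly; your closing aside about $L_h$ permuting $K$ ``without fixed points'' is imprecise for $h=1$, but it is not load-bearing.
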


\begin{proof}
    Let \(\Lambda\colon kK \to \End_k(kK)\) denote the regular representation and \(\lambda \colon kK \to \End_k(I)\) denote the one on \(I\). Write \(e = \sum_{g \in K}\alpha_g\, g\). Choose a basis \(\{v_i,w_j\mid i \in E,j \in F\}\) of \(kK\) where \(\{v_i\mid i \in E\}\) is a basis of \(I\) and \(\{w_j\mid j \in F\}\) is a basis of \(kK(1-e)\). For all \(h \in K\) we have on the one hand
    \[\mathsf{tr}(\Lambda_{h^{-1}e}) = \sum_{g \in K} \alpha_g \mathsf{tr}(\Lambda_{h^{-1}g}) = \alpha_h|K|\]
    and on the other hand
    \[\mathsf{tr}(\Lambda_{h^{-1}e}) = \sum_{i \in E} p_{v_i}\big(\Lambda_{h^{-1}e}(v_i)\big) + \sum_{j \in F} p_{w_j}\big(\Lambda_{h^{-1}e}(w_j)\big) = \sum_{i \in E} p_{v_i}\big(\lambda_{h^{-1}}(v_i)\big) = \chi(h^{-1}).\]
    Therefore, \(\alpha_h = \chi(h^{-1})/|K|\) for all \(h \in K\). The last claim follows from observing that if \(e_V\) is the central idempotent corresponding to \(V\), then \(I \cong \dim(V)V\) and so \(\chi = \chi_V(1)\chi_V\).
\end{proof}

We will now show that \(I\) is simple as a partial comodule. Recall that the partial coaction on \(I\) is given by
\begin{equation}
    \label{eq:Icoaction}
    \rho \colon I \to I \otimes H : x \mapsto x_{(1)} e \otimes x_{(2)}.
\end{equation}

\begin{lemma}\label{lem:subcntsimple}
	Let \(e = \sum_{g \in G} \mu_g\, g\) be a subcentral idempotent in \(kG\) and let \(K\) be the subgroup generated by \(J = \{g \in G \mid \mu_g \neq 0\}\). Then the partial comodule \(I = kK e\) is simple.
\end{lemma}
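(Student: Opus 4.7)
\medskip

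\textbf{Proof plan.} Let $N \subseteq I$ be a nonzero partial subcomodule; I want to show $N = I$. The strategy is to first extract a ``basis-type'' element $he \in N$ (with $h \in K$) by exploiting the very rigid form of $\rho$ on the group basis, then iteratively translate $h$ by generators in $J$, and finally use the finiteness of $K$ to sweep out all of $I$.

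\smallskip

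\textbf{Step 1: extracting an element of the form $he$.} Pick $0 \neq x \in N$ and expand it in the group basis: $x = \sum_{g \in K} x_g\, g$ (with $x_g = 0$ for $g \notin K$ since $x \in I \subseteq kK$). Since each $g \in K$ is grouplike, a direct computation gives
\[
\rho(x) \;=\; \sum_{g \in K} x_g (ge) \otimes g.
\]
The elements $\{g \mid g \in K\} \subseteq kG$ are linearly independent, so the condition $\rho(x) \in N \otimes kG$ forces $x_g\, ge \in N$ for every $g \in K$. Since $x \neq 0$, there is at least one $h \in K$ with $x_h \neq 0$, and we conclude $he \in N$.

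\smallskip

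\textbf{Step 2: propagating along $J$.} Given $he \in N$ with $h \in K$, write $he = \sum_{g \in K} \mu_{h^{-1}g}\, g$ (this coefficient is nonzero precisely when $h^{-1}g \in J$, i.e.\ when $g \in hJ$). Applying the argument of Step~1 to $he$ instead of $x$, I get $ge \in N$ for every $g \in hJ$, i.e.\ $(hj)e \in N$ for all $j \in J$. Iterating, $(h\, j_1 j_2 \cdots j_m)e \in N$ for every $m \geq 0$ and every $j_1, \ldots, j_m \in J$.

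\smallskip

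\textbf{Step 3: sweeping out all of $K$.} Here the finiteness of $K$ is essential. Let $M$ be the submonoid of $K$ generated by $J$. Because $K$ is finite, every $m \in M$ has finite order, so $m^{-1} = m^{\operatorname{ord}(m)-1} \in M$; thus $M$ is a subgroup, and since it contains the generating set $J$ we have $M = K$. Combining with Step~2, $(hk)e \in N$ for all $k \in K$. Since $h \in K$ gives $hK = K$, the set $\{(hk)e \mid k \in K\}$ equals $\{k'e \mid k' \in K\}$, which spans $I = kKe$. Hence $I \subseteq N$, proving $N = I$.

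\smallskip

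\textbf{Expected obstacle.} The only subtlety is the first step: one must be careful to work with $\rho$ written out in the $G$-basis so that linear independence of group elements in $kG$ is what forces individual terms $ge$ to lie in $N$. Once this is in hand, Step~2 is automatic by replaying Step~1, and Step~3 is a standard finite-group fact (submonoid $=$ subgroup when the ambient group is finite). No delicate use of the partial coassociativity axioms \ref{PCM2}\,--\,\ref{PCM5} is needed; the counital axiom \ref{PCM1} together with the explicit shape of $\rho$ suffices.
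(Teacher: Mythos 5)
Your proof is correct and follows essentially the same route as the paper: extract elements \(ge\) from a nonzero partial subcomodule using the explicit form of \(\rho\) on the group basis together with linear independence of the group elements, then propagate along the generating set \(J\). The only (cosmetic) difference is that the paper first reduces to the case \(e \in N\) by translating the subcomodule by \(h^{-1}\), whereas you propagate directly from \(he\); you also make explicit the finite-group fact that the submonoid generated by \(J\) equals \(K\), which the paper leaves implicit.
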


\begin{proof}
	Let \(M \subseteq I\) be a non trivial partial subcomodule. Without loss of generality, \(e \in M\). Indeed, if \(x = \sum_{g \in K} \lambda_g\, g \in M \setminus \{0\},\) then \(\rho(x) = \sum_{g \in K} \lambda_g \, ge \otimes g\) and we see that \(he \in M\) whenever \(\lambda_h \neq 0\) because \(M\) is a partial subcomodule. Let \(h \in K\) be such that \(\lambda_h \neq 0\). It is easy to check that \(h^{-1}M\) is a non trivial partial subcomodule of \(I\), and \(h^{-1}x = \lambda_h \, 1_G + \sum_{g \neq 1_G} \lambda_{gh}\, g\). It follows that \(e \in h^{-1} M\). 
	
	We will show that \(I = M\). Since \(e \in M,\) also \(he \in M\) for all \(h \in J\). We have
	\[he = \sum_{g \in K} \mu_g \, hg,\]
	thus it follows that \(hh' e \in M\) for all \(h, h' \in J\). By induction, all products of elements in \(J\) (i.\,e.\ all elements of \(K\) because \(J\) is generating) have the property that \(h_1 \cdots h_n e  \in M,\) which shows that \(M = kKe  = I\). We conclude that \(I\) is simple. 
\end{proof}

\begin{remark}
Observe that \(I\) need not be simple as a \(K\)-representation. For instance, \(e = 1_G - \frac{1}{|G|} \sum_{g \in G} g\) is a subcentral idempotent for which \(kGe\) is an \((n - 1)\)-dimensional simple partial comodule, but of course the \(G\)-representation corresponding to \(e\) is not simple if \(|G| > 2\).
\end{remark}

To continue the construction, remark that
\[kG\, (kK)^+ = \left\langle gh - g \mid g \in G, h \in K \right\rangle \]
and
\begin{gather*}
    \overline{kG} = kG/(kG\,(kK)^+) \cong k(G/K), \\
    \pi : kG \to k(G/K) : g \mapsto gK
\end{gather*}
where \(G/K = \{g_i K \mid i = 1, \dots, n\}\) is the set of left cosets of \(K\) in \(G\). The coalgebra \(k(G/K)\) is cocommutative and has a basis of grouplikes \(G/K\).

Every simple right \(k(G/K)\)-comodule is \(1\)-dimensional; let \(W_i\) be the simple comodule defined by the grouplike \(\pi(g_i) = g_iK\). 
Then
\begin{equation}
    \label{eq:WikG}
    W_i   \cotensor^{\overline{kG}} kG e \cong \big(I, \rho_i\big),
\end{equation}
where as before \(I=kKe\), and
\begin{equation}
    \label{eq:Irhoi}
    \rho_i \colon I \to I \otimes H : x \mapsto x_{(1)} e \otimes g_i x_{(2)},
\end{equation}
by \cref{prop:Ierhog}.
This shows that \(W_i \cotensor^{\overline{kG}} kG  e\) is simple too, because its partial subcomodules are in one-to-one correspondence with the partial subcomodules of \((I,\rho_i)\), which in turn are in one-to-one correspondence with those of \((I,\rho)\). 
To summarize, we have the following theorem.

\begin{theorem}
    \label{th:groupcase_simple}
    If \(H = kG\) for a finite group \(G\) and \(e\) is a subcentral idempotent in \(kG\), then the partial comodule \(W \cotensor^{\overline{kG}} kG e\) obtained by \cref{construction:general2} is simple if and only if \(W\) is a simple (i.\,e.\ \(1\)-dimensional) \(\overline{kG}\)-comodule.
\end{theorem}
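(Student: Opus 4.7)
The first observation is that \(\overline{kG} \cong k(G/K)\) has a basis of grouplikes \(\{g_iK \mid i=1,\dots,n\}\), so it is cosemisimple and its simple right comodules are exactly the \(1\)-dimensional comodules \(W_i\) given by \(1 \mapsto 1 \otimes g_iK\). In particular any right \(\overline{kG}\)-comodule \(W\) decomposes as a direct sum of such \(W_i\).

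For the implication \((\Leftarrow)\), assume \(W = W_i\) for some \(i\). By \eqref{eq:WikG} and \cref{prop:Ierhog}, we have an isomorphism of partial \(kG\)-comodules \(W_i \cotensor^{\overline{kG}} kGe \cong (I,\rho_i)\) with \(\rho_i(x) = x_{(1)}e \otimes g_i x_{(2)}\). The key step is to identify the lattice of partial subcomodules of \((I,\rho_i)\) with that of \((I,\rho)\). Concretely, writing a generic \(x = \sum_{g\in K} \alpha_g g \in I\), one computes
\[ \rho(x) = \sum_{g \in K} \alpha_g\, ge \otimes g, \qquad \rho_i(x) = \sum_{g \in K} \alpha_g\, ge \otimes g_i g, \]
so a subspace \(M \subseteq I\) is \(\rho_i\)-stable if and only if \(\alpha_g\, ge \in M\) for every \(x \in M\) and every \(g \in K\), which is precisely the condition that \(M\) be \(\rho\)-stable (the twist by \(g_i\) merely relabels the second tensor factor). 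Since \((I,\rho)\) is simple by \cref{lem:subcntsimple}, so is \((I,\rho_i)\).

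For the implication \((\Rightarrow)\), suppose \(W\) is not simple. By the cosemisimplicity of \(\overline{kG} \cong k(G/K)\) recalled above, we may write \(W = W' \oplus W''\) with both summands non-zero. Because the cotensor functor is left exact and both \(W'\) and \(W''\) are direct summands (equivalently, the cotensor functor preserves direct sums),
\[ W \cotensor^{\overline{kG}} kGe \cong \bigl(W' \cotensor^{\overline{kG}} kGe\bigr) \oplus \bigl(W'' \cotensor^{\overline{kG}} kGe\bigr), \]
and this decomposition is a decomposition of partial \(kG\)-comodules because the partial coaction of \cref{le:cotensorcomodule} acts only on the second tensor factor. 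Finally, each summand is non-zero: a non-trivial \(\overline{kG}\)-subcomodule of \(W\) contains some simple \(W_i\), and \(W_i \cotensor^{\overline{kG}} kGe \cong (I,\rho_i) \neq 0\) since \(e \neq 0\). Hence \(W \cotensor^{\overline{kG}} kGe\) decomposes non-trivially and is not simple.

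The only real technical point is the coefficient-level identification of \(\rho\)- and \(\rho_i\)-subcomodules used in the \((\Leftarrow)\) direction; once this ``relabelling'' is made explicit, the theorem reduces to the already-established \cref{lem:subcntsimple} and standard cotensor bookkeeping.
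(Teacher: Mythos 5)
Your proof is correct and takes essentially the same route as the paper's: the forward direction identifies \(W_i \cotensor^{\overline{kG}} kGe\) with \((I,\rho_i)\), observes that \(\rho_i\)-subcomodules and \(\rho\)-subcomodules of \(I\) coincide, and invokes \cref{lem:subcntsimple}; the converse uses cosemisimplicity of \(k(G/K)\), distributivity of the cotensor product over the resulting direct sum decomposition of \(W\), and the non-vanishing of \(W_i \cotensor^{\overline{kG}} kGe\). The only cosmetic difference is that you argue the converse contrapositively while the paper starts from simplicity of the cotensor product and concludes \(U=0\); the content is identical.
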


\begin{proof}
    We have proved in the foregoing paragraphs that if \(W\) is a simple \(\overline{kG}\)-comodule, then \(W \cotensor^{\overline{kG}} kG e\) is simple as well. 
    To prove the converse, suppose that for a certain \(\overline{kG}\)-comodule $W$, the partial \(kG\)-comodule \(W \cotensor^{\overline{kG}} kG e\) is simple. By cosemisimplicity of \(\overline{kG} \cong k(G/K)\), \(W\) contains a simple (hence 1-dimensional) subcomodule \(kw\), and there exists a subcomodule \(U\) such that \(W\) is the biproduct \(kw \oplus U\). In view of the isomorphisms \eqref{eq:isomodcomod}, we know that \(\PMod^{kG}\) is abelian and, in particular, it has biproducts as well, which are preserved by the forgetful functor to \(k\)-vector spaces. Therefore, 
    \[W \cotensor^{\overline{kG}} kG e \cong \big(kw \oplus U\big) \cotensor^{\overline{kG}} kG e \cong \Big(kw \cotensor^{\overline{kG}} kG e\Big) \oplus \Big(U \cotensor^{\overline{kG}} kG e\Big).\]
    Now, \(kw \cotensor^{\overline{kG}} kG e\) cannot be \(0\): since \(kw\) is a 1-dimensional \(k(G/K)\)-comodule, we have that \(w^{(0)} \otimes w^{(1)} = w \otimes gK\) for some $g\in G$, hence \(0\neq w \otimes ge \in kw \cotensor^{\overline{kG}} kG e\). Thus, \(U \cotensor^{\overline{kG}} kG e = 0\) by simplicity, which entails that \(U = 0\) by repeating the reasoning from the previous sentence.
\end{proof}

Finally, we study when two of these partial comodules are isomorphic.

\begin{theorem}
	\label{th:redundancyI}  
    Let \(e\) and \(e'\) be subcentral idempotents in \(kG\).
	The two simple partial comodules \(M = W \cotensor^{k(G/K)} kGe\) and \(M' = W' \cotensor^{k(G/K')} kGe'\) are isomorphic if and only if \(K = K', W \cong W'\) and there exists a multiplicative character \(\nu\) of \(K\) such that the algebra automorphism
	\begin{equation}
		\label{eq:characteraction}
		\Phi\colon kK \to kK , \quad g \mapsto \nu(g) g
	\end{equation}
	maps \(e\) to \(e'\).
\end{theorem}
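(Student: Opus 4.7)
The plan is to combine an analysis of the support of the partial coaction, an explicit description of any partial comodule morphism derived from the intertwining relation, and the character formula of \cref{le:eformula} expressing $e$ (resp.\ $e'$) in terms of the character $\chi$ (resp.\ $\chi'$) of $I = kKe$ (resp.\ $I' = kK'e'$) as a $K$-representation (resp.\ $K'$-representation).

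For the ``if'' direction, suppose $K = K'$, $W \cong W'$, and $\nu$ satisfies $\Phi_\nu(e) = e'$. Via \cref{prop:Ierhog} and the identification \eqref{eq:WikG}, I can pick a common coset representative $g_i = g_j$ of $g_iK = g_jK'$, so that $M \cong (I,\rho_i)$ and $M' \cong (I',\rho_i)$. Since $\Phi_\nu$ is an algebra automorphism of $kK$ mapping $e$ to $e'$, it restricts to a linear bijection $\phi\colon I = kKe \to I' = kKe'$ given by $\phi(he) = \nu(h)\,he'$. The hypothesis $\Phi_\nu(e) = e'$ is equivalent via \cref{le:eformula} to the pointwise identity $\chi'(h) = \chi(h)\nu(h)^{-1}$ on $K$, and a direct computation with the character expansions of $e$ and $e'$ then verifies that $(\phi \otimes \mathrm{id})\rho_i(he) = \rho_i\phi(he)$ for every $h \in K$.

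For the converse, introduce the support $\mathrm{Supp}(M) := \{g \in G : (\mathrm{id}_M \otimes g^*)\rho_M \neq 0\} \subseteq G$, which is plainly an iso invariant. The identity $\rho_i\bigl(\sum_{g\in K}\lambda_g\,ge\bigr) = \sum_{g\in K}\lambda_g\,ge \otimes g_ig$ shows at once that $\mathrm{Supp}((I,\rho_i)) = g_iK$; hence $K = \mathrm{Supp}(M)^{-1}\cdot\mathrm{Supp}(M)$ and the coset $g_iK = \mathrm{Supp}(M)$ are intrinsic to $M$. An isomorphism $M \cong M'$ therefore forces $K = K'$ and $g_iK = g_jK$, that is $W \cong W'$. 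The coset-change trick from the proof of \cref{th:groupcase_simple} (left multiplication by $k^{-1}$ when $g_j = g_ik$) then allows us to assume $g_i = g_j$. Given an iso $\phi\colon (I,\rho_i) \to (I',\rho_i)$, applying the intertwining relation to a generator $he \in I$ and equating the coefficients of $g_il$ for each $l \in K$ (using the character expansion $he = \frac{1}{|K|}\sum_l \chi(l^{-1}h)\,l$) yields $\frac{\chi(l^{-1}h)}{|K|}\,\phi(le) = \mu^{(he)}_l\,le'$, where $\mu^{(he)}_l$ denotes the coefficient of $l$ in $\phi(he) \in kK$. Setting $l = h$ forces $\phi(le) = \alpha_l\,le'$ for scalars $\alpha_l \in k$, and comparing with the character expansion of $\alpha_h\,he'$ produces the key relation
\[ \alpha_h\,\chi'(l^{-1}h) \;=\; \alpha_l\,\chi(l^{-1}h) \qquad \text{for all } l,h \in K. \]

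The last and most delicate step is to promote $l \mapsto \alpha_l$ to a multiplicative character of $K$. Injectivity of $\phi$ forces $\alpha_l \neq 0$ for every $l$, and the relation above then yields $\chi(k) = 0 \iff \chi'(k) = 0$ together with $\alpha_{hk} = \alpha_h\cdot\chi(k)/\chi'(k)$ for every $h \in K$ and every $k$ in $S := \{k \in K : \chi(k) \neq 0\}$. Rescaling $\phi$ so that $\alpha_1 = 1$, the rule $\nu(k) := \chi(k)/\chi'(k)$ defines a function on $S$ satisfying $\alpha_{hk} = \alpha_h\nu(k)$; in particular $\nu$ is multiplicative on products of elements of $S$. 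By \cref{le:eformula} the support of $e$ is exactly $S$, and since $K$ is by construction the subgroup generated by $\mathrm{supp}(e)$, we have $\langle S\rangle = K$, so $\nu$ extends uniquely to a group homomorphism $\nu\colon K \to k^\times$ with $\alpha_h = \nu(h)$ for all $h$. Translating the resulting identity $\chi'(h) = \chi(h)\nu(h)^{-1}$ (which holds on $S$ and is trivial on $K \setminus S$) back through \cref{le:eformula} gives $\Phi_\nu(e) = e'$. The main obstacle is precisely this extension step: it hinges on $K$ being \emph{exactly} the subgroup generated by $\mathrm{supp}(e)$, for in a strictly larger ambient group the set $S$ could fail to generate it and $\nu$ would only be pinned down on a proper subgroup.
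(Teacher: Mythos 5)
Your argument follows essentially the same route as the paper's proof: an isomorphism invariant (your $\mathrm{Supp}(M)$, the paper's subspace $\{(\varphi \otimes kG)\rho_i(x) \mid x \in I,\ \varphi \in kG^*\} = k(g_iK)$) pins down $K$ and the coset $g_iK$, a translation reduces both sides to a common twist, the intertwining relation forces $\phi(le) = \alpha_l\, le'$, and multiplicativity of $l \mapsto \alpha_l$ is obtained on a generating subset and then extended to all of $K$; the only structural difference is your systematic use of \cref{le:eformula} to phrase everything through the character $\chi$. That choice creates one step that can actually fail under the section's standing hypothesis $\mathrm{char}(k) \nmid |G|$: to deduce $\phi(he) = \alpha_h\, he'$ you set $l = h$ and divide by $p_h(he) = \chi(1)/|K| = \dim_k(I)/|K|$, but $\dim_k(I)$ need not be invertible in $k$ when $\mathrm{char}(k) > 0$ (e.g.\ for $K \cong \mathbb{Z}/3\mathbb{Z} \leq G$, $\mathrm{char}(k) = 2$ and $e$ the sum of the two nontrivial primitive idempotents of $kK$, one has $\dim_k I = 2 = 0$ in $k$, so $p_h(he) = 0$ and the equation gives no information). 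The paper sidesteps this by choosing, for each $l \in K$, \emph{some} $x_l \in I$ with $p_l(x_l) \neq 0$ --- always possible since $p_l(he) = p_{h^{-1}l}(e)$ and $e \neq 0$ --- rather than insisting on $x = le$; with that substitution your derivation of $\alpha_l\chi(l^{-1}h) = \alpha_h\chi'(l^{-1}h)$ and everything after it goes through unchanged. Two cosmetic remarks: your set $S = \{k : \chi(k) \neq 0\}$ is the \emph{inverse} of $\mathrm{supp}(e)$ rather than $\mathrm{supp}(e)$ itself (harmless, as both generate $K$), and you should state explicitly that simplicity of $M, M'$ forces $W, W'$ to be $1$-dimensional before invoking the identification \eqref{eq:WikG}.
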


\begin{proof}
    The quotient coalgebra \(k(G/K)\) is cosemisimple because it has a basis of grouplikes, and the cotensor product \(W \cotensor^{k(G/K)} kG e\) is never zero (see \eqref{eq:WikG}). Furthermore, \(W\) and \(W'\) need to be \(1\)-dimensional: if not, then \(M\) and \(M'\) would not be simple. 
    So \(W \cotensor^{\bar{H}} He \cong kKe =: I\) as vector spaces and by \eqref{eq:Irhoi}, \(W \cotensor^{\bar{H}} He \cong (I, \rho_i)\) as partial comodules for some \(g_i\), where \(\rho_i\) was defined in \eqref{eq:Irhoi}. Similarly \(W' \cotensor^{\bar{H'}} He' \cong (I', \rho_j)\), where \(\rho_j(h'e') = h'e'_{(1)}e' \otimes g_jh'e'_{(2)}\) for some \(g_j\in G\) and for all \(h'\in K'\). Remember from \cref{lem:subcntsimple} that we denote by \(J\subseteq G\) the set of \(g \in G\) appearing in the expression of \(e\) with non-zero coefficient and by \(K\) the subgroup of \(G\) generated by \(J\), so that \(e = \sum_{g \in J} \alpha_g \, g\). Then for any \(h \in K\)
    \[\rho_i(he) = he_{(1)} e \otimes g_ih e_{(2)} = \sum_{g, g' \in J} \alpha_g \alpha_{g'} hgg' \otimes g_i hg\]
    and so
    \[(p_{hg^2} \otimes kG)\big(\rho_i(he)\big) = \alpha_g^2 g_i hg,\]
    where \(p_g\colon kG \to k\) is the dual basis element corresponding to \(g \in G\).
    This entails that
    \[\left\{(\varphi \otimes kG)\big( \rho_i(x)\big) \mid x \in I, \varphi \in kG^*\right\} = k (g_i K).\]
    This space is invariant under isomorphism of partial comodules. Therefore, \(k(g_iK) = k(g_jK')\) as subspaces of the coalgebra \(kG\) and hence \(g_iK = g_jK'\), from which we conclude that \(K = K'\). Moreover, \(W \cong W'\) because both are \(k\) with the \(1\)-dimensional comodule structure coming from the grouplike \(g_i K \in G/K\). 
    We are left to determine the character \(\nu\colon K \to k\). To this aim, denote by \(f \colon (I,\rho_i) \to (I',\rho_j)\) the isomorphism of partial comodules existing by hypothesis. From \(g_iK = g_jK\) it follows that \(I'= kK'e' = kKe'\) with the partial coaction \(\rho_j(x) = x_{(1)}e'\otimes g_jx_{(2)}\) as above is isomorphic to \(I'\) itself with the partial coaction \(\rho_i(x) = x_{(1)}e' \otimes g_ix_{(2)}\) via the automorphism \(f_{i,j}\colon I'\to I', x \mapsto g_j^{-1}g_ix\). In fact,
    \[\rho_j\big(f_{i,j}(x)\big) = \rho_j\big(g_j^{-1}g_ix\big) = g_j^{-1}g_ix_{(1)}e' \otimes g_ix_{(2)} = f_{i,j}(x_{(1)}e') \otimes g_ix_{(2)} = \big(f_{i,j} \otimes kG\big)\big(\rho_i(x)\big).\]
    By composing the two isomorphisms \((I,\rho_i) \xrightarrow{f} (I',\rho_j) \xrightarrow{f_{i,j}} (I',\rho_i)\), we obtain an isomorphism \(\varphi  \colon (I,\rho_i) \to (I',\rho_i)\) of partial comodules. The following computation:
    \begin{align*}
    \big(\varphi  \otimes kG\big)\big(\rho_1(x)\big) & = \big(\varphi  \otimes kG\big)\big((1_k \otimes g_i^{-1})\cdot\rho_i(x)\big) = (1_k \otimes g_i^{-1})\cdot\big(\varphi  \otimes kG\big)\big(\rho_i(x)\big) \\
    & = (1_k \otimes g_i^{-1})\cdot\rho_i\big(\varphi (x)\big) = \rho_1\big(\varphi (x)\big),
    \end{align*}
    shows that the partial comodules \((I, \rho_1)\) and \((I', \rho_1)\) are isomorphic, too, via the very same \(\varphi \). 
	Thus, let \(\varphi \colon I \to I'\) be the resulting isomorphism of partial comodules and let \(x \in I\) be arbitrary. Then
	\begin{equation}
		\label{eq:comodulemor}
		\varphi(x_{(1)}e) \otimes x_{(2)} =  \varphi(x)_{(1)} e' \otimes \varphi(x)_{(2)}.
	\end{equation}
	If \(x = \sum_{g \in K} p_g(x) g\) and \(\varphi(x) = \sum_{g \in K} p_g(\varphi(x)) g\), (\ref{eq:comodulemor}) implies that for all \(g\)
	\begin{equation}
		\label{eq:phieg1}
		p_g(x) \varphi(ge) = p_g(\varphi(x)) ge'.
	\end{equation}
	Remark that both \(ge\) and \(ge'\) are non zero and that \(\varphi(ge) \neq 0\) because \(\varphi\) is an isomorphism. It follows that \(p_g(x) = 0\) if and only if \(p_g(\varphi(x)) = 0\).
	Since for any \(g \in K,\) there is an \(x_g \in I\) with \(p_g(x_g) \neq 0,\) we can conclude that for all \(g \in K\)
	\begin{equation}
		\label{eq:phieg2}
	\varphi(ge) = \nu_g\, ge'
	\end{equation}
	where \(\nu_g \coloneqq p_g(\varphi(x_g)) / p_g(x_g) \in k\). From \eqref{eq:phieg1} and \eqref{eq:phieg2} we have then
	\begin{equation}
		\label{eq:nu}
		p_g(\varphi(x)) = \nu_g\, p_g(x)
	\end{equation}
 	for all \(x \in I\) and all \(g \in K\).	
	Without loss of generality, we may suppose that \(\nu_1 = 1\): indeed, it suffices to replace \(\varphi\) with \(\nu_1^{-1} \varphi\), since \(\nu_1 \neq 0\) because \(\varphi\) is injective. Thus, \(\varphi(e) = e'\) and if we write again \(e = \sum_{g \in J} \alpha_g \, g\), then \(e' = \sum_{g \in J} \nu_g \alpha_g \, g\) by (\ref{eq:nu}). Taking \(x = he\) for \(h \in K\) arbitrary, the left-hand side of \eqref{eq:comodulemor} becomes
	\[\sum_{g \in J} \alpha_g \, \varphi(hge) \otimes hg = \sum_{g \in J} \alpha_g \nu_{hg} \, hge' \otimes hg\]
	and the right-hand side (using \eqref{eq:phieg2})
	\[\nu_h (he')_{(1)} e' \otimes (he')_{(2)} = \sum_{g \in J} \nu_h \nu_g \alpha_g \, hge' \otimes hg.\]
	Hence for all \(h \in K\) and \(g \in J\)
	\[\alpha_g \nu_{hg} = \alpha_g \nu_h \nu_g\]
	and so, since \(\alpha_g \neq 0\) for \(g \in J\), \(\nu_{hg} = \nu_h \nu_g\). But \(J\)
	generates \(K\) and in this way it is easy to see that \(\nu_{gh} = \nu_g \nu_h\) for all \(g, h \in K\). We have shown that \(\nu\colon K \to k^\times, g \mapsto \nu_g,\) defines a one-dimensional representation of \(K\) and so \eqref{eq:characteraction} is a well-defined algebra morphism which moreover maps \(e\) to \(e'\).
	
	Conversely, let \(\nu\) be a one-dimensional representation of \(K\) such that
	\[\Phi \colon kK \to kK : g \mapsto \nu(g) g\]
	maps \(e\) to \(e'\). Restricting gives a bijection \(\varphi \colon I \to I'\) (its inverse is the restriction of \(kK \to kK : g \mapsto \nu(g)^{-1} g\)). It is a morphism of partial comodules because for any \(h \in K,\)
	\begin{align*}
 \varphi(he_{(1)}e) \otimes he_{(2)} & = \sum_{g \in J}\alpha_g\Phi(hge) \otimes hg = \sum_{g \in J}\alpha_g\Phi(hg)\Phi(e) \otimes hg = \sum_{g \in J}\alpha_g\nu(hg) hg e' \otimes hg \\
 &=  \varphi(he)_{(1)} e'  \otimes \varphi(he)_{(2)}.
 \end{align*}
 If moreover \(K = K'\) and \(W \cong W'\), then clearly \(M \cong M'\) as partial comodules.	
\end{proof}

\begin{remark}
\label{rk:actioncharacter}
	The set of \(1\)-dimensional representations of \(K\) forms an abelian group which is isomorphic to \(\widehat{K^{\mathrm{ab}}},\) the Pontryagin dual of the abelianisation of \(K\). Indeed, given a 1-dimensional representation \(\nu : K \to k,\) we have \([K, K] \subseteq \ker \nu,\) so \(\nu\) induces a representation
	\[\overline{\nu} : K^{\mathrm{ab}} \to k.\]
	In particular there are exactly \([K : [K, K]] = |\widehat{K^{\mathrm{ab}}}|\) \(1\)-dimensional \(K\)-representations.

    Now \(\widehat{K^{\mathrm{ab}}}\) acts on the set of central idempotents of \(kK\) by \(\nu \cdot e = \Phi_\nu(e)\) where \(\Phi_\nu\) is defined as in \eqref{eq:characteraction}. Recall from \cref{rk:comodulealgebra} that a subcentral idempotent \(e\) in \(kG\) gives rise to a partial comodule algebra \(Ae\). \cref{th:redundancyI} says that subcentral idempotents \(e\) and \(e'\) produce isomorphic partial comodule algebras if and only if they generate the same right coideal subalgebra \(kK,\) and are in the same orbit under the action of \(\widehat{K^{\mathrm{ab}}}\) just described. 
\end{remark}

We have proven in \cref{th:1D} that all 1-dimensional partial comodules over an arbitrary Hopf algebra are obtained from our general construction. In case of a group algebra, we can combine this with the results from this section, to arrive at the following classification of 1-dimensional partial comodules over a group algebra:

\begin{corollary}
    Let \(G\) be a finite group. Then every 1-dimensional partial \(kG\)-comodule is of the form \eqref{eq:partial_integral} or \eqref{eq:partial_integral_trans} as described in \cref{ex:partial_integral}.
\end{corollary}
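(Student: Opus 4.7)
The plan is to combine \cref{th:1D} with the more specific analysis carried out in \cref{se:groups} for group algebras, and then use \cref{th:redundancyI} to eliminate the remaining freedom.

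First, I would apply \cref{th:1D} (which is available because the antipode of $kG$ is always invertible) to conclude that any $1$-dimensional partial $kG$-comodule is of the form $W \cotensor^{\overline{kG}_e} kG\,e$ for some subcentral idempotent $e \in kG$ and some $1$-dimensional $\overline{kG}_e$-comodule $W$. Next I would pin down what a subcentral idempotent in $kG$ looks like: since every right coideal subalgebra of $kG$ is a Hopf subalgebra $kK$ for a subgroup $K \leq G$, the idempotent $e$ sits in $kK$ and is central there (where $K$ is the subgroup generated by the support of $e$, as in \cref{lem:subcntsimple}). In particular, $\overline{kG}_e \cong k(G/K)$, its grouplike elements are precisely the cosets $g_i K$, and these parametrize all $1$-dimensional $\overline{kG}_e$-comodules $W_i$.

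I would then identify the resulting partial comodule as $(I,\rho_i) = (kKe,\rho_i)$ with $\rho_i(x) = x_{(1)}e \otimes g_i x_{(2)}$, via the isomorphism \eqref{eq:WikG} coming from \cref{prop:Ierhog}. The assumption that the partial comodule is $1$-dimensional forces $\dim (kKe) = 1$, so $e$ must be a primitive central idempotent of $kK$ associated to a \emph{one-dimensional} irreducible representation of $K$, i.e.\ a multiplicative character $\chi \colon K \to k^\times$. By \cref{le:eformula}, this means
\[
e \;=\; \frac{1}{|K|} \sum_{h \in K} \chi(h^{-1})\, h.
\]

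The key reduction is now \cref{th:redundancyI}: taking $\nu = \chi^{-1}$, the automorphism $\Phi_{\nu}(g) = \chi^{-1}(g)\,g$ of $kK$ sends $e' \coloneqq \frac{1}{|K|}\sum_{h \in K} h$ to $e$, so $(I,\rho_i)$ is isomorphic, as a partial comodule, to $(kKe',\rho_i)$ for the same coset representative $g_i$. Since $kKe' \cong k$ via $e' \mapsto 1$, the partial coaction becomes
\[
1 \;\longmapsto\; 1 \otimes g_i e' \;=\; 1 \otimes \frac{1}{|K|}\sum_{h \in K} g_i h,
\]
which is exactly \eqref{eq:partial_integral} when $g_i \in K$ and \eqref{eq:partial_integral_trans} otherwise, completing the argument. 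There is no real obstacle here; the only point requiring a small computation is the choice $\nu = \chi^{-1}$ in the redundancy theorem, all other steps being direct invocations of the results above.
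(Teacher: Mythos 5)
Your proposal is correct and follows essentially the same route as the paper's own proof: invoke \cref{th:1D}, use \cref{le:eformula} to identify the relevant subcentral idempotents with multiplicative characters of $K$, and apply \cref{th:redundancyI} (the paper phrases this as transitivity of the $\widehat{K^{\mathrm{ab}}}$-action on characters, which is exactly your explicit choice $\nu = \chi^{-1}$) to reduce to the integral idempotent and its shifts.
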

\begin{proof}
By \cref{th:1D}, all \(1\)-dimensional partial \(kG\)-comodules can be constructed using \cref{construction:general2}, so they are of the form \eqref{eq:Irhoi}. As can be seen from \cref{le:eformula}, there is a bijective correspondence between central idempotents \(e\) of \(kK\) such that \(\dim (kKe) = 1\) and \(1\)-dimensional representations of \(K\). Under this correspondence, the action of \(\widehat{K^{\mathrm{ab}}}\) as described in \cref{rk:actioncharacter} corresponds to the following transitive action of \(\widehat{K^{\mathrm{ab}}}\) on itself:
    \[\widehat{K^{\mathrm{ab}}} \times \widehat{K^{\mathrm{ab}}} \to \widehat{K^{\mathrm{ab}}} : (\nu, \chi) \mapsto \nu^{-1} \chi.\]
    Hence all central idempotents of \(kK\) such that \(\dim (kKe) = 1\) are in the same orbit, and give rise to isomorphic partial comodule algebras by \cref{th:redundancyI}. As a consequence, the only \(1\)-dimensional partial \(kG\)-comodules are indeed the ones coming from integrals of subgroups of \(G\) \eqref{eq:partial_integral} and their shifted versions \eqref{eq:partial_integral_trans}.
\end{proof}

Let us summarize the results of this section: subcentral idempotents \(e\) of \(kG\) are those idempotents such that \(ee_{(1)} \otimes e_{(2)} = e_{(1)}e \otimes e_{(2)}\) or, otherwise stated, the central idempotents of group algebras of subgroups \(K\leq G\) such that \(e \notin kN\) for any proper subgroup \(N < K\) (cf. \cref{le:ecentral}). For any such subcentral idempotent \(e\), the ideal \(I = kKe\) is a simple partial \(kG\)-comodule (by \cref{th:groupcase_simple}). Two of these partial comodules are isomorphic if and only if there is a multiplicative character \(\nu\) of \(K\) such that the algebra automorphism \(kK \to kK : g \mapsto \nu(g) g\) maps \(e\) to \(e'\).
These partial coactions can be shifted (similar to usual coactions of $kG$ which correspond to gradings of the group $G$). This can be done in \([G : K]\) non-isomorphic ways: \(I \to I \otimes kG : x \mapsto x_{(1)}e \otimes x_{(2)}\) can be shifted to \(I \to I \otimes kG : x \mapsto x_{(1)}e \otimes g_ix_{(2)}\) for each representative \(g_i\) of a \(K\)-coset in \(G\) (cf. \eqref{eq:Irhoi}). In conclusion, the subcentral idempotent \(e\) gives rise to \([G : K]\) simple partial comodules of dimension \(\dim (I)\).

\subsection{Examples} We will end this section with some examples and remarks that support \cref{conj:complete} and \cref{conj:Apar}.

\begin{example}
    Let's apply the construction on \(S_3 = \{1, \sigma, \sigma^2, \alpha, \sigma \alpha, \sigma^2 \alpha\}\), where \(\sigma^3 = \alpha^2 = 1, \alpha \sigma \alpha = \sigma^2\).
    Let \(k\) be a field of characteristic different from \(2\) and \(3\) that contains a primitive cubic root of unity \(\zeta\). The simple partial comodules of \(kS_3\) obtained from the construction are summarized in \cref{tab:S3}.

    \begin{table}
        \begin{tabular}{ccccc}
            Subgroup \(K\) & Subcentral idempotent \(e\) & Equivalent idempotents & \(\dim I\) & \([G : K]\) \\ \hline
            \(\{1\}\) & \(1\) & & \(1\) & \(6\) \\
            \(\{1, \alpha\}\) & \(\frac{1 + \alpha}{2}\) & \(\frac{1 - \alpha}{2}\) & \(1\) & \(3\) \\
            \(\{1, \sigma\alpha\}\) & \(\frac{1 + \sigma\alpha}{2}\) & \(\frac{1 - \sigma\alpha}{2}\) & \(1\) & \(3\) \\
            \(\{1, \sigma^2\alpha\}\) & \(\frac{1 + \sigma^2\alpha}{2}\) & \(\frac{1 - \sigma^2\alpha}{2}\) & \(1\) & \(3\) \\
            \(\{1, \sigma, \sigma^2\}\) & \(\frac{1 + \sigma + \sigma^2}{3}\) & \(\frac{1 + \zeta \sigma + \zeta^2\sigma^2}{3}\), \(\frac{1 + \zeta^2\sigma + \zeta\sigma^2}{3}\) & \(1\) & \(2\) \\
            \(\{1, \sigma, \sigma^2\}\) & \(\frac{2 - \sigma - \sigma^2}{3}\) & \(\frac{2 - \zeta \sigma - \zeta^2\sigma^2}{3}\), \(\frac{2 - \zeta^2\sigma - \zeta\sigma^2}{3}\) & \(2\) & \(2\) \\
            \(S_3\) & \(\frac{1 + \sigma + \sigma^2 + \alpha + \sigma \alpha + \sigma^2 \alpha}{6}\) & \(\frac{1 + \sigma + \sigma^2 - \alpha - \sigma \alpha - \sigma^2 \alpha}{6}\) & 1 & 1 \\
            \(S_3\) & \(\frac{5 - \sigma - \sigma^2 - \alpha - \sigma \alpha - \sigma^2 \alpha}{6}\) & \(\frac{5 - \sigma - \sigma^2 + \alpha + \sigma \alpha + \sigma^2 \alpha}{6}\) & 5 & 1
        \end{tabular}
        \caption{Summary of the simple partial comodules of \(kS_3\).}
        \label{tab:S3}
    \end{table}

    We have \(18\) one-dimensional partial comodules, \(2\) two-dimensional and \(1\) five-dimensional. It has been verified by computer (using SageMath) that the dimension of \((kS_3^*)_{par}\) is indeed equal to \(51 = 18 \cdot 1^2 + 2 \cdot 2^2 + 1 \cdot 5^2\), so this proves that our construction has found all simple partial comodules of \(S_3\) and that
    \[(kS_3^*)_{par} \cong k^{18} \times M_2(k)^2 \times M_5(k).\]
    In particular, \((kS_3^*)_{par}\) is semisimple. We also found by computer that 
    \[\dim A_{par}(kS_3^*) = 13,\]
    which is in agreement with \cref{conj:Apar}. Indeed, for \(e = \frac{2 - \sigma - \sigma^2}{3}\), the algebra \(Ae\) is isomorphic to \(k \times k\) and for \(e = \frac{5 - \sigma - \sigma^2 - \alpha - \sigma \alpha - \sigma^2 \alpha}{6}\), we have \(Ae \cong k \times M_2(k),\) so
    \[A_{par}(kS_3^*) \cong k^9 \times M_2(k). \qedhere\]
    \end{example}
    \begin{example}
Suppose that \(\mathrm{char}(k) \neq 2\) and that \(k\) containes a primitive fourth root of unity. 
   We calculated by computer that 
    \[\dim (kD_8^*)_{par} = \dim (kQ_8^*)_{par} = 180,\]
    where \(D_8\) is the dihedral group of \(8\) elements and \(Q_8\) is the quaternion group.
    By drawing a table as in the previous example and comparing with these dimensions, one shows that every simple partial comodule of \(D_8\) and \(Q_8\) is obtained by \cref{construction:general2}, and that
    \begin{align*}
        (kD_8^*)_{par} &\cong k^{35} \times M_2(k)^2 \times M_3(k)^7 \times M_5(k) \times M_7(k), \\
        (kQ_8^*)_{par} &\cong k^{19} \times M_2(k)^6 \times M_3(k)^7 \times M_5(k) \times M_7(k).
    \end{align*}
    For the subalgebra \(A_{par}\) we have
    \[A_{par}(kD_8^*) \cong A_{par}(kQ_8^*) \cong k^{28} \times M_2(k)^2,\]
    as can be predicted using \cref{conj:Apar}.
\end{example}

\begin{example}
    Let \(G\) be a finite abelian group and \(k\) an algebraically closed field such that its characteristic does not divide \(|G|\). Then \(kG \cong kG^*\) as Hopf algebras, and at the end of  \cref{se:construction} we showed that \cref{construction:general2} constructs every simple partial \(kG^*\)-comodule. Since \(kG_{par}\) is semisimple by \cite{DEP}, \eqref{eq:theta} is an isomorphism in this case.
\end{example}

\section{A non-commutative and non-cocommutative example: the Kac-Paljutkin algebra \texorpdfstring{\(\mathcal{A}\)}{A}}
\label{se:kac}

Let \(k\) be a field of characteristic different from \(2\) that contains an eighth root of unity \(\zeta\).

In this section we look at a more exotic example: the Kac-Paljutkin algebra \(\mathcal{A}\), which is the unique semisimple \(8\)-dimensional non-commutative non-cocommutative Hopf algebra over \(k\).
This algebra is described in \cite{Masuoka} and is studied in \cite{Shi}. It is generated as an algebra by \(x, y, z\) where 
	\begin{gather*}
		x^2 = y^2 = 1, \quad z^2 = \frac{1}{2}(1 + x + y - xy), \\ xy = yx, \quad xz = zy, \quad yz = zx.
	\end{gather*}
and the Hopf algebra structure is given by
\begin{align*}
	\Delta(x) &= x \otimes x, & \epsilon(x) &= 1, & S(x) &= x;\\
	\Delta(y) &= y \otimes y, & \epsilon(y) &= 1, & S(y) &= y;\\
	\Delta(z) &= \frac{1}{2}(1 \otimes 1 + 1 \otimes x + y \otimes 1 - y \otimes x)(z \otimes z), & \epsilon(z) &= 1, & S(z) &= z.
\end{align*}
A linear basis for \(\mathcal{A}\) is \(\{1, x, y, z, xy, xz, yz, xyz\}\) and \(t = \frac{1}{8}(1 + x+ y + xy + z + xz + yz + xyz)\) is a normalized integral. The group of grouplikes \(\{1, x, y, xy\}\) is isomorphic to the Klein four group. This way, we get immediately the following right coideal subalgebras (which are in fact Hopf subalgebras):
\[\langle 1 \rangle, \langle 1, x \rangle, \langle 1, y \rangle, \langle 1, xy \rangle, \langle 1, x, y, xy \rangle, \mathcal{A}.\]
There are exactly two more right coideal subalgebras, namely
\begin{align}
    S_1 &= \left\langle 1, xy, \frac{1-\zeta^2}{2}z + \frac{1 + \zeta^2}{2}xz, \frac{1 + \zeta^2}{2}yz + \frac{1 - \zeta^2}{2}xyz \right\rangle = \langle 1, xy, s, xys \rangle, \\
    S_2 &= \left\langle 1, xy, \frac{1 + \zeta^2}{2}z + \frac{1 - \zeta^2}{2}xz, \frac{1 - \zeta^2}{2}yz + \frac{1 + \zeta^2}{2}xyz \right\rangle = \langle 1, xy, \bar{s}, xy \bar{s} \rangle.
\end{align}
where \(s = \frac{1 - \zeta^2}{2}z + \frac{1 + \zeta^2}{2}xz\) and \(\bar{s} = \frac{1 + \zeta^2}{2}z + \frac{1 - \zeta^2}{2}xz\)

 The Kac-Paljutkin algebra \(\mathcal{A}\) is a self-dual Hopf algebra and is isomorphic to \(k^4 \times M_2(k)\). So as a coalgebra, \(\mathcal{A}\) is the direct sum of four copies of \(k\) and one \(2 \times 2\) comatrix coalgebra.

The subcentral idempotents \(e \in \mathcal{A}\) are those idempotents that are central in the right coideal subalgebra they generate. 
The right coideal subalgebra \(\langle 1 \rangle\) contains just the subcentral idempotent \(1\) and obviously in this case \(\bar{\mathcal{A}} = \mathcal{A}\). By taking for \(W\) the different simple \(\mathcal{A}\)-comodules, we find exactly the \(4\) one-dimensional and \(1\) two-dimensional simple global \(\mathcal{A}\)-comodules because \(W \cotensor^{\mathcal{A}} \mathcal{A} \cong W\).

\begin{table}[t]
    \begin{tabular}{ccccc}
        \(A\) & \(e\)  & \(\dim Ae\) & \makecell{\((\text{number} \cdot \text{dim}^2)\) \\ of partial comodules} \\ \hline
        \(\langle 1 \rangle\) & \(1\) & \(1\) & \(4 \cdot 1^2 + 1 \cdot 2^2\) \\
        \(\langle 1, x \rangle\) & \(\frac{1 + x}{2}\) & \(1\) & \(4 \cdot 1^2\) \\
        \(\langle 1, y \rangle\) & \(\frac{1 + y}{2}\) & \(1\) & \(4 \cdot 1^2\) \\
        \(\langle 1, xy \rangle\) & \(\frac{1 + xy}{2}\) & \(1\) & \(4 \cdot 1^2\) \\
        \(\langle 1, x, y, xy \rangle\) & \(\frac{1 + x+ y + xy}{4}\) & \(1\) & \(2 \cdot 1^2\) \\
        \(\langle 1, x, y, xy \rangle\) & \(\frac{3 - x - y - xy}{4}\) & \(3\) & \(2 \cdot 3^2\) \\
        \(S_1\) & \(\frac{1 + xy + s + xys}{4}\) & \(1\) & \(2 \cdot 1^2\) \\
        \(S_1\) & \(\frac{2 + (1 + \zeta) s + (1 - \zeta) xys}{4}\) & \(2\) & \(2 \cdot 2^2\) \\
        \(S_1\) & \(\frac{3 - xy - s - xys}{4}\) & \(3\) & \(2 \cdot 3^2\) \\
        \(S_2\) & \(\frac{1 + xy + \bar{s} + xy\bar{s}}{4}\) & \(1\) & \(2 \cdot 1^2\) \\
        \(S_2\) & \(\frac{2 + (1 - \zeta) \bar{s} + (1 + \zeta) xy\bar{s}}{4}\) & \(2\) & \(2 \cdot 2^2\) \\
        \(S_2\) & \(\frac{3 - xy - \bar{s} - xy\bar{s}}{4}\) & \(3\) & \(2 \cdot 3^2\) \\
        \(\mathcal{A}\) & \(\frac{1 + x + y + xy + z + xz + yz + xyz}{8}\) & \(1\) & \(1 \cdot 1^2\) \\
        \(\mathcal{A}\) & \(\frac{3 - x - y + 3 xy + z + xz + yz + xyz}{8}\) & \(3\) & \(1 \cdot 3^2\) \\
        \(\mathcal{A}\) & \(\frac{5 + x + y - 3 xy + z + xz + yz + xyz}{8}\) & \(5\) & \( 1 \cdot 5^2\) \\
        \(\mathcal{A}\) & \(\frac{7 - x - y - xy - z - xz - yz - xyz}{8}\) & \(7\) & \(1 \cdot 7^2\) \\
        &&Total&180
    \end{tabular}
    
    \caption{Summary of the simple partial comodules of \(\mathcal{A}\).}
     \label{tab:kac}
\end{table}

For every other choice of right coideal subalgebra, the quotient coalgebra \(\bar{\mathcal{A}}\) has a basis of grouplikes. The partial \(\mathcal{A}\)-comodules coming from \cref{construction:general2} (taking \(W\) a simple \(\bar{\mathcal{A}}\)-comodule) are summarized in \cref{tab:kac}. Equivalent subcentral idempotents (i.\,e.\ idempotents that produce isomorphic partial comodules via the construction) have been omitted. Each of the partial comodules obtained this way is simple. If the subcentral idempotent is a linear combination of grouplikes, this follows from the argument given in \cref{th:groupcase_simple}. In the remaining cases it can be verified explicitly. 

A computer check showed that indeed
\[\dim \mathcal{A}_{par} = 180,\]
so this means that every simple partial \(\mathcal{A}\)-comodule is obtained by the construction and that
\[\mathcal{A}_{par} \cong k^{23} \times M_2(k)^5 \times M_3(k)^7 \times M_5(k) \times M_7(k).\]

Moreover \(\dim A_{par}(\mathcal{A}) = 36\) and
\[A_{par}(\mathcal{A}) \cong k^{28} \times M_2(k)^2.\]

\subsection*{Acknowledgements}
JV would like to thank the FWB (F\'ed\'eration Wallonie-Bruxelles) for support through the ARC project ``From algebra to combinatorics, and back'' and the ``Fonds Thelam'' for support through the project ``Partial symmetries of Non-commutative spaces''.

\end{document}